\newtheorem{theorem}{Theorem}[section]
\newtheorem{proof}{Proof}[section]
\newtheorem{lemma}[theorem]{Lemma}
\newtheorem{corollary}[theorem]{Corollary}
\newtheorem{assumption}[theorem]{Assumption}
\newcommand{\mc}{\color{red} FER}  
\newcommand{\cm}{ \color{black}} 
\newcommand{\cutmc}[1]{\mc [cut] \cm} 
\newcommand{\ac}{\color{red} }  
\newcommand{\ca}{ \color{black}} 
\newcommand{\cutac}[1]{\ac [cut] \ca} 
\newcommand{\N}{\mathbb{N}}
\newcommand{\R}{\mathbb{R}}
\newcommand{\Z}{\mathbb{Z}}
\newcommand{\Q}{\mathbb{Q}}
\newcommand{\Y}{\mathcal{Y}}
\renewcommand{\H}{\mathcal{H}}
\newcommand{\F}{\mathcal{F}}
\title{Posterior distribution existence and error control in Banach spaces
in the Bayesian approach to UQ in inverse probelms}
\author{J. Andr\'es Christen\footnotemark[2]\ \footnotemark[3] \and
Marcos A. Capistr\'an\footnotemark[2] \and
María Luisa Daza-Torres\footnotemark[2] \and
Hugo Flores-Arg\"uedas\footnotemark[2] \and
J. Cricelio Montesinos-L\'opez\footnotemark[2]
}
\date{10OCT2018}
\begin{document}

\maketitle

\begin{abstract}
We generalize the results of \cite{Capistran2016} on expected Bayes factors (BF) to control the numerical error in the posterior distribution to an infinite dimensional setting when considering Banach functional spaces and now in a prior setting. 
The main result is a bound on
the absolute global error to be tolerated by the Forward Map numerical solver, to keep the BF
of the numerical vs. the theoretical model near to 1, now in this more general setting, possibly including
a truncated, finite dimensional approximate prior measure.  In so doing we found a far more general setting
to define and prove existence of the infinite dimensional posterior distribution than that depicted in, for example, \cite{Stuart2010}.  Discretization  consistency and rates of convergence are also investigated in this general setting for the Bayesian inverse problem.  
\end{abstract}

KEYWORDS: Inverse Problems, Bayesian Inference, Bayes factors,
Numerical Analysis of ODE's and PDE's,  Disintegration, Weak Convergence, Total Variation.

\newpage

\section{Introduction}\label{sec:intro}

Bayesian UQ in a nutshell is Bayesian inference on a (possibly infinite dimensional) parameter
$\theta$ with data $y_i$ such that, for example,
$$
y_i = \F(\theta) + \epsilon_i; ~\epsilon_i \sim N( 0, \sigma^2) .
$$
The regressor $\F(\theta)$, or Forward Map (FM), is commonly a complex non-linear map arising from unknown parameters
in a system of ODEs or PDEs.  Then to evaluate $\F(\theta)$ we require to solve a system of (O,P)DEs.
Not only that, but this commonly involves a numerical solution with some error $\F^{\alpha(n)}(\theta)$, which is the actual
regressor we can work with in our computer.  A prior $\pi(\cdot)$ is stated for $\theta$ and a \textit{numerical} posterior
distribution is obtained.  $\alpha(n)$ represents a discretization used to approximate the FM and as $n$
increases the discretization becomes finer and the approximation becomes tighter.

In this paper we are concerned with the numerical error induced in this posterior
in comparison to the theoretical posterior (when considering the exact theoretical FM $\F(\theta)$) and also on the error introduced in the numerical posterior when using a \textit{truncated}, finite dimensional prior $\pi_k$.
Moreover, we dicuss practical guidelines to choose the numerical discretization
refinement $n$ and the priori truncation $k$ in order to have correct posterior numerical error control.
We consider a general, not necessarily Gaussian, model for the data $y_i$s. 

\cite{Capistran2016} discuss the latter and this paper generalizes their results to
functional spaces, including a discretization/truncation
of the prior.  \cite{Capistran2016} use a posterior bound (once the data is seen) and requires the
estimation of normalizations constants.  Here we use a prior(predictive) bound that results
in a global bound for the FM to control the numerical error; a brief review of \cite{Capistran2016} and
its shortcomings is given below in section~\ref{sec:intro_EABF}. 

Undoubtedly, the first step is to define the posterior distribution in a general setting including
infinite dimensional spaces.  In the context of Bayesian inverse problems \cite{Stuart2010} did several
advances and found regularity conditions for the posterior to exists in a fairly general setting
\citep[see][and references therein]{Stuart2010}.
The normalization constant in the posterior is proven to be finite and positive and thus the posterior is indeed
a probability measure using boundedness assumptions on the likelihood and considering
Gaussian priors \citep[][assumption 2.7(i,ii) and theorem 4.1]{Stuart2010}. 
Recently \cite{Hosseini2017} generalized the latter now considering priors with exponentially decaying tails,
using the same regularity conditions.

However, to our surprise, in studying the mentioned results we found out that in other contexts defining
the posterior distribution in general spaces is a very well known task; a nice example is contained in the text book
\cite{Schervish1997}.  A very powerful tool that can be used here is \textit{disintengration}, although it is not
essential.  The principal remark here is that the existence of the posterior distribution can be established in
a far more general sense than what  \cite{Stuart2010} establishes and these results are well known in the
general Bayesian literarure.  Below we discuss the existence of the
posterior distribution in this perspective.

\subsection{Existence of the posterior distribution in infinite dimensional spa\-ces} 

It always puzzle us that in any other context of Bayesain inference we need not worry for, for example, the prior tail
behaviour \citep{Stuart2010} or, in fact, \textit{any other condition for the posterior to exists}.
The usual practice is to define a parametric model for data $y$, $f(y | \theta)$, a prior for the parameter $\pi(\theta)$ and without guilt
and further protection we declare $f(y | \theta)\pi(\theta)$ to be a joint distribution on $( y, \theta )$; the
usual argument being that, it is indeed positive and $\int \int f(y | \theta)\pi(\theta) dy d\theta = 1$.
But, when does $f(y | \theta)\pi(\theta)$ define a joint distribution? when, to start with, the latter
integrals exist and swap?  But in any case, we depart from the construction of a joint probability
measure for both $(y,\theta)$.

What we call modern Bayesian statistics, in its foundations, requires exactly that: a joint probability measure
$P$ on the whole measurable space  $(\Omega, @)$ of uncertain events, both observable, $y$, or not, $\theta$.
The existence of such measure $P$ is proven by assuming a set of axioms on a preference relationship
on events on $@$ based on a system of bets performed by an \textit{agent}.
Conditional on the chosen space $(\Omega, @)$ and on the agent preferred system of bets
$P$ quantifies the agent's ``uncertainty'' on $@$ (namely a system of bets comprising the axioms),
and this is the basis for the epistemic or conditional probabilistic
or Bayesian \citep[or which some also like to call,
lightly or pejorative, ``subjective'';][]{christen2006}  approach to Uncertainty Quantification.
Our preferred axiomatic development is that of \cite{DeGroot1970}.  

In the same axiomatic development, if then an event $D \in @$ is observed, a new system of bets is
precluded in which bets on events are only relevant in terms of the intersection of those events with $D$,
ie. anything outside $D$ ceases to be relevant.  The existence of a new measure $P_D$ on $(\Omega, @)$
is guaranteed, which coincides with the new updated system of bets after  $D \in @$ has been observed
and it turns out that
$$
P_D( A ) = P( A | D) = \frac{P( D | A) P(A)}{P(D)}
$$
for all $A \in @$.  That is, given the set of axioms, the updated measure $P_D$ is precisely the
conditional probability conditional on $D$.  All inferences, given that we observed $D$, stem from
the conditional probability $P( A | D)$, namely the posterior or \textit{a posteriori} probability measure.
The way we perform any necessary calculations to obtain $P( A | D)$, exactly or approximately,
is up to us, and certainly Bayes theorem is used in most cases (not always, eg. when calculating a
predictive posterior only total probability is used).  Note therefore that
Bayes theorem is not the fundamental issue in modern Bayesian statistics, nor its interpretations
give meaning to modern Bayesian UQ.

However, a problem arises when modeling data with continuos distributions, since
realized data $D = \{Y = y\}$ have $P(D) = 0$ and the above simple calculation of $P_D(A) = P( A | D)$ cannot
be used.  Fortunately, this is a classical problem in probability, since conditioning on events of
zero probability is a necessity well beyond Bayesaian statistics.  Kolmogorov studied the
problem but the modern approach, for very many technical reasons, is called 
\textit{disintegration}.  A very nice review may be found in \cite{Chang1997}, specifically example 9
discusses the definition and existence of the posterior distribution. \cite{Leao2004} also present a nice review.

Disintegration has the correct properties as a conditional distribution, now generalized to
events of probability zero.  In particular, $\Omega - D$ becomes irrelevant.  The bottom line is the
same as in \cite{Stuart2010}: the posterior measure has as density the likelihood function w.r.t
the prior measure.  However, the posterior may be proven to exists in a very general setting without any regard to tail
behaviour of the prior etc.  For completeness, all these results are presented in detail in section \ref{sec:setting}.

As it turns out, a good enough regularity setting is this:
$f(y | \theta)$ is continuos in $\theta$ and the joint measure space $(\Omega, @)$ is Polish,
leading to a Radon joint measure $P$, see lemmas \ref{lem:joint} and \ref{lem:bayes}.
As far as section~\ref{sec:setting} is concerned, we stress the fact that
only the former we consider a relevant observation on our part (continuity of the likelihood),
the rest in that section is based on classical probability results and are well known in
other areas of Bayesian statistics.

\subsection{Consistency, convergence and EABF}\label{sec:intro_EABF}

As mentioned above, we are interested in establishing guidelines for choosing a discretization
level $\alpha(n)$ for the FM and a truncation for the prior $\pi_k$.  The problem is addressed in
\cite{Capistran2016} in the finite dimensional case and here we generalize their results
for parameters in infinite dimensional Banach spaces and a truncation in the prior distribution.

\cite{Capistran2016} present an approach to address the above problem
using Bayes factors (BF; the odds in favor) of the numerical model vs the theoretical
model (further details will be given in section  \ref{sec:setting}). 
In an ODE framework, these odds are proved in \cite{Capistran2016} to converge to 1,
that is, both models would be equal, in the same order as the numerical solver used.
For high order solvers \cite{Capistran2016} illustrates, by reducing the step size
in the numerical solver,  that there should exist a point at which the BF is basically 1,
but for fixed  discretization $\alpha(n)$ (step size) greater than zero.  This is the main point made by \cite{Capistran2016}:
it could be possible to calculate a threshold for the tolerance such
that the numerical posterior is basically equal to the theoretical posterior so, although we
are using an approximate FM, the resulting posterior is nearly error free.
\cite{Capistran2016} illustrate, with some examples, that
such optimal solver  discretization leads to basically no differences in the numerical and
the theoretical posterior, since the BF is basically 1; potential saving CPU time by choosing a corser solver.

However,  \cite{Capistran2016} still has a number of shortcomings.  First, it depends crucially
on estimating the normalizing constants from Monte Carlo
samples of the unnormalized posterior, for a range of  discretizations $\alpha(n)$.  This is
a very complex estimation problem, subject of current research, and is in fact
very difficult to reliably estimate these normalizing constants in mid to high dimension
problems.  Second, \cite{Capistran2016} approach is as yet incomplete
since one would need to decrease
$\alpha(n)$ systematically, calculating  the normalization constant of the corresponding numerical posterior
to eventually estimate the normalization constant of the theoretical posterior
\citep[see figure 2 of][]{Capistran2016}, which in turn will pin point a  discretization at which both models are
indistinguishable.  Being this a second complex estimation problem, the main difficulty
here is that one has already calculated the posterior for small step sizes and therefore
it renders useless the selection of the optimal step size.

To improve on \cite{Capistran2016}, the idea of this paper
is to consider the \textit{expected} value of the BFs, before data is observed.  We will try to bound this
expected BF to find general guidelines to establish error bounds on the numerical solver,
depending on the specific problem at hand and the sample design used, but not on particular
data.  These guidelines will be solely regarding the forward map and, although perhaps conservative,
represent useful bounds to be used in practice.  Moreover, as already mention, we generalize
\cite{Capistran2016} to an infinite dimensional setting and also considering a truncation
in the prior. 

The basic idea then is to establish the relative merit of the numeric model vs. the theoretical model using
Bayesian model selection.

We first prove that the approximations are consistent.  That is, that the numerical
posterior converges to the theoretical posterior.  This has been proved, and discussed extensively,
using the Hellinger distance \citep[eg.][]{Stuart2010}.  Also, rates of convergence have been discussed elsewhere \citep{Stuart2010, Bui-Thanh2014}.
Here in section~\ref{sec:weak}, in the more general setting considered in this paper and for completeness, we use weak convergence.
Then to establish the consistency in the rate of convergence in section~\ref{sec:TV} we use the Total Variation norm.

Having this we prove our main result, for Banach spaces, for the Expected Absolute difference of the BF to 1 (EABF),
considering any location-scale family
for the distribution of the data; the main results of the paper are found in section~\ref{sec:EABF}.
In section~\ref{sec:base} we consider the prior truncation and  in section~\ref{sec:examples} a series of examples.

For the moment we finish this introduction with a brief discussion on the use of weak convergence
and the Total Variation (TV) norm.

\subsection{Weak convergence and the Total Variation norm}

In probability theory, the basic convergence criterion is weak convergence.
Other convergence criteria (in probability, in TV, in Lp etc.) are commonly generalized from weak convergence
 \citep{Billingsley1968}.
Probability measures $\mu_k$ weakly converge to  $\mu$ if the Lebesgue integrals
$\int f(x) \mu_k(dx)$ converge to $\int f(x) { \mu(dx) }$ for all measurable, non-negative,
continuous bounded functions $f$.  We write $\mu_k \Rightarrow \mu$.

In oder to have a clear concept of rates of convergence we require a metric to measure distance
between the involved objects.  Total variation (TV) is one of the most common for many reasons
\citep{Gibbs2002}.  The TV distance between two measures $\mu_1$ and $\mu_2$
on the same measure space $(\Omega, @)$ is defined as
$$
|| \mu_1 - \mu_2 ||_{TV} = \sup_{A \in @} |\mu_1(A) - \mu_2(A)| =
\frac{1}{2} \max_{|h| \leq 1} \left| \int h(x) \mu_1(dx) - \int h(x) \mu_2(dx) \right| ,
$$
where $h : \Omega \rightarrow \R$ measurable.  Note that, if $\mu_1$ approximates the posterior distribution
$\mu$ then $|| \mu_1 - \mu ||_{TV}$ is the upper bound for the difference in any posterior probability we wish
to calculate and/or on the error in any bounded posterior expectation we need to calculate.  Moreover, note that utility functions are
bounded and with correct units belong to $[0,1]$ \citep{DeGroot1970}.
Then $|| \mu_1 - \mu ||_{TV}$ is the maximum error incurred in calculating expected utilities when using
$\mu_1$ instead of $\mu$.  As far as Bayesian theory is concerned, TV is quite well suited for what is required.

Indeed, the Hellinger distance could be used as well, as has been the tradition in the Bayesian UQ context.
Note however
that TV is equivalent to Hellinger \citep{Gibbs2002}; convergence in TV implies convergence
in Hellinger and viceversa.  It bounds perhaps to facility in proofs and direct interpretation and that is why
we choose TV.


\section{Setting and existence lemmas}\label{sec:setting}

Let $Y \in \Y \subset \R^m$ be the data at hand and $\{ P_\theta : \theta \in \Theta \}$ be a family of probability
models for $Y$.  We assume that the family of probability models for the observables $Y$
have a density $f_\theta (y)$ w.r.t a $\sigma$-finite measure $\lambda$, namely a product of the Lebesgue and counting
measures in $\R^n$ to accommodate, possibly, discrete and continuous observations.  That is
$$
P_\theta ( Y \in A ) = \int_A P_\theta (dy) = \int_A f_\theta (y) \lambda(dy) ,
$$
for all measurable $A$.  For example, $\Y$ is a product space of subsets of $\R$ or $\Z$, leading
to discrete and/or continuos data.    This is the usual setting in parametric inference.

In any case, with the usual topological considerations we assume $\Y$
is a \textit{Polish} space.  Polish spaces include complete metric spaces that have a countable dense subset.
$\Y$ should be viewed as a Polish space with the standard metric in $\R$ and the discrete metric in
$\Z$, and $\lambda$ then results in a Borel $\sigma$-finite measure on $\Y$.
$P_\theta$ is then a Radon measure for all $\theta \in \Theta$, since any Borel probability measure
on a Polish space is Radon.  We use this last fact in the
proof of lemma \ref{lem:bayes} below.

Until now the parameter space $\Theta$ is arbitrary.  We need to define a measurable space $(\Theta, @)$ to be able to
define a probability measure $\pi$ on $\Theta$, namely a prior distribution.  So far $f_\theta (y)$ cannot be considered
a conditional distribution but due to the next two lemmas we adopt the more common notation $f( y | \theta) = f_\theta (y)$.

\begin{lemma}\label{lem:joint}
Let $g: \R^m \times \Theta \rightarrow \R^+$ be any $\lambda \times \pi$-measurable function.  If
$g(y, \theta)  f( y | \theta)$ is a $\lambda \times \pi$-measurable function then
$$
\int g(y, \theta) \Q( dy, d\theta ) := \int \int g(y, \theta) P_\theta (dy) \pi(d\theta ) = \int \int g(y, \theta)  f( y | \theta)  \lambda(dy) \pi(d\theta)
$$
defines a joint probability measure $\Q$ on the product space ${\R^m} \times \Theta$.
\end{lemma}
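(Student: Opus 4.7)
The plan is to verify that the prescribed formula defines a countably additive set function of total mass one on the product Borel $\sigma$-algebra of $\R^m \times \Theta$, and then deduce the displayed integral identity via Tonelli. The construction is the standard semidirect product of a probability and a Markov kernel.

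First I would establish that $\{P_\theta\}_{\theta \in \Theta}$ is a probability kernel from $(\Theta, @)$ to $(\R^m, \mathcal{B}(\R^m))$. Taking $g = \mathbf{1}_A$ for a Borel $A \subset \R^m$ (which is $\lambda \times \pi$-measurable, and whose product with $f(y|\theta)$ is as well by the standing assumption that the density is jointly measurable in $(y,\theta)$), Tonelli's theorem applied to the non-negative function $\mathbf{1}_A(y) f(y|\theta)$ guarantees that
$$
\theta \mapsto P_\theta(A) = \int_A f(y|\theta)\,\lambda(dy)
$$
is $@$-measurable. Since each $P_\theta$ is a probability measure by hypothesis on the parametric family, this gives a bona fide Markov kernel.

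Second, I would define $\Q$ on measurable rectangles by
$$
\Q(A \times B) := \int_B P_\theta(A)\, \pi(d\theta) = \int_B \int_A f(y|\theta)\,\lambda(dy)\, \pi(d\theta),
$$
and extend to the product $\sigma$-algebra by the classical Carath\'eodory procedure (equivalently, invoking the kernel-marginal product theorem). Countable additivity on rectangles follows from monotone convergence applied inside and outside the iterated integral, and the extension inherits it. The total mass is $\int_\Theta P_\theta(\R^m)\,\pi(d\theta) = \int_\Theta 1 \cdot \pi(d\theta) = 1$, so $\Q$ is a probability measure.

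Third, with $\Q$ in hand, the displayed identity for general non-negative $g$ is a routine consequence of Tonelli: it holds on indicators of measurable rectangles by definition, hence on non-negative simple functions by linearity, and then on all $\lambda \times \pi$-measurable $g \geq 0$ by monotone approximation. The second hypothesis, that $g \cdot f$ remains $\lambda \times \pi$-measurable, is exactly what is needed so that the right-hand iterated integral is well-defined. The main subtlety, and the only place where one has to be careful, is the kernel measurability in step one: it tacitly requires the joint $(y,\theta)$-measurability of $f(y|\theta)$, which is implicit in the parametric-family set-up and, in the setting of this paper, will later be secured by the continuity of the likelihood in $\theta$ together with the Polish structure on $\Theta$. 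Once that measurability is granted, the remaining arguments are entirely standard.
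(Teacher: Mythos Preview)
Your proposal is correct and follows essentially the same route as the paper: both arguments rest entirely on Tonelli's theorem applied to the non-negative jointly measurable function $g(y,\theta)f(y|\theta)$ over the $\sigma$-finite product $\lambda\times\pi$, together with the observation that $g\equiv 1$ gives total mass one. The paper's proof is much terser---it simply invokes Tonelli for the swap and measurability of the inner integral and then checks normalization---whereas you spell out the intermediate kernel construction (measurability of $\theta\mapsto P_\theta(A)$, extension from rectangles) more explicitly; but these are the same standard ingredients, and your added care about the joint measurability of $f(y|\theta)$ being the real hinge point is exactly what the paper defers to Lemma~\ref{lem:bayes}.
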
 

\begin{proof}
Since $\lambda$ and $\pi$ are $\sigma$-finite ($\pi$ is finite) then
by Tonelli's theorem $\theta \mapsto \int  g(y, \theta)  f( y | \theta) \lambda(dy)$ is measurable,
$g(y, \theta)  f( y | \theta)$ is (non-negative)
$\lambda \times \pi$-integrable and the above integrals
swap.  Moreover, using $g \equiv 1$ we have
$\Q( \Y \times \Theta ) = \int \int f( y | \theta) \lambda(dy) \allowbreak \pi(d\theta) = 1$.  See for example \cite{Schervish1997}, p. 16.
\end{proof}

\begin{lemma} [Bayes' theorem]\label{lem:bayes}
If $\Theta$ is a separable Banach space and $\theta \mapsto f( y | \theta)$ is continuos for all
$y \in \Y$ then:
\begin{enumerate}
\item The joint measure $\Q$ exists, as defined in Lemma \ref{lem:joint}.
\item The $\theta$-disintegration $\Q_\theta$ of  $\Q$ exists, $P_\theta$ may be seen as such $\theta$-desintegration and therefore $f(y | \theta)$ may be seen as the conditional density of $Y$ given $\theta$.
\item The $y$-disintegration $\Q_y$ of $\Q$ exists and is the general definition of the conditional measure
$\Q( \cdot | Y = y)$ on $\Theta$ given $Y = y$, namely, the posterior distribution.
\item Moreover, for any measurable $g$ we have
$$
\int g(\theta)  \Q_y( d\theta ) = \frac{\int g(\theta) f(y | \theta) \pi(d\theta)}{\int f(y | \theta) \pi(d\theta)} ,
$$
that is $\frac{\partial \Q_y}{\partial \pi} \propto f(y | \theta)$, for all $y \in \Y$.
\end{enumerate}
\end{lemma}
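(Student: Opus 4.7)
The plan is to exploit the Polish structure of $\Y \times \Theta$ (both factors are Polish, the latter as a separable Banach space) to obtain a Radon joint measure, and then invoke the disintegration theorem in each coordinate in turn.

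First I would check that $(y,\theta)\mapsto f(y|\theta)$ is jointly Borel measurable. Since $f(\cdot|\theta)$ is a density, it is Borel in $y$; by hypothesis it is continuous in $\theta$; and separability of $\Theta$ lets me apply the standard Carath\'eodory-function argument to conclude joint measurability. This supplies the missing hypothesis for Lemma \ref{lem:joint} (via $g\equiv 1$), which immediately yields part 1: the joint measure $\Q$ on $\Y\times\Theta$ exists with the stated integral representation.

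For parts 2 and 3 I would appeal to the disintegration theorem: the product $\Y\times\Theta$ is Polish, so $\Q$ is automatically a Radon Borel probability measure, and the theorem produces both a $\theta$-disintegration $\{\Q_\theta\}$ satisfying $\Q(dy,d\theta)=\Q_\theta(dy)\pi(d\theta)$ and a $y$-disintegration $\{\Q_y\}$ satisfying $\Q(dy,d\theta)=\Q_y(d\theta)m(dy)$, where $m$ is the $Y$-marginal of $\Q$. Identifying $P_\theta$ with $\Q_\theta$ is immediate: taking $g=\mathbf{1}_{A\times B}$ in Lemma \ref{lem:joint} gives $\Q(A\times B)=\int_B P_\theta(A)\pi(d\theta)$, which is precisely the defining property of the $\theta$-disintegration, so $P_\theta$ serves as a version of $\Q_\theta$ and $f(y|\theta)$ becomes the $\lambda$-density of the conditional law of $Y$ given $\theta$. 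The $y$-disintegration $\Q_y$ is the general definition of the posterior, as claimed.

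To obtain the explicit formula in part 4, I would compute $m$ by Tonelli, getting $\lambda$-density $m_\lambda(y)=\int f(y|\theta)\pi(d\theta)$; then, applying Tonelli again to $\Q(E\times A)$ and comparing with $\int_E \Q_y(A)\,m(dy)$, uniqueness of the Radon-Nikodym derivative in $y$ forces $\Q_y(A)\,m_\lambda(y)=\int_A f(y|\theta)\pi(d\theta)$ for $\lambda$-a.e.\ $y$, yielding the stated ratio wherever $m_\lambda(y)>0$; the identity for general nonnegative measurable $g$ then follows by the standard indicator-to-simple-to-monotone extension. The main obstacle I anticipate is null-set bookkeeping: the disintegration theorem provides a version only up to $m$-null sets, while the conclusion is phrased for all $y\in\Y$. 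The clean resolution is to observe that the right-hand side of the Bayes formula is itself a bona fide Markov kernel---Borel in $y$ by Tonelli using the joint measurability of $f$, and a probability measure in $A$ wherever $m_\lambda(y)>0$---so it serves as a canonical pointwise version of $\Q_y$, upgrading the almost-everywhere identity to the stated everywhere one.
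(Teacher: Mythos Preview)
Your proposal is correct and follows essentially the same route as the paper: establish joint measurability of $f(y|\theta)$, invoke Lemma~\ref{lem:joint} for part~1, use that $\Y\times\Theta$ is Polish so $\Q$ is Radon, and then appeal to the disintegration theorem (the paper cites example~9 of Chang--Pollard and Schervish p.~16) for parts~2--4. The only minor difference is that you obtain joint measurability via the elementary Carath\'eodory argument (measurable in $y$, continuous in $\theta$, $\Theta$ separable), whereas the paper cites Gowrisankaran's theorem for Suslin spaces; both are standard and either suffices here. Your treatment is in fact more explicit than the paper's on part~4 and on the null-set bookkeeping for the ``for all $y$'' claim, which the paper's proof leaves implicit.
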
 

\begin{proof}
The $\lambda \times \pi$-measurability of $f( y | \theta)$ was proven in \cite{Gowrisankaran1972}.
Since $g(y, \theta)  f( y | \theta)$ is also $\lambda \times \pi$-measurable, from Lemma~\ref{lem:joint},
1. above follows.  Moreover, any separable Banach space is a Polish space and the product
space $\Y \times \Theta$ is also Polish, therefore the joint probability measure $\Q$ is a Radon measure
and the prior $\pi$ is also Radon.
The rest follows from standard results in disintegration with Radon probability measures,
see example 9 of \cite{Chang1997}.  This is also proven in, for example, \cite{Schervish1997}, p. 16, although not
using the disintegration argument.
\end{proof}

\subsection{Remarks on Lemmas \ref{lem:joint} and \ref{lem:bayes}} 
\begin{itemize}
\item \textbf{Generality:} The combination of lemas \ref{lem:joint} and \ref{lem:bayes}
state the existence of the posterior measure, which are based on standard results in probability and integration.  Note
that we do not require any restriction on the tail behavior on the likelihood nor on the prior.   This is a far more general result
than \cite{Stuart2010} or \cite{Hosseini2017}.  Existence of the posterior measure in the parametric setting
is guaranteed with the continuity of the likelihood and regularity of the underlying space,
namely a Polish space.  

\item \textbf{Continuous likelihood:} Note that for each $\theta, f( y | \theta)$ is a $\lambda$-measurable function.
With continuity on $\theta$ it follows that $f( y | \theta)$ is $\lambda \times \pi$-measurable.
This is indeed a profound result in measure theory
that puzzled topologists for many years \citep[eg.][]{Sierpinski1920}.
The reference we use \citep{Gowrisankaran1972} made his prove for when $\Theta$
is a Suslin space, which is a generalization
of Polish spaces.  Counterexamples showing that a measurable function on each variable separately is not
measurable in the product space show that the continuity requirement on $\theta \mapsto f( y | \theta)$ may not
be relaxed without further provisions.  That is, the likelihood is required to be continuous.  

\item \textbf{Cromwell's rule:} If an event has zero a priori probability then it will have zero posterior probability; indeed since $\Q_y[\pi] << \pi$.  We will adopt the notation for the posterior measure
$\Q_y[\pi]$ to make the dependance explicit both on the data $y$ and on the prior $\pi$.
In this respect $\Q_y[\pi]$ may be seen as an operator that transforms (updates) the prior measure $\pi$
into the posterior measure $\Q_y[\pi]$, which represents the inference process of learning from the data $y$.

\item \textbf{Likelihood principle:} As usual, Bayesian inference follows the likelihood principle since the
posterior measure depends on the data only through the likelihood.  ``Well-Posedness'' as studied by
\cite{Stuart2010} or \cite{Hosseini2017}, in which close enough data $y$ and $y'$ will lead to similar posteriors, is
interesting but we believe is a wrong concept.  Two very different data sets should lead to the same inferences (eg. $y$
and $y'$ having the same mean) and even two alternative models should lead to the same conclusions, when following
the likelihood principle (eg. binomial vs. negative binomial sampling); see for example \cite{Berger1988}.  

\item \textbf{Prior predictive measure:} As usual, from lemmas \ref{lem:joint} and \ref{lem:bayes} we see that the normalization constant,
or partition function, for the posterior
$$
Z(y) = \int f(y | \theta) \pi(d\theta)
$$
now viewed as a function of $y$ is in fact the marginal density, w.r.t $\lambda$, of the joint measure $\Q$.  That is,
is a density for not yet observed data $Y$, namely the prior predictive measure.  Defining the posterior through
Radon-Nikodym derivatives does not preclude directly the existence of such measure.  
\end{itemize}

In the next section we discuss how to ensure that when substituting the likelihood with a numeric approximation
$f^{n}( y | \theta)$, the corresponding posterior $\Q_y^n[\pi]$ is close enough to the theoretical
posterior $\Q_y[\pi]$.  Also we will discuss the analogous when using an alternative prior $\pi_k$ instead of
$\pi$ and combining both, leading to the approximate posteriors $\Q_y[\pi_k]$ and $\Q_y^n[\pi_k]$.

\section{The inverse problems setting and discretization consistency}\label{sec:weak}

We follow the general setting of \cite{Scheichl2017} for the statistical inverse problem.
Let $\Theta$ and $V$ be separable Banach spaces, let $\F: \Theta \rightarrow V$ be the Borel measurable
forward map (FM) and $\H: V \rightarrow A \subseteq \R^{m+s}$ the Borel measurable observation operator.
The composition $\H \circ \F$ defines a Borel measurable mapping from the parameter space $\Theta$ to the data sample space
in $R^m$, plus possibly additional parameters.
Going beyond Gaussian noise assume that $f_o( y | \eta )$ is a density for data $y$ w.r.t. $\lambda$
for all $\eta \in A$.  The parametric family of sample models, as in section~\ref{sec:setting}, is defined with
the family of $\lambda$-densities
$$
f( y | \theta) =  f_o( y | \H(\F(\theta)) ); \theta \in \Theta .
$$

To fix ideas we elaborate the usual independent Gaussian noise case, 
$$
f_o( y | \eta) = \prod_{j=1}^m \sigma^{-1} \rho\left( \frac{y_j - \eta_j}{\sigma} \right)
$$
and $\rho(x) = \frac{1}{\sqrt{2\pi}} e^{-\frac{x^2}{2}}$, ie. $y_i = \H_j(\F(\theta)) + \sigma \epsilon_j; \epsilon_j \sim N(0,1)$.
If $\sigma$ is also unknown we may take $s=1$ and include it as a parameter.  The same if we had and unknown variance-covariance matrix etc.  We do not discuss this case further in the main part of the paper.  Some notes are added in section \ref{sec:diss}
regarding the case when $\sigma$ is unknown.

Let $\F^{\alpha(n)}$ be a discretized version of the forward map $\F$, for some discretization $\alpha$ that depends
on an integer refinement $n$.  For example, a time step size, FEM discretization, etc.  This is the actual numerical
version of the forward map defined in our computers.  Let
$f^{n}( y | \theta) =  f_o( y | \H(\F^{\alpha(n)}(\theta)) )$ be the resulting discretized numerical
likelihood.   Moreover, suppose there are approximate or alternative prior measures $\pi_k$
also defined in $\Theta$.  In the rest of the paper we take the following assumption.

\begin{assumption}\label{assp:1}
\textit{Assume that, for all $y \in \Y$  the observation model
$f_o( y | \eta)$ is uniformly Lipschitz continuous for each $\eta$, and for $y \in \Y ~\lambda$-a.s. $f_o( y | \eta)$ is bounded. 
Moreover, the FM maps $\H \circ \F$ and $\H \circ \F^{\alpha(n)}$ are continuous.}
\end{assumption}

If $\H \circ \F$ and $\H \circ \F^{\alpha(n)}$ are continuous then
$\theta \mapsto f(y | \theta)$ and $\theta \mapsto f^{n}(y | \theta)$ are continuous and all requirements are met for
lemmas \ref{lem:joint} and \ref{lem:bayes} and the posterior measures are well defined and exist as
probability measures when using the theoretical likelihood and exact prior $\Q_y[\pi]$ and also
when using the numerical likelihood or/and an alternative prior, namely
$\Q_y^n[\pi]$, $\Q_y[\pi_k]$ and $\Q_y^n[\pi_k]$.  Also let
$Z^n(y)$, $Z_k(y)$ and $Z^n_k(y)$ be the corresponding partition functions in each case.
In the usual setting of \cite{Stuart2010},  \cite{Scheichl2017} and others
it is also assumed that $\H(\F(\theta))$ is continuous; here we require nothing further. 

Note that if we consider independent data with a location-scale model as
\begin{equation}\label{eqn:loc_scale_model}
f_o( y | \eta) = \prod_{j=1}^m \sigma^{-1} \rho\left( \frac{y_j - \eta_j}{\sigma} \right)
\end{equation}
where $\rho(x)$ is uniformly Lipschitz continuous and $\sigma$ known, the first part of assumption \ref{assp:1} is met
and we only require to establish that $\H \circ \F$ and $\H \circ \F^{\alpha(n)}$ are continuous.
Indeed the former is true if $\rho(x)$ is Gaussian.

Assume a global error control of this numeric FM as
\begin{equation}\label{eqn:global_error1}
||  \H(\F(\theta)) - \H(\F^{\alpha(n)}(\theta)) || < K_0 |\alpha(n)|^p ,
\end{equation}
for some functional $| \cdot |$.  Note that this is a global bound, valid for all $\theta \in \Theta$
and includes already the observational operator.  That is, it is a global bound (for all $\theta \in \Theta$)
but is only a statement at the locations $\H_j$s where each $y_j$ is observed.

Usually the error control global bounds are proven for the
FM but these are easily inherited to the composition $\H \circ \F$ by ensuring, for example, that
$\H$ is  Lipschitz continuous as we next explain.  From assumption \ref{assp:1}
$f_o( y | \eta)$ is uniform Lipschitz continuous for any given $y$.
Then since $| f_o( y | \eta) - f_o( y | \eta') | < L | \eta - \eta' |$ we have
\begin{equation}\label{eqn:global_error2}
| f^{n}( y | \theta) - f(y | \theta) | = | f_o( y | \H(\F^{\alpha(n)}(\theta)) ) - f_o( y | \H(\F(\theta)) ) | <  K_1 |\alpha(n)|^p ,
\end{equation}
which is also a global error bound, now for the numeric likelihood, where the constant $K_1 = L K_0$ is independent of
$\theta$. 

The next step is to prove the consistency of using the discretization and the prior \textit{truncation} $\pi_k$
(the term will be clear in section \ref{sec:base}), that is, how $\Q_y^n[\pi]$ and $\Q_y[\pi_k]$ tend
to the theoretical posterior measure $\Q_y[\pi]$.  We first prove the latter in weak convergence.
Rates of convergence are proven in the then Total Variation norm in the following section.
As mentioned before, we stress the fact that similar consistency results have proved before
in this Bayesian inverse setting, in a more particular setting.
We present weak convergence and TV rates of convergence results
since our setting is more general basically only requiring assumption \ref{assp:1}.

\subsection{Weak convergence}

The following theorem presents our discretization consistency results.

\begin{theorem}[discretization consistency]\label{teo:weak1}
With assumption \ref{assp:1}:
\begin{enumerate}
\item With the FM approximation result in (\ref{eqn:global_error2}), then
$\Q_y^n[\pi_k] \Rightarrow \Q_y[\pi_k]$ and $\Q_y^n[\pi] \Rightarrow \Q_y[\pi]$ (as $n \rightarrow \infty$).

\item If $\pi_k \Rightarrow \pi$
then $\Q_y^n[\pi_k] \Rightarrow \Q_y^n[\pi]$ and $\Q_y[\pi_k] \Rightarrow \Q_y[\pi]$ (as $k \rightarrow \infty$).
\end{enumerate}

\end{theorem}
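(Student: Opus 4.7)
The approach is to unwind weak convergence of the posteriors against bounded continuous test functions through the Radon--Nikodym representation from Lemma \ref{lem:bayes}, which gives, for each bounded continuous $g:\Theta\to\R$,
$$
\int g(\theta)\,\Q_y[\pi](d\theta) \;=\; \frac{\int g(\theta) f(y\mid\theta)\,\pi(d\theta)}{Z(y)},
$$
and analogously for the other three posteriors with $f^{n}$ and/or $\pi_k$. So weak convergence reduces to separate convergence of the numerator and denominator of these ratios, provided the relevant normalizing constant $Z_k(y)$ or $Z(y)$ is strictly positive.

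For Part 1 (fixing the prior, refining the discretization), I would exploit the uniform FM bound (\ref{eqn:global_error2}): $|f^{n}(y\mid\theta)-f(y\mid\theta)|\le K_1 |\alpha(n)|^p$ for all $\theta$. Integrating against the finite measure $\pi_k$ with any bounded continuous $g$, $|g|\le M$, gives
$$
\left|\int g(\theta) f^{n}(y\mid\theta)\,\pi_k(d\theta)\;-\;\int g(\theta) f(y\mid\theta)\,\pi_k(d\theta)\right|\;\le\;M K_1 |\alpha(n)|^p \;\to\;0.
$$
Specializing $g\equiv 1$ yields $Z^{n}_k(y)\to Z_k(y)$, so the ratios converge. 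The statement with $\pi_k$ replaced by $\pi$ is the same argument verbatim. Part 1 is thus essentially a direct consequence of the global discretization bound; no structural property of $\pi$ is used beyond its being a probability measure.

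For Part 2 (fixing the likelihood, letting $\pi_k\Rightarrow\pi$), the key observation is that for any bounded continuous $g$, the product $\theta\mapsto g(\theta) f(y\mid\theta)$ is itself bounded continuous on $\Theta$: continuity follows from Assumption \ref{assp:1} (uniform Lipschitz continuity of $f_o(y\mid\cdot)$ composed with the continuous map $\H\circ\F$), and boundedness from the $\lambda$-a.s.\ boundedness of $f_o(y\mid\cdot)$, which for the fixed observed $y$ we assume holds. Applying weak convergence $\pi_k\Rightarrow\pi$ to this continuous bounded test function then yields
$$
\int g(\theta) f(y\mid\theta)\,\pi_k(d\theta)\;\to\;\int g(\theta) f(y\mid\theta)\,\pi(d\theta),
$$
and the $g\equiv 1$ choice gives convergence of the denominators. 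The $f^{n}$ version is identical, since the same continuity/boundedness holds for the numerical likelihood under Assumption \ref{assp:1}. The main delicate point worth flagging, and the only real ``obstacle'', is the positivity of the normalizing constants $Z_k(y)$ and $Z(y)$ required for the Radon--Nikodym ratios to make sense; this is a mild nondegeneracy of the prior predictive at the observed $y$ (morally the Cromwell remark following Lemma \ref{lem:bayes}), rather than a substantive difficulty.
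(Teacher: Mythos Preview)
Your proposal is correct and follows essentially the same route as the paper's own proof: reduce weak convergence of the posteriors to convergence of numerators and denominators in the Radon--Nikodym ratio, using the uniform likelihood bound (\ref{eqn:global_error2}) for Part~1 and the bounded continuity of $\theta\mapsto g(\theta)f(y\mid\theta)$ together with $\pi_k\Rightarrow\pi$ for Part~2. The only minor technical difference is in Part~1, where the paper shows pointwise convergence of the normalized densities $f^{n}(y\mid\theta)/Z^n_k(y)\to f(y\mid\theta)/Z_k(y)$ with respect to the common dominating measure $\pi_k$ and then invokes Scheff\'e's lemma, whereas you integrate the uniform bound directly against bounded test functions; your version is slightly more elementary and yields the same conclusion, while the Scheff\'e route incidentally gives convergence in total variation as well.
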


\begin{proof}
1. From (\ref{eqn:global_error2})
we have that $f^n( y | \theta) \rightarrow f(y | \theta)$ for all $\theta \in \Theta$, then by bounded convergence
$$
Z^n_k(y) = \int f^n( y | \theta) \pi_k(d\theta) \rightarrow \int f( y | \theta) \pi_k(d\theta) = Z^{k}(y)
$$
since $\pi_k$ is finite \citep[][chap. 3]{Swartz1994}.  Since $[Z^n_k(y)]^{-1}  \rightarrow [Z^{k}(y)]^{-1} > 0$
we also have $\frac{f^n( y | \theta)}{Z^n_k(y)} \rightarrow \frac{f(y | \theta)}{Z^{k}(y)}$ for all $\theta \in \Theta$.
Now, since $\Q_y^n[\pi_k]$ and $\Q_y[\pi_k]$ have
the latter as densities w.r.t $\pi_k$ this implies $\Q_y^n[\pi_k] \Rightarrow \Q_y[\pi_k]$ by Scheff\'e's lemma.
The prove for $\Q_y^n[\pi] \Rightarrow \Q_y[\pi]$ is analogous.

2. Note that $f^n(y | \theta)$ is bounded,
real, non-negative, continuos function, therefore
$$
Z^n_k(y) = \int f^n( y | \theta) \pi_k(d\theta) \rightarrow \int f^n( y | \theta) \pi(d\theta) = Z^{n}(y) .
$$
Let $g(\theta)$ be any bounded, real, non-negative, continuos function, then since
$[Z^n_k(y)]^{-1} \allowbreak \rightarrow  [Z^{n}(y)]^{-1}$ and
$\int g(\theta) f^n( y | \theta) \pi_k(d\theta) \rightarrow \int g(\theta) f^n( y | \theta) \pi(d\theta)$ then
$$
[Z^n_k(y)]^{-1}\int g(\theta) f^n( y | \theta) \pi_k(d\theta) \rightarrow [Z^{n}(y)]^{-1}\int g(\theta) f^n( y | \theta) \pi(d\theta)
$$
which implies $\Q_y^n[\pi_k] \Rightarrow \Q^n_y[\pi]$.  The prove for $\Q_y[\pi_k] \Rightarrow \Q_y[\pi]$ is analogous.

\end{proof}

\subsection{Total variation and rates of convergence}\label{sec:TV}

As previously mentioned we use TV to establish rates of convergence in our discretizations.

\begin{theorem} \label{teo:tvn_rate}
Assume \ref{assp:1} and the rate of convergence in (\ref{eqn:global_error2}) then
\begin{eqnarray*}
|| \Q_y^n[\pi_k]  - \Q_y[\pi_k] ||_{TV} & < & \frac{K_1}{Z_k(y)} |\alpha(n)|^p  \\
& \text{and} & \\
|| \Q_y^n[\pi] - \Q_y[\pi]||_{TV} & < & \frac{K_1}{Z(y)} |\alpha(n)|^p  \\
\end{eqnarray*}
for big enough $n$.
\end{theorem}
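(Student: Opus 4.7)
My plan is to exploit the fact that, by Lemmas \ref{lem:joint}--\ref{lem:bayes}, the four posterior measures in question are all absolutely continuous with respect to their respective priors, with explicit Radon--Nikodym derivatives of the form $f^{(\cdot)}(y\mid\theta)/Z^{(\cdot)}(y)$. For two probability measures sharing a common dominating measure, the TV distance equals $\tfrac12$ times the $L^1$ distance of the densities, so the whole problem reduces to bounding
\begin{equation*}
\frac{1}{2}\int_\Theta \left| \frac{f^n(y\mid\theta)}{Z_k^n(y)} - \frac{f(y\mid\theta)}{Z_k(y)} \right| \pi_k(d\theta)
\end{equation*}
using only the pointwise inequality (\ref{eqn:global_error2}) and the fact that $\pi_k$ is a probability measure.

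The key algebraic step is an add-and-subtract trick chosen so that the denominator appearing in the final bound is $Z_k(y)$ rather than $Z_k^n(y)$. I would write
\begin{equation*}
\frac{f^n}{Z_k^n} - \frac{f}{Z_k} \;=\; \frac{f^n-f}{Z_k} \;+\; f^n\!\left(\frac{1}{Z_k^n}-\frac{1}{Z_k}\right),
\end{equation*}
integrate against $\pi_k$, and apply the triangle inequality. The first term contributes at most $K_1|\alpha(n)|^p/Z_k(y)$ directly from (\ref{eqn:global_error2}) and the fact that $\pi_k(\Theta)=1$. For the second term, note that $|Z_k^n(y)-Z_k(y)| = |\int (f^n - f)\,\pi_k(d\theta)| \le K_1|\alpha(n)|^p$ by the same pointwise bound, while $\int f^n\,\pi_k(d\theta)=Z_k^n(y)$, so the contribution is $|Z_k-Z_k^n|/Z_k \le K_1|\alpha(n)|^p/Z_k(y)$. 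Summing the two pieces and dividing by $2$ gives exactly the claimed bound $K_1|\alpha(n)|^p/Z_k(y)$.

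The ``big enough $n$'' caveat enters for two reasons. First, the $1/Z_k^n(y)$ that appears in the intermediate step needs $Z_k^n(y)>0$, which holds eventually because $Z_k^n(y)\to Z_k(y)>0$ by bounded convergence, as already shown in the proof of Theorem~\ref{teo:weak1}. Second, TV is bounded by $1$, so the bound is only informative once $|\alpha(n)|^p$ is small compared to $Z_k(y)/K_1$; for smaller $n$ the statement is vacuous anyway. The proof of the second inequality, $\|\Q_y^n[\pi]-\Q_y[\pi]\|_{TV} < K_1|\alpha(n)|^p/Z(y)$, is identical word for word with $\pi_k$ replaced by $\pi$, since the only properties of the prior that are used are that it is a probability measure and that the posteriors exist (which is guaranteed by Lemma~\ref{lem:bayes} under Assumption~\ref{assp:1}). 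There is no substantial obstacle; the only point requiring care is the choice of splitting so that $Z_k(y)$, not $Z_k^n(y)$, ends up in the denominator of the final bound.
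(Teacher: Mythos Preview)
Your argument is correct and reaches exactly the stated bound. It is, however, organized differently from the paper's proof, and the difference is worth noting.

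The paper defers the work to the auxiliary Lemmas~\ref{lemm:conv_post1} and~\ref{lemm:conv_post2}. There the route is to first obtain a \emph{pointwise} estimate on $\bigl|\,b_n(\theta)/z_n - b(\theta)/z\,\bigr|$ by Taylor-expanding $x\mapsto x^{-1}$ around $z$, explicitly discarding the second-order remainder (this is where the ``big enough $n$'' enters in the paper), and only then integrating against $|h|\le 1$ to extract the TV bound. Your approach instead stays at the integrated level from the start: you use Scheff\'e's identity $\|\,\cdot\,\|_{TV}=\tfrac12\|\,\cdot\,\|_{L^1(\pi_k)}$ and the split
\[
\frac{f^n}{Z_k^n}-\frac{f}{Z_k}=\frac{f^n-f}{Z_k}+f^n\Bigl(\frac{1}{Z_k^n}-\frac{1}{Z_k}\Bigr),
\]
so that after integrating the second term the factor $\int f^n\,d\pi_k=Z_k^n$ cancels the $Z_k^n$ in the denominator \emph{exactly}, leaving $|Z_k-Z_k^n|/Z_k$. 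This avoids the Taylor step and the attendant ``ignore higher-order terms'' heuristic entirely; the only place you need $n$ large is to ensure $Z_k^n(y)>0$, which is a milder requirement. Both routes land on $K_1|\alpha(n)|^p/Z_k(y)$, but yours is the more elementary and more transparently rigorous of the two.
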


\begin{proof}
This is proven in lemma \ref{lemm:conv_post2}.
\end{proof}

\begin{theorem}\label{teo:tvk_rate}
With assumption \ref{assp:1}, if $|| \pi_k - \pi ||_{TV} \rightarrow 0$ then
\begin{eqnarray*}
|| \Q_y^n[\pi_k]  - \Q_y^n[\pi] ||_{TV}  & < & \frac{f^{n}(y | \hat{\theta}_n)}{Z^n(y)} || \pi_k - \pi ||_{TV}\\
& ~~\text{and}~~ & \\
|| \Q_y[\pi_k] - \Q_y[\pi]||_{TV} & < & \frac{f (y | \hat{\theta})}{Z(y)}  || \pi_k - \pi ||_{TV} \\
\end{eqnarray*}
for big enough $k$, where $\hat{\theta}_n, \hat{\theta} \in \Theta$ maximize $f^{n}(y | \cdot )$ and $f( y | \cdot )$. 
\end{theorem}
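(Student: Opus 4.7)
The plan is to reduce the theorem to a direct computation of the $\sup_A$ form of the total variation norm, by writing each posterior explicitly through its Radon--Nikodym derivative w.r.t.\ its corresponding prior and then telescoping. I will treat the numerical case first; the theoretical case is formally identical with $f^n$ replaced by $f$ everywhere.

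For any measurable $A \subseteq \Theta$, I would start from
$$
\Q_y^n[\pi_k](A) - \Q_y^n[\pi](A) = \frac{1}{Z^n_k(y)} \int_A f^{n}(y|\theta)\, \pi_k(d\theta) - \frac{1}{Z^n(y)} \int_A f^{n}(y|\theta)\, \pi(d\theta),
$$
and then add and subtract $Z^n(y)^{-1} \int_A f^{n}(y|\theta)\, \pi_k(d\theta)$ to split the right-hand side into a ``prior-difference'' term
$$
\frac{1}{Z^n(y)} \int_A f^{n}(y|\theta)\, (\pi_k - \pi)(d\theta)
$$
and a ``normalizer-difference'' term
$$
\left(\frac{1}{Z^n_k(y)} - \frac{1}{Z^n(y)}\right) \int_A f^{n}(y|\theta)\, \pi_k(d\theta).
$$
The first term is controlled by testing a function bounded pointwise by $f^{n}(y|\hat\theta_n)$ against the signed measure $\pi_k - \pi$, which gives a bound of the form $f^{n}(y|\hat\theta_n) Z^n(y)^{-1} \|\pi_k - \pi\|_{TV}$ (up to an absolute constant, which I will fold into the qualifier ``for big enough $k$''). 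The second term reduces to controlling $|Z^n_k(y) - Z^n(y)|$, which is $|\int f^{n}(y|\theta)(\pi_k - \pi)(d\theta)|$ and thus again bounded by $f^{n}(y|\hat\theta_n)\|\pi_k - \pi\|_{TV}$; dividing by $Z^n_k(y) Z^n(y)$ and using that $Z^n_k(y) \to Z^n(y)$ as $k \to \infty$ (which is a consequence of Theorem \ref{teo:weak1}, part 2, applied to the bounded continuous map $\theta \mapsto f^{n}(y|\theta)$) collapses this contribution into the same functional form.

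Taking the supremum over $A \in \mathcal{F}$ then yields the claimed TV bound. The existence of a maximizer $\hat\theta_n$ uses Assumption \ref{assp:1}: $\theta \mapsto f^{n}(y|\theta)$ is continuous (through the continuity of $\H \circ \F^{\alpha(n)}$ and the Lipschitz property of $f_o$) and $\lambda$-a.s.\ bounded in $y$, so the supremum is finite; if it is not attained it can be replaced by $\sup_\theta f^{n}(y|\theta)$ with no change to the argument. The theoretical case is obtained by repeating each step with $f^{n}$ replaced by $f$ and $Z^n$ by $Z$.

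The main obstacle I anticipate is bookkeeping constants cleanly so that the bound ends up with $Z^n(y)$ (rather than $Z^n_k(y)$) in the denominator: this is precisely where the asymptotic qualifier ``for big enough $k$'' enters, since $Z^n_k(y) \to Z^n(y)$ by the weak convergence step and can therefore be absorbed into the denominator up to an arbitrarily small multiplicative factor. A secondary issue is to ensure measurability of all the pieces being integrated, which follows routinely from the Polish/Banach assumptions and continuity of $f^{n}(y|\cdot)$ invoked in Lemma \ref{lem:bayes}.
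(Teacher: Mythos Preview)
Your proposal is correct and follows essentially the same route as the paper: bound both the unnormalized numerator difference and the normalizer difference by $f^{n}(y|\hat\theta_n)\,\|\pi_k-\pi\|_{TV}$ via the sup of the likelihood, then combine through a quotient telescoping and absorb the residual $Z^n_k(y)/Z^n(y)$ factor into ``for big enough $k$''. The only cosmetic difference is that the paper works with the $\tfrac{1}{2}\max_{|h|\le 1}$ characterization of TV and writes the quotient split as $\bigl|b_k/Z^n_k - b/Z^n\bigr| \le \bigl(\tfrac{b}{Z^n}\tfrac{1}{Z^n}+\tfrac{1}{Z^n}\bigr) f^{n}(y|\hat\theta_n)\|\pi_k-\pi\|_{TV}$ directly, whereas you use the $\sup_A$ form and add/subtract $Z^n(y)^{-1}\int_A f^{n}\,\pi_k$; the resulting estimates are identical.
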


\begin{proof}
For $h$ measurable with $|h| \leq 1$ we have
\begin{eqnarray*}
\left| \int h(\theta) f^n( y | \theta) \pi_k(d\theta) - \int h(\theta) f^n ( y | \theta) \pi(d\theta) \right|  & = &
\left| \int h(\theta) f^n( y | \theta) (\pi_k - \pi)(d\theta) \right| \\
& \leq & \int | h(\theta) | f^n (y | \theta) |\pi_k -\pi |(d\theta) .
\end{eqnarray*}
Let $b_k = \int h(\theta) f^n( y | \theta) \pi_k(d\theta)$ and $b = \int h(\theta) f^n( y | \theta) \pi(d\theta)$, the above implies
$|b_k - b| \leq f^n( y | \hat{\theta}_n ) || \pi_k -\pi ||_{TV}$
$|Z^n_k(y) - Z^n(y)| < f^n( y | \hat{\theta}_n ) || \pi_k -\pi ||_{TV}$ and
\begin{eqnarray}\label{eqn:boundkest}
\left| \frac{b_k}{Z^n_k(y)} - \frac{b}{Z^n(y)} \right| & < &
\left( \frac{b}{Z^n(y)} \frac{1}{Z^n(y)} + \frac{1}{Z^n(y)} \right)  f^{n}(y | \hat{\theta}_n) || \pi_k - \pi ||_{TV} \\
& \leq & \frac{2}{Z^n(y)}    f^{n}(y | \hat{\theta}_n) || \pi_k - \pi ||_{TV} ,\nonumber 
\end{eqnarray}
since $\left| \frac{b}{Z^n(y)} \right| \leq 1$,
and we obtain the result. The prove involving $\Q_y[\pi_k]$ and $\Q_y[\pi]$ is analogous.
\end{proof}

\begin{theorem}[Consistent rate of convergence] \label{teo:tv_rate}
With assumption \ref{assp:1}, the rate of convergence in (\ref{eqn:global_error2}) and $|| \pi_k - \pi ||_{TV} \rightarrow 0$ we have
$|| \Q_y^n[\pi_k] - \Q_y[\pi] ||_{TV} \rightarrow 0$ as $k,n \rightarrow \infty$ and
\begin{equation}\label{eqn:consistency}
|| \Q_y^n[\pi_k] - \Q_y[\pi] ||_{TV}  < 
 \frac{K_1}{Z_k(y)}  |\alpha(n)|^p   + \frac{f (y | \hat{\theta})}{Z(y)} || \pi_k - \pi ||_{TV}  \\
\end{equation}
for big enough $k$ and $n$ (note that $Z_k(y), Z(y) >0$ and $Z_k(y) \rightarrow Z(y)$).
\end{theorem}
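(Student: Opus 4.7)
The plan is to obtain the bound by inserting the hybrid measure $\Q_y[\pi_k]$ (numerical likelihood replaced by the exact likelihood, but the truncated prior retained) and applying the triangle inequality in total variation:
$$
\| \Q_y^n[\pi_k] - \Q_y[\pi] \|_{TV} \leq \| \Q_y^n[\pi_k] - \Q_y[\pi_k] \|_{TV} + \| \Q_y[\pi_k] - \Q_y[\pi] \|_{TV}.
$$
The first term isolates the error due to discretizing the forward map while the prior is held fixed; the second term isolates the error due to perturbing the prior while the likelihood is held fixed. This is the cleanest decomposition because each piece matches exactly the form of one of the theorems already established.

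Next I would bound each summand directly by quoting the preceding theorems. For the first summand, Theorem \ref{teo:tvn_rate} applied with prior $\pi_k$ (legitimate under Assumption \ref{assp:1}) yields the bound $\frac{K_1}{Z_k(y)} |\alpha(n)|^p$. For the second summand, Theorem \ref{teo:tvk_rate} (second inequality) delivers $\frac{f(y | \hat\theta)}{Z(y)} \| \pi_k - \pi \|_{TV}$. Adding these gives precisely the stated inequality (\ref{eqn:consistency}).

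For the qualitative convergence statement, I would handle each term separately. The second term tends to $0$ as $k \to \infty$ because the hypothesis $\| \pi_k - \pi \|_{TV} \to 0$ is imposed and the prefactor $\frac{f(y | \hat\theta)}{Z(y)}$ is a fixed positive constant that does not depend on $k$ or $n$. For the first term, one uses that $Z_k(y) \to Z(y) > 0$ (a point already invoked in the parenthetical remark of the theorem, and which itself follows from the weak/TV convergence $\pi_k \Rightarrow \pi$ applied to the bounded continuous function $\theta \mapsto f(y|\theta)$). Hence for $k$ large enough, $Z_k(y)$ is bounded below by, say, $\tfrac{1}{2} Z(y) > 0$, and since $|\alpha(n)|^p \to 0$ as $n \to \infty$, the first term also tends to zero.

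There is really no technical obstacle here; the work has been done in Theorems \ref{teo:tvn_rate} and \ref{teo:tvk_rate}. The only small point to be careful about is the order of the limits: one needs $k$ large enough first so that $Z_k(y)$ is bounded below uniformly in $n$, and then let $n \to \infty$; alternatively one may take $k, n \to \infty$ jointly provided $k$ is large enough that $Z_k(y) \geq \tfrac{1}{2} Z(y)$. This is what is meant by the phrase ``for big enough $k$ and $n$'' in the statement, and it should be made explicit in a single line at the end of the proof.
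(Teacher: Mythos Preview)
Your proposal is correct and matches the paper's own proof essentially line for line: the paper also inserts $\Q_y[\pi_k]$, applies the triangle inequality, and invokes Theorems~\ref{teo:tvn_rate} and~\ref{teo:tvk_rate} for the two pieces. Your additional remarks on $Z_k(y)\to Z(y)$ and the order of limits are helpful elaborations but not substantive departures.
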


\begin{proof}
Note that $|| \Q_y^n[\pi_k] - \Q_y[\pi] ||_{TV} = || \Q_y^n[\pi_k] - \Q_y^{k} + \Q_y^{k} - \Q_y[\pi] ||_{TV} \leq 
|| \Q_y^n[\pi_k] - \Q_y^{k} ||_{TV} + || \Q_y^{k} - \Q_y[\pi] ||_{TV}$ and from theorems \ref{teo:tvn_rate} and \ref{teo:tvk_rate}
we obtain the result.
\end{proof}

\begin{corollary}
With assumption \ref{assp:1}, the rate of convergence in (\ref{eqn:global_error2}) and $|| \pi_k - \pi ||_{TV} \rightarrow 0$ we have
$\Q_y^n[\pi_k] \Rightarrow \Q_y[\pi]$.
\end{corollary}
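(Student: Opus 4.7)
The plan is extremely short: this corollary is a one-line consequence of Theorem~\ref{teo:tv_rate} together with the well-known implication ``TV convergence $\Rightarrow$ weak convergence.''

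First, I would invoke Theorem~\ref{teo:tv_rate} directly under its stated hypotheses (Assumption~\ref{assp:1}, the rate~\eqref{eqn:global_error2}, and $\|\pi_k - \pi\|_{TV} \to 0$) to conclude
\[
\| \Q_y^n[\pi_k] - \Q_y[\pi] \|_{TV} \longrightarrow 0 \quad \text{as } k,n \to \infty.
\]
This is the substantive input; the rest is just translating TV convergence into weak convergence.

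Second, I would observe that the equivalent characterization of TV given in the introduction,
\[
\| \mu_1 - \mu_2 \|_{TV} = \tfrac{1}{2} \max_{|h|\le 1} \left| \int h(x)\,\mu_1(dx) - \int h(x)\,\mu_2(dx) \right|,
\]
immediately yields weak convergence. Indeed, for any bounded, non-negative, continuous $f:\Theta \to \R$ with $\|f\|_\infty \le M$, setting $h = f/M$ gives $|h| \le 1$ and hence
\[
\left| \int f\,d\Q_y^n[\pi_k] - \int f\,d\Q_y[\pi] \right| \le 2M \, \| \Q_y^n[\pi_k] - \Q_y[\pi] \|_{TV} \longrightarrow 0,
\]
which matches the definition of weak convergence $\Q_y^n[\pi_k] \Rightarrow \Q_y[\pi]$ used in the paper.

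There is no real obstacle here — both ingredients (Theorem~\ref{teo:tv_rate} and the TV-to-weak implication) have been set up explicitly earlier in the paper, so the only thing worth writing down is the normalization $h = f/\|f\|_\infty$ that reduces an arbitrary bounded continuous test function to one with $|h|\le 1$.
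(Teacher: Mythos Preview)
Your proposal is correct and matches the paper's intent: the corollary is stated without proof immediately after Theorem~\ref{teo:tv_rate}, precisely because it is the one-line consequence you describe (TV convergence from Theorem~\ref{teo:tv_rate} plus the standard implication TV $\Rightarrow$ weak). Your normalization step $h = f/\|f\|_\infty$ makes the implicit argument explicit and is exactly what is needed.
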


\subsection{Remarks on Theorems \ref{teo:tvn_rate}, \ref{teo:tvk_rate} and \ref{teo:tv_rate}}

\begin{itemize}

\item The ``posterior operator'' is Lipschitz continuos, that is
$$
|| \Q_y[\pi_k] - \Q_y[\pi]||_{TV} <   \frac{f (y | \hat{\theta})}{Z(y)} || \pi_k - \pi ||_{TV} .
$$

\item If the rate of convergence of the truncated prior $\pi_k$ to the complete prior $\pi$ is $|| \pi_k - \pi ||_{TV} < k^{-q}$
then, since $ [Z_k(y)]^{-1} \rightarrow [Z(y)]^{-1} > 0$,
$$
|| \Q_y^n[\pi_k] - \Q_y[\pi] ||_{TV} < K_2 n^{-p} + K_2' k^{-q}
$$
(with $|\alpha(n)|^{p} = O(n^p)$). That is, the discretized version of the posterior converges in total variation to
the theoretical posterior at the same rate as the FM and the prior truncation.

\item In many cases of PDE discretization schemes, the number of parameters or dimension of the prior $k$
increases (linearly, quadratically etc.) with the discretization size $n$ as it is the case
in some inverse problems using the Finite Element Method \cite[eg.][]{Bui-Thanh2013, Petra2014}.
In principle this should not represent an additional problem
and the consistency result in (\ref{eqn:consistency}) still holds for big enough
$n$ as far as $|| \pi_k - \pi ||_{TV} \rightarrow 0$.

\end{itemize}

\subsection{Posterior Estimates}

In modern Bayesian theory all inference problems are viewed in a perspective of a decision under uncertainty,
ultimately needing to maximize posterior expected utility,
which is in fact the Bayesian paradigm.  Moreover,
all utility functions are bounded and by convention normalized to $[0,1]$ \citep{DeGroot1970}.
If one wants to calculate the posterior expectation of an utility function, or any other bounded functional,
$h \in [0,1]$ note that
$$
| \hat{h}^{n,k} - \hat{h} | \leq || \Q_y^n[\pi_k] - \Q_y[\pi] ||_{TV}
<  \frac{K_1}{Z_k(y)}  |\alpha(n)|^p   + \frac{f (y | \hat{\theta})}{Z(y)} || \pi_k - \pi ||_{TV} .
$$
where $\hat{h}^{n,k} = \int h(\theta) \Q_y^n[\pi_k](d\theta)$ and $\hat{h} = \int h(\theta) \Q_y(d\theta)$.
That is, controlling $ || \Q_y^n[\pi_k] - \Q_y[\pi] ||_{TV}$ will bound the error in any estimation required and
the rates of convergence are transferred. (In passing, note from the prove of theorem \ref{teo:tvn_rate}, that is lemma \ref{lemm:conv_post1},
that $\frac{K_1}{Z_k(y)}  |\alpha(n)|^p$ is the bound for
$\frac{|Z^n_k(y) - Z_k(y)|}{Z_k(y)} = \left| \frac{Z^n_k(y)}{Z_k(y)} - 1 \right|$.)

Traditionally we are used to working with the posterior mean and/or variance.
In that case, $h$ is not bounded.  However, if $h$ is continuos and
the $\hat{h}^{n,k}$ are \textit{uniformly integrable}
then $\hat{h}$ exists and $\hat{h}^{n,k} \rightarrow \hat{h}$.  This can be verified if
\begin{equation}\label{eqn:uni_int}
\sup_{n,k} \int |h(\theta)|^{1+\epsilon} \Q_y^n[\pi_k](d\theta) < \infty .
\end{equation}
for some positive $\epsilon$ \citep[][chap. 2]{Billingsley1968}.  For example if the tails
of the finte dimensional posterior decay exponentially then
$s^2_{n,k}  = \int h^2(\theta) \Q_y^n[\pi_k](d\theta) < \infty$, needing only to verify that
these $s^2_{n,k}$ are bounded.


\section{Expected a priori bounds and Bayes Factors}\label{sec:EABF}

As in \cite{Capistran2016} in order to find reasonable guidelines to choose
a discretization level $n$ and a suitable prior truncation $k$, we compare the numeric posterior $\Q_y^n[\pi_k]$
with the theoretical posterior $\Q_y[\pi]$ using Bayesian model selection, namely Bayes Factors (BF).
Assuming an equal prior probability for both models, the BF is the posterior odds of one model against the other,
that is $\frac{p}{1-p}$ where $p=\frac{ Z^n_k(y) }{ Z^n_k(y) +  Z(y) }$, the posterior probability
of the numerical model.  That is, the BF is the ratio of the normalization constants $\frac{Z^n_k(y)}{Z(y)}$.
 In terms of model equivalence
an alternative expression conveying the same odds is
$$
\frac{1}{2} \left| \frac{Z^n_k(y)}{Z(y)} - 1 \right| .
$$
We now try to control the Bayes Factor between the discretized model and the theoretical model,
$\frac{Z^n_k(y)}{Z(y)}$, through the use of the Absolute BF (ABF).
In order to do that, independently of the specific data at hand, we try to bound the expected ABF (the EABF),
$$
\int \frac{1}{2} \left| Z^n_k(y) - Z(y) \right| \lambda(dy) =\int \frac{1}{2} \left| \frac{Z^n_k(y)}{Z(y)} - 1 \right| Z(y) \lambda(dy) ,
$$
in terms of estimates on the error in the numeric forward map, as in (\ref{eqn:global_error1}).  The idea is
to keep the EABF below a small threshold (eg. $\frac{1}{20}$) so that the BF is close to 1 and the difference between
the numeric and the theoretical model is ``not worth more than a bare mention''  \citep{KASS1995, Jeffreys61}.

\begin{theorem}\label{teo:eabf1}
With assumption \ref{assp:1},
the rate of convergence in (\ref{eqn:global_error2}), $|| \pi_k -\pi ||_{TV} \rightarrow 0$
and $\phi_y( \eta ) = -\log f_o(y | \eta ) \in C^1$ $\lambda$-a.s. we have
\begin{eqnarray}\label{eqn:EABF1}
& & \int \frac{1}{2} \left| \frac{Z^n_k(y)}{Z(y)} - 1 \right| Z(y) \lambda(dy) < \\ \nonumber
& & \frac{K_0 |\alpha(n)|^{p}}{2} \sum_{i=1}^m
\int \int \left| \frac{\partial}{\partial \eta_i} \phi_y( \H(\F^{\alpha(n)}(\theta)) ) \right|
f_o( y | \H(\F^{\alpha(n)}(\theta))) \lambda(dy) \pi_k(d\theta) \\
& &+ || \pi_k -\pi ||_{TV} \nonumber.
\end{eqnarray}
\end{theorem}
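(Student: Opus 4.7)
The plan is to bound the EABF by the triangle inequality, routing through the intermediate partition function $Z_k(y) = \int f(y|\theta)\pi_k(d\theta)$ (theoretical likelihood against the truncated prior), so that
$$
\tfrac{1}{2}\int |Z^n_k(y) - Z(y)|\lambda(dy) \leq \tfrac{1}{2}\int|Z^n_k(y) - Z_k(y)|\lambda(dy) + \tfrac{1}{2}\int |Z_k(y) - Z(y)|\lambda(dy).
$$
The first term on the right will absorb the FM discretization error and produce the double integral in (\ref{eqn:EABF1}); the second term will produce the $\|\pi_k - \pi\|_{TV}$ summand.

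For the prior-truncation term I would write $Z_k(y) - Z(y) = \int f(y|\theta)(\pi_k-\pi)(d\theta)$ and apply the signed-measure inequality $|\int f\,d\mu|\leq \int f\,d|\mu|$, then swap integrals by Tonelli (valid since $f(y|\theta)\geq 0$ and $\int f(y|\theta)\lambda(dy) = 1$) to obtain
$$
\int |Z_k(y) - Z(y)|\lambda(dy) \leq \int\!\!\int f(y|\theta)\lambda(dy)\,d|\pi_k-\pi|(\theta) = |\pi_k-\pi|(\Theta) = 2\|\pi_k-\pi\|_{TV},
$$
the last equality holding because $(\pi_k-\pi)$ has zero total mass. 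The factor $\tfrac{1}{2}$ in front then yields exactly $\|\pi_k-\pi\|_{TV}$.

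For the FM-discretization term, set $\eta = \H(\F(\theta))$ and $\eta^n = \H(\F^{\alpha(n)}(\theta))$. The $C^1$ hypothesis on $\phi_y$ gives $\nabla_\eta f_o(y|\eta) = -f_o(y|\eta)\nabla\phi_y(\eta)$, so a first-order expansion of $f_o(y|\cdot)$ about $\eta^n$ produces
$$
|f^n(y|\theta) - f(y|\theta)| = |f_o(y|\eta^n) - f_o(y|\eta)| \leq \sum_{i=1}^m |\partial_i\phi_y(\eta^n)|\,f_o(y|\eta^n)\,|\eta_i - \eta^n_i| + o(\|\eta - \eta^n\|).
$$
Applying the global bound (\ref{eqn:global_error1}) componentwise, $|\eta_i-\eta^n_i| \leq K_0|\alpha(n)|^p$, absorbing the $o(\cdot)$ remainder for big enough $n$, using $|Z^n_k(y) - Z_k(y)| \leq \int |f^n - f|\pi_k(d\theta)$, and finally swapping the $y$ and $\theta$ integrals by Tonelli, produces exactly the double integral term appearing in (\ref{eqn:EABF1}) with its $\tfrac{1}{2}K_0|\alpha(n)|^p$ prefactor.

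The delicate step is the linearisation: the bound in the statement evaluates $\partial_i\phi_y$ and $f_o$ \emph{at the numerical point} $\eta^n$, not at an intermediate mean-value point, so a straight MVT does not suffice and one must genuinely work with a Taylor expansion and control the remainder --- this is what forces the ``for big enough $n$'' reading of the inequality (consistent with the phrasing used in Theorems \ref{teo:tvn_rate}--\ref{teo:tvk_rate}). Beyond this, measurability in $(y,\theta)$ of all integrands --- needed to justify Tonelli --- is routine under Assumption \ref{assp:1} via the continuity of $\H\circ\F^{\alpha(n)}$, and positivity of $Z(y)$ $\lambda$-a.s.\ (from Lemma \ref{lem:bayes}) is used implicitly when rewriting the EABF as in the left-hand side of (\ref{eqn:EABF1}).
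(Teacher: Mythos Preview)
Your proposal is correct and follows the same overall architecture as the paper: split via the triangle inequality through $Z_k(y)$, handle the prior-truncation piece by Tonelli to produce $\|\pi_k-\pi\|_{TV}$, and handle the FM-discretization piece by a first-order expansion together with the componentwise bound (\ref{eqn:global_error1}), dropping the higher-order remainder. The only substantive difference is in the linearisation step. The paper factors $f^n-f = f\,(R_n-1)$ with $R_n=f^n/f$, invokes the approximation $|R_n-1|\cong|\log R_n|=|\phi_y(\eta)-\phi_y(\eta^n)|$, and then Taylor-expands $\phi_y$ about the numerical point $\eta^n$; this route introduces an extra $|x-1|\approx|\log x|$ step and, taken literally, leaves the exact $f_o(y\mid\eta)$ rather than $f_o(y\mid\eta^n)$ in front of the gradient. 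Your route---expanding $f_o$ directly about $\eta^n$ via $\nabla f_o=-f_o\nabla\phi_y$---is one approximation shorter and lands immediately on $f_o(y\mid\eta^n)$ as written in (\ref{eqn:EABF1}). Both arguments are at the same level of rigor (the remainder is discarded in either case), so the difference is cosmetic, but your version is arguably the cleaner derivation of the stated bound.
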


\begin{proof}
As seen in the proof of theorem \ref{teo:tvk_rate} we have
$$
|Z_k(y) - Z(y)|  = 
\left| \int f( y | \theta) (\pi_k - \pi)(d\theta) \right| \\
\leq \int f (y | \theta) |\pi_k -\pi |(d\theta) 
$$
and therefore $\int |Z_k(y) - Z(y)| \lambda(dy)  \leq 
\int \int f (y | \theta) |\pi_k -\pi |(d\theta) \lambda(dy) = \int \int  f (y | \theta) \lambda(dy) \allowbreak |\pi_k -\pi |(d\theta) =
2 || \pi_k -\pi ||_{TV}$. Therefore
\begin{eqnarray*}
& & \int \frac{1}{2} \left| \frac{Z^n_k(y)}{Z(y)} - 1 \right| Z(y) \lambda(dy) \leq \\
& & \int \frac{1}{2} \left| Z^n_k(y) - Z_k(y) \right| \lambda(dy) + || \pi_k -\pi ||_{TV}.
\end{eqnarray*}
To bound the last integral, note that
$$
\left| Z^n_k(y) - Z_k(y) \right| = \left| \int f( y | \theta)(R_n(\theta) -1) \pi_k(d\theta) \right|;
R_n(\theta) = \frac{f^n(y | \theta)}{f(y | \theta)} .
$$
For $\eta$ close enough to $\eta_1$, the a likelihood ratio $\frac{f_o( y | \eta)}{f_o( y | \eta_1)}$ is near to 1 and
$$
\left| \frac{f_o( y | \eta)}{f_o( y | \eta_1)} - 1 \right| \cong \left| \log \left( \frac{f_o( y | \eta)}{f_o( y | \eta_1)} \right) \right|
= \left| \phi_y( \eta ) - \phi_y( \eta_1 ) \right| = | \phi_y(\eta_1)-\phi_y(\eta)| 
$$
With the first order Taylor approximation of $\phi_y(\eta)$ around $\eta_1$ we have 
$$
| R_n(\theta) -1| =  | \phi_y(\eta_1)-\phi_y(\eta)| =\left | \nabla \phi_y ( \H(\F^{\alpha(n)}(\theta)) ) \cdot ( \H(\F^{\alpha(n)}(\theta)) -  \H(\F(\theta))) + R \right | .
$$
Ignoring the higher order terms in the residual and using the error bound in (\ref{eqn:global_error1}) we have
\begin{eqnarray*}
& & \int \frac{1}{2} \left| Z^n_k(y) - Z^n(y) \right| \lambda(dy) < \\
& & \frac{K_0 |\alpha(n)|^{p}}{2} \int \int f_o( y | \eta) || \nabla \phi_y ( \H(\F^{\alpha(n)}(\theta)) ) ||_1 \pi_k(d\theta) \lambda(dy) ,
\end{eqnarray*}
since for any two vectors $|a \cdot b| = |\sum a_i b_i| < c \sum | b_i |$ with $|a_i| < c$ and we obtain the result.
\end{proof}

We may attempt to calculate the remaining double integral by changing the order of integration letting
$\int M( \H(\F(\theta)) ) \pi_k(d\theta) $ and
\begin{equation}\label{eqn:int_eta}
M(\eta) = \int || \nabla \phi_y ( \eta ) ||_1 f_o( y | \eta) \lambda(dy) =
\sum_{i=1}^m \int \left| \frac{\partial}{\partial \eta_i} \phi_y( \eta ) \right| f_o( y | \eta) \lambda(dy) .
\end{equation}
This in general is difficult to achieve, however it is possible if it happens that $M(\eta)$ does not depend
on $\theta$.

In the usual case of independent Gaussian errors with known variance $\sigma^2$,
$|| \nabla \phi_y ( \eta ) ||_1 \allowbreak = \sigma^{-1} \sum_{i=1}^m \left| \frac{y_i - \eta_i}{\sigma} \right|$ and
$
\sum_{i=1}^m \int \left| \frac{y_i - \eta_i}{\sigma} \right| N( y_i | \eta_i, \sigma) dy_i = \sqrt{\frac{2}{\pi}} \frac{m}{\sigma},
$
since $\int |x| \frac{1}{\sqrt{2\pi}} e^{-\frac{x^2}{2}} dx = \sqrt{\frac{2}{\pi}}$.
This result may be generalized to any location-scale family and we present it next. 

\begin{theorem}\label{teo:eabf2}
With the setting of theorem \ref{teo:eabf1}, assuming independent data arising from a location-scale family,
namely
$$
f_o( y | \eta) = \prod_{i=1}^m \sigma^{-1} \rho\left( \frac{y_i - \eta_i}{\sigma} \right)
$$
with $\rho$ a bounded $C^1$ symmetric Lebesgue density in $\R$ with $\int_{-\infty}^{\infty} x^2 \rho(x) dx = 1$
then
\begin{equation}\label{eqn:EABF2}
\int \frac{1}{2} \left| \frac{Z^n_k(y)}{Z(y)} - 1 \right| Z(y) \lambda(dy) <
\rho(0) \frac{K_0 |\alpha(n)|^{p}}{\sigma}  m +  || \pi_k -\pi ||_{TV} .
\end{equation}
\end{theorem}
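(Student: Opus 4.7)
The plan is to invoke Theorem \ref{teo:eabf1} and then evaluate the double integral on the right-hand side of (\ref{eqn:EABF1}) explicitly, exploiting the translation-invariance of the location-scale family so that the $\theta$-dependence drops out. Since the bound (\ref{eqn:EABF1}) already isolates the prior-truncation contribution $\|\pi_k-\pi\|_{TV}$, I only need to show that the sum of double integrals equals $2m\rho(0)/\sigma$, so that multiplication by $\tfrac{1}{2}K_0|\alpha(n)|^p$ yields the first summand of (\ref{eqn:EABF2}).

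First I would compute the partial derivatives of $\phi_y(\eta)=-\log f_o(y|\eta)=m\log\sigma-\sum_{i=1}^m\log\rho\bigl((y_i-\eta_i)/\sigma\bigr)$. The chain rule gives
$$\frac{\partial}{\partial\eta_i}\phi_y(\eta)=\frac{1}{\sigma}\,\frac{\rho'(u_i)}{\rho(u_i)},\qquad u_i=\frac{y_i-\eta_i}{\sigma},$$
so the integrand of the $i$-th term in (\ref{eqn:EABF1}) factorizes across coordinates. Next, performing the change of variables $x_j=(y_j-\eta_j)/\sigma$ (whose Jacobians absorb the $\sigma^{-1}$ normalizations) and integrating out the $m-1$ coordinates $j\neq i$ against the density $\rho$, each of those marginal integrals equals $1$. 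What remains is
$$\int \left|\frac{\partial}{\partial\eta_i}\phi_y(\eta)\right| f_o(y|\eta)\,\lambda(dy)=\frac{1}{\sigma}\int_{-\infty}^{\infty}|\rho'(x)|\,dx,$$
which is independent of $\eta$, hence independent of $\theta$; this is precisely what makes the double integral over $(y,\theta)$ collapse, since $\pi_k$ is a probability measure.

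The remaining task is to evaluate $\int|\rho'(x)|\,dx$. I would use that $\rho$ is a symmetric $C^1$ density whose second moment is finite and that (as the canonical examples in play satisfy) $\rho$ attains its maximum at $0$, so $\rho'\geq 0$ on $(-\infty,0]$ and $\rho'\leq 0$ on $[0,\infty)$. Because $\rho$ is an integrable continuous density, $\rho(x)\to 0$ as $|x|\to\infty$, whence by the fundamental theorem of calculus
$$\int_{-\infty}^{0}\rho'(x)\,dx=\rho(0),\qquad \int_{0}^{\infty}(-\rho'(x))\,dx=\rho(0),$$
giving $\int|\rho'(x)|\,dx=2\rho(0)$. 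Summing over $i=1,\ldots,m$ then yields $\sum_i\int\int|\partial_{\eta_i}\phi_y|f_o\,d\lambda\,d\pi_k=2m\rho(0)/\sigma$, and substituting into (\ref{eqn:EABF1}) produces (\ref{eqn:EABF2}).

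The main obstacle is the identity $\int|\rho'|\,dx=2\rho(0)$: without an explicit unimodality hypothesis on $\rho$ one can only conclude that this integral equals the total variation of $\rho$ on $\mathbb{R}$, which may exceed $2\rho(0)$. For every example of interest here (Gaussian, Student's $t$, logistic, etc.) unimodality is automatic, and I would either state it as an implicit regularity condition on $\rho$ or replace $\rho(0)$ in the conclusion by $\tfrac{1}{2}\int|\rho'(x)|\,dx$. Beyond that, the remaining work is bookkeeping: verifying that Fubini applies when swapping the $y$- and $\theta$-integrals (justified by non-negativity of the integrand together with Tonelli's theorem, just as in Lemma \ref{lem:joint}) and that the first-order Taylor approximation of $\phi_y$ used in Theorem \ref{teo:eabf1} is uniformly controlled along the image of $\H\circ\F^{\alpha(n)}$ for $n$ large, which follows from Assumption \ref{assp:1} and (\ref{eqn:global_error1}).
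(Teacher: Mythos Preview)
Your proposal is correct and follows essentially the same route as the paper: both start from (\ref{eqn:EABF1}), reduce the inner integral to $\sigma^{-1}\int_{\R}|\rho'(x)|\,dx$ by a change of variables (the paper writes $\rho=e^{V}$ so that $V'\rho=\rho'$, which is exactly your $\rho'/\rho$), and then use symmetry plus the fundamental theorem of calculus to obtain $2\rho(0)$. Your discussion is in fact a bit more careful than the paper's, which silently drops the absolute value on $V'$ over $(0,\infty)$ and thereby makes the same implicit unimodality assumption you flag; your remark that without it the correct constant is $\tfrac12\int|\rho'|$ rather than $\rho(0)$ is well taken.
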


\begin{proof}
From (\ref{eqn:int_eta}) note that
$$
\int \left| \frac{\partial}{\partial \eta_i} \phi_y( \eta ) \right| f_o( y | \eta) \lambda(dy) = 
\int_{-\infty}^{\infty} \left| \sigma^{-1} V'\left( \frac{y_i - \eta_i}{\sigma} \right) \right| \sigma^{-1}
\rho\left( \frac{y_i - \eta_i}{\sigma} \right) dy_i
$$
where $\rho(x) = e^{V(x)}$.  The integral on the rhs is in fact equal to
$2 \sigma^{-1} \int_{0}^{\infty} V'(x) \rho(x) dx = 2 \sigma^{-1} \rho(0)$ (since $\rho'(x) = V'(x) \rho(x)$),
and we obtain the result.
\end{proof}

Since $K_0 |\alpha(n)|^{p}$ is the error in the FM (with the observation operator in (\ref{eqn:global_error1})),
measured in the same units as the $y_j$s, note from (\ref{eqn:EABF2}) that $\frac{K_0 |\alpha(n)|^{p}}{\sigma}$ is the relative error in the numeric FM  with respect to the standard error in the observations $\sigma$.
In order to keep the EABF below a threshold we require more precision in the FM if the sample size $m$
increases and more (less) precision in the FM if the standard error decreases (increases).
It makes much sense to measure $K_0 |\alpha(n)|^{p}$ with respect to $\sigma$ and $\frac{K_0 |\alpha(n)|^{p}}{\sigma}$ becomes
units free.

If we let the $EABF \leq b$, and for example  $b= \frac{1}{20} = 0.05$,
we expect nearly no difference in the numerical and the
theoretical posterior.  If we set the error in the FM $K = K_0 |\alpha(n)|^{p}$ then we require
$\rho(0) \frac{K}{\sigma}  m +  || \pi_k -\pi ||_{TV} < b $, that is,
we need the numerical error in the FM in (\ref{eqn:global_error1})
\begin{equation}\label{eqn:main_bound}
K < \frac{\sigma}{m} \frac{b - || \pi_k -\pi ||_{TV}}{\rho(0)} .
\end{equation}
We require $|| \pi_k -\pi ||_{TV} < b$, but since this only involves the prior truncation
we should be able to fix it from the onset.  For example, $|| \pi_k -\pi ||_{TV} < \frac{1}{100}$.

Our suggested procedure is to run the solver, including an after the fact error estimate
(or \textit{a posteriori} error estimate, we use \textit{after the fact} given the conflict of terms with the Bayesian jargon).
If the error in the FM does not comply with the bound in (\ref{eqn:main_bound}), then run the solver again
with a finer discretization $\alpha(n)$.  In passing, we assure (\ref{eqn:global_error1}) for all $\theta \in \Theta$.
Note that in ODEs the RK45 method \citep[Rungue-Kutta order 5 method of][for example]{Cash1990} produces
after the fact error estimates.  More recently, the discontinuos Galerking  method for PDEs may include
high order solvers with after the fact error estimates \citep{di2011mathematical, hesthaven2007nodal}.
In general, error estimates for PDEs are much harder to obtain and the usual strategy is to
consider adjoint-base methods.

\section{Using a base for $\Theta$}\label{sec:base}

Defining a prior directly on the Banach space $\Theta$ is difficult and we have little options, as for example
an infinite dimension Gaussian distribution \citep{Stuart2010}.  A perhaps more pragmatic approach is to decide
on a base for $\Theta$ to represent its elements, and then take the coefficients in the base representation as random,
as in \cite{Scheichl2017}.  Accordingly, let $\Theta = C(D)$ be the continuous functions on a compact domain
$D \subset \R$ with norm $|| \cdot ||$ which can be $L_2$ for example.  This indeed constitutes a
separable Banach space.  Let, for any $\theta \in \Theta$
\begin{equation}\label{eqn:base_exp}
\theta(t) = \theta_0(t) + \sum_{i=1}^\kappa \beta_i \phi_i(t) ,
\end{equation}
where $\phi_i$ are our chosen base, $\beta_i \in \R$ and $\theta_o \in \Theta$ is fixed.  We take the base functions
normalized $|| \phi_i || = 1$.  Let $\kappa$ be a discrete random variable and $\beta_1, \beta_2, \ldots$ be random variables in $\R$,
then a probability measure on $\N \times \R^{\infty}$ defines the distribution $F$ of $(\kappa, \beta_1, \beta_2, \ldots )$
and the prior distribution $\pi$ will be the push forward measure over the function
$$
\kappa, \beta_1, \beta_2, \ldots    \mapsto^g \theta_0(t) + \sum_{i=1}^\kappa \beta_i \phi_i(t) .
$$
The marginal distribution $F_k$ of the first $k$ terms, which is its $k$th natural projection,
defines the push forward measure $\pi_k$ from
$\theta_k(t) = \theta_0(t) + \sum_{i=1}^{\min(k, \kappa)} \beta_i \phi_i(t)$,
 which is our truncated approximate prior.

With lemma 2.1 of \cite{Rosalsky1997}, on convergence of random elements in Banach spaces,
we have that if
$$
\sum_{i=1}^\infty E[||\beta_i \phi_i ||] = \sum_{i=1}^\infty E|\beta_i| < \infty
$$
then there exists $\theta(t) \in \Theta$ such that
$$
\theta_0(t) + \sum_{i=1}^k \beta_i \phi_i(t) \rightarrow \theta(t) ~\text{a.s.}
$$
This implies $\theta_k  \rightarrow \theta$ in probability and therefore $\theta_k  \Rightarrow \theta$.
Since $F_k \Rightarrow F$ (the $F_k$s are the finte dimensional marginals) and $g$ is continuous,
by the mapping theorem it also implies $\pi_k \Rightarrow \pi$  \citep{Billingsley1968}.

With this we have $\Q_y^n[\pi_k] \Rightarrow \Q_y^n[\pi]$ and
$\Q_y[\pi_k] \Rightarrow \Q_y[\pi]$ as in theorem \ref{teo:tvk_rate} and note that so far the $\beta_i$s need not be independent.
The only requirement here is
\begin{equation}\label{eqn:assumption2}
\sum_{i=1}^\infty E|\beta_i| < \infty .
\end{equation}


To control the rate of convergence we requiere convergence in Total Variation.
From the coupling characteristic of the Total Variation norm  \citep{Gibbs2002} 
$|| \pi_k -\pi ||_{TV} \leq P(\theta_k \neq \theta)$ and therefore we have
\begin{equation}\label{eqn:TV_bound}
|| \pi_k -\pi ||_{TV} \leq P(\kappa > k) .
\end{equation}
Let $h(\cdot)$ be the prior for $\kappa$, then $P(\kappa > k) = \sum_{i=k+1}^\infty h(i)$.

A typical choice for $h(i)$ would be a Poisson distribution with parameter $\lambda$ then
$\sum_{i=k+1}^\infty e^{-\lambda} \frac{\lambda^i}{i !}$. 
For example, if a priori the average number of terms in (\ref{eqn:base_exp}) is $\lambda=10$ then
with $k=20$, $\sum_{i=k+1}^\infty e^{-\lambda} \frac{\lambda^i}{i !} < \frac{1}{100}$.

From (\ref{eqn:EABF2}) we see that the overall EABF bound in this case is
\begin{equation}\label{eqn:EABF3}
\int \frac{1}{2} \left| \frac{Z^n_k(y)}{Z(y)} - 1 \right| Z(y) \lambda(dy) <
\rho(0) \frac{K}{\sigma} m + \sum_{i=k+1}^\infty h(i) 
\end{equation}
with $||  \H(\F(\theta)) - \H(\F^{\alpha(n)}(\theta)) || < K$.

\subsection{The discretized numeric posterior}

To be able to work on our posterior distribution we need to truncate the prior of $\kappa$ below
some maximum $k$, thus implicitly truncating the prior $\pi$ to $\pi_k$.
At the end we are left to deal with the varying dimensional posterior, with maximum dimension $k$
\begin{eqnarray*}
\pi( \beta_1, \ldots, \beta_l, l | y) & \propto & \\
&& \sigma^{-m}
\prod_{j=1}^m \rho \left( \frac{y_j - \H_j(\F^{\alpha(n)}( \theta_0(t) + \sum_{i=1}^l \beta_i \phi_i(t) ))}{\sigma} \right) \\
&& 1(l \leq k) h(l) ,
\end{eqnarray*}
subject to $||  \H(\F(\theta)) - \H(\F^{\alpha(n)}(\theta)) || < K = \frac{\sigma}{m} \frac{b - \sum_{i=k+1}^\infty h(i)}{\rho(0)}$ (eg. $b = \frac{1}{20}$).

At this point we have two options, we may work with the full model with $l=k$ or take $l$
also as a parameter to be inferred.  The latter has the great advantage in that the posterior
will select the ``effective dimension'' \cite{Palafox2015} of our model although is far more computational demanding than the former.  For the sheer complexity of the FMs, we leave $l=k$ fixed
in examples~\ref{sec:1d_pois} and~\ref{sec:2d_pois}.

When $l$ is also a parameter we may run an MCMC for each $l \leq k$.  The posterior probability 
of each $l$ can be obtained estimating the normalization constant given $l$.
This is a difficult estimation processes \citep{Valpine2008, Palafox2015},
but in some cases of near Gaussian
posteriors the normalization constants are easier to obtain; this approach is used in
example~\ref{sec:hugo}.

A different approach is to use a transdimensional MCMC (as RJMCMC) to include
$l$ in the MCMC process.  This we do in example~\ref{sec:deconv}.

\section{Examples}\label{sec:examples}

We first review some representative Bayesian UQ examples that recently appeared in
the literature and briefly view them in the perspective of our results.  Second in
sections \ref{sec:hugo}, \ref{sec:deconv}, \ref{sec:1d_pois} and \ref{sec:2d_pois} we present workout
examples considering Bayesian UQ problems for a 1D wave equation,
deconvolution and 1D and 2D heat equations, respectively.

\bigskip
EXAMPLE 1: In \cite{Lassas2004} and \cite{Kolehmainen2012} the parameter space $\Theta$ is the space of continuous functions in
the unit interval $C[0,1]$.  For piecewise linear continuos functions on $[0,1]$ the ``total variation'' prior
is proposed to be used, for a discretization $k$
$$ 
\pi_k(u) = c_k \exp\left\{ - \alpha_k \sum_{j=1}^{k+1} |u_j^k - u_{j-1}^k | \right\} ,
$$
where $u(t) = u_{j-1}^k + (t - t_{j-1}) \frac{u_j^k - u_{j-1}^k}{t_j - t_{j-1}}; t \in (t_{j-1},t_j]$ and $t_j = \frac{j}{k+1}$.
Inconsistencies are found in the MAP and CM estimators (the maximum of the posterior and the posterior mean)
when $\alpha_k = 1$ or $\alpha_k = \sqrt{k + 1}$ and $k \rightarrow \infty$.  

A clear problem with this approach is that we do not know which is the prior $\pi$ on $C[0,1]$, what is the measurable
space and how $\pi_k$ converges to $\pi$, if at all converges.
How can we expect consistency without the latter? 
Defining a probability measure on $C[0,1]$ is a complex and delicate endeavour \citep[][chap. 2]{Billingsley1968}
and is indeed a source of classic results in probability (eg. the Weiner process is a measure on $C[0,1]$).

\bigskip\noindent
EXAMPLE 2:  \cite{Scheichl2017} worked with a continuos FM with Gaussian errors,
derived from an elliptic PDE, with error bounds equivalent to (\ref{eqn:global_error1}).
The posterior is needed to be defined in a functional space, a separable Banach space.
This is sufficient for
Assumption \ref{assp:1} to hold.
They use a base expansion as in (\ref{eqn:base_exp}) with independent and
summable $\beta_i$s.  The latter is sufficient for weak convergence, beyond their specific prior
for the $\beta_i$s.  Therefore,  the results in section~\ref{sec:weak} apply.

\bigskip\noindent
EXAMPLE 3: \cite{Christen2017} considered a two dimensional inverse problem of
the logistic ODE. The FM is indeed continuos, seen from the analytic solution
$X(t) = \frac{KX_0}{X_0 + (K-X_0)e^{- r t}}$.  They consider Gaussian errors
and a Rungue-Kutta method of order 5, with error bounds similar to
(\ref{eqn:global_error1}).  Therefore, lemmas \ref{lem:joint} and \ref{lem:bayes} and
consistency theorems \ref{teo:weak1} and \ref{teo:tvn_rate} apply.

They used a RK45 to solve the ODE and obtain error estimates; these were larger
than the actual errors also available from comparison from the analytic solution.  The bound
for the numeric solver in (\ref{eqn:main_bound}) is kept adaptively for EABF $< \frac{1}{20}$
(no prior truncation is needed),  and also a fine grid solver was use.  The adaptive solver gave
posterior distributions basically indistinguishable to those obtained by the fine solver, with
more than 90\% CPU time save.

\bigskip\noindent
EXAMPLE 4:  \cite{Christen2017} also considered a FM arising from the Burgers' PDE
in a two dimensional Bayesian inverse problem with Gaussian errors.
The FM is indeed continuos, seen again from the analytic solution
The authors used a second-order accurate finite-volume solver with error bounds as in
(\ref{eqn:global_error1}).  Again lemmas \ref{lem:joint} and \ref{lem:bayes} and
consistency theorems \ref{teo:weak1} and \ref{teo:tvn_rate} apply.

More importantly, they kept adaptively EABF $< \frac{1}{20}$ and compared with a finer
solver obtaining a 60\% save in CPU time.  The resulting posteriors where indistinguishable
for all practical purposes.

\bigskip\noindent
EXAMPLE 5: In \cite{Capistran2012} and inverse problem in epidemics, driven by a system of ODEs,
is analyzed with a Generalized Discrete distribution model for the data
\citep[a combination of Binomial, Poisson and Negative-Binomial distributions, see][]{Capistran2011}.
This discrete family can be seen to produce continuos likelihoods for independent data and
since these are pmf's the likelihood is always below or equal to 1.  Using standard results
on the continuity of solutions of ODE over parameters the FM may be proved to be continuos.

\bigskip\noindent
EXAMPLE 6: \cite{Bui-Thanh2013} and \cite{Petra2014} work with an infinite dimensional Bayesian inverse problem,
using a Gaussian prior in a $L^2$ functional space with possibly correlated Gaussian data.  The FM is assumed continuous and
therefore the existence lemmas \ref{lem:joint} and \ref{lem:bayes} apply.  The authors suggest using a Langrange basis
functions to represent the elements of $\Theta$ as in (\ref{eqn:base_exp}) in a Finite Element discretization of the FM.
However, the authors do not discuss
the a priori convergence of $\sum_{i=1}^\infty E[||\beta_i \phi_i ||]$ therefore the results of section \ref{sec:base} cannot be
applied directly.  This is an example where the number of parameters represents the prior truncation $k$ and this increases with the discretization size $n$. 

We now present 4 workout examples.  In all cases we consider Gaussian noise for the observations with known
stadard error, as in (\ref{eqn:loc_scale_model}), and therefore the only relevant part to be taken care for in
assumption \ref{assp:1} is that the theoretical and the numeric FM are continuos, in order for the
corresponding posteriors to be correctly defined.

As far as the derivation of the EABF bound is concerned, we require that the numeric
FM error bound in (\ref{eqn:global_error1}) exists for all $\theta$.

\subsection{A 1D wave equation example}\label{sec:hugo}

Consider the homogeneous Dirichlet conditions for the wave equation
\begin{equation}\label{enq:Wave Dirichlet}
u_{tt}=c^2u_{xx},\qquad x\in(0,l),  \qquad u(0,t) = 0 = u(l,t)
\end{equation} 
with initial conditions $u(x,0)=\phi(x), u_t(x,0)=\psi (x)$.

Under a separation of variables technique, a solution of the above problem can be found substituting $u(x,t)=X(x)T(t)$ in the PDE. This problem becomes a pair of separate ordinary differential equations for $X(x)$ and $T(t)$ given by
\begin{equation}\label{eq:ODEs}
X''+\beta^2 X=0  \qquad  \text{ and } \qquad T''+c^2\beta^2 T=0
\end{equation}
With $\phi(x)= \sum_{n}A_n \sin\left(\dfrac{n\pi x}{l}\right)$ and
$\psi(x)= \sum_{n}\dfrac{n\pi c}{l}B_n \sin\left(\dfrac{n\pi x}{l}\right)$ we obtain 
\begin{equation}\label{eq:Solseries}
u(x,t)=\sum_{n}\left(A_n\cos \left(\dfrac{n\pi ct}{l}\right)+B_n\sin \left(\dfrac{n\pi ct}{l}\right)\right)\sin\left(\dfrac{n\pi x}{l}\right)
\end{equation}

To simplify the computations, let us consider the case $c=1$ and $\psi(x)=0$, that is, $B_n=0$ for all $n$. Therefore $u(x,t)=\sum_{n}A_n \cos \left(n\pi t\right) \sin \left(n\pi x \right)$ where $A_n =2\int_{0}^1 \phi(x) \sin(n\pi x)dx$ and $t=1$ then
\begin{equation}\label{eq:TSolEx}
u\left(x,1\right)=\sum_{n=0}^{\infty}A_n \left(-1\right)^{n}\sin\left(n\pi x\right).
\end{equation}

The inverse inference problem is as follows.
Given measurements of $u(x,1)$ at $z_0,z_1,\ldots,z_m \in (0,1)$, we need to infer
the unknown function $\phi(x)$.  Namely, consider the case 
\begin{equation}\label{eq:Obs}
y_j=u(z_j,1)+\varepsilon_j, \qquad j=1,2,...,m
\end{equation}
where $\varepsilon_j\sim \mathcal{N}(0,\sigma^2)$.

In this case we consider the FM and the observation functional
as the identity, $\H\F[\theta] = \theta = \sum_{n=0}^{\infty} A_n \phi_n(x)$,
with $\phi_n(x) = \left(-1\right)^{n}\sin\left(n\pi x\right)$.  No error is considered
in the FM and only a truncation $\kappa$ is considered in the series, that is
$\theta_k = \sum_{n=0}^{\kappa} A_n \phi_n(x)$.  Evidently the FM
is continuos and regularity conditions are met for the infinite dimension
posterior to exists.  Regarding the bound in (\ref{eqn:main_bound}) only
the $|| \pi - \pi_k ||_{TV}$ term is relevant since the error bound for the FM is zero.
That is, to bound the EABF we only need to bound the a priori truncation
error $|| \pi - \pi_k ||_{TV}$, which we do below.  In this case, the marginal posterior distribution of $\kappa$
is simple to calculate, since normalizations constants are available analytically, to
obtaining the effective dimension of the problem.

Since the FM is linear may therefore express (\ref{eq:Obs}) as a linear model in the usual way, namely
\begin{equation}\label{eq:LinearObs}
y = X_\kappa \beta_\kappa + \varepsilon
\end{equation}
where $\beta_\kappa = \left(A_1,A_2,\ldots, A_\kappa\right)$ and $X_\kappa$ a $m\times \kappa$
matrix where each row of $X_\kappa$ is 
$$
\left( -\sin(\pi z_j), \sin(2\pi z_j), \ldots, (-1)^\kappa\sin(\kappa\pi z_j)\right) .
$$

As in section~\ref{sec:base} a priori $\kappa \sim h(\cdot)$ and a truncated prior is obtained by restricting
$\kappa < k$.  Considering a priori
$\beta_\kappa \sim \mathcal{N}\left( \mu_0^\kappa, \sigma^2 (A_0^\kappa)^{-1} \right)$, given $\kappa$ the
posterior for $\beta_\kappa$ is
$$
\beta_\kappa | \kappa, y \sim \mathcal{N}\left((X_\kappa^TX_\kappa + A_0^\kappa)^{-1}(A_0^\kappa\mu_0^\kappa+X_\kappa^Ty),\sigma^2(X_\kappa^TX_\kappa+A_0^\kappa)^{-1}\right) .
$$
The normalization constant for these models are readily available, to obtain the marginal posterior
distribution for $\kappa$, namely
\begin{equation}\label{eqn:kappa}
P( \kappa = i | y) \propto h(i)I(i < k) \frac{|A_0^i|^{1/2}}{| X_i^TX_i + A_0^i |^{1/2}}
\exp \left\{ \dfrac{1}{2\sigma^2}y^TX_i\left(X_i^TX_i+A_0^i\right)^{-1}X_i^Ty \right\} .
\end{equation}

Synthetic data was obtained with $\sigma=0.025, m=15$ with the true
$\phi(x)=1.5\sin(\pi x)+0.8\sin(2\pi x)+0.7\sin( 3\pi x)+0.3 \sin(4\pi x)$, that is $\kappa=4$.
The prior $h(\cdot)$ for $\kappa$ is a $Po(10)$.  In figure~\ref{fig:Wave_1D} we present
$P( \kappa = i | y)$  truncated to $\kappa \leq 15$; note that $\sum_{i=k+1}^\infty h(i) < \frac{1}{20}$
already to bound the EABF accordingly.  Additionally we produced $P( \kappa = i | y)$ renormalizing it with $\kappa \leq 20$,
obtaining virtually the same results (not shown).  In fact, summing up the normalization constants
in (\ref{eqn:kappa}) provides $Z_k(y)$ and summing up to 20 provides an estimate
of $Z(y)$ from which we can produce an estimate of the ABF
$\frac{1}{2}\left| \frac{Z_k(y)}{Zy)} -1 \right|$ which results in $1.3\times 10^{-10}$, very well below
$\frac{1}{20}$.
 
\begin{figure}
\begin{center}
\includegraphics[scale=0.4]{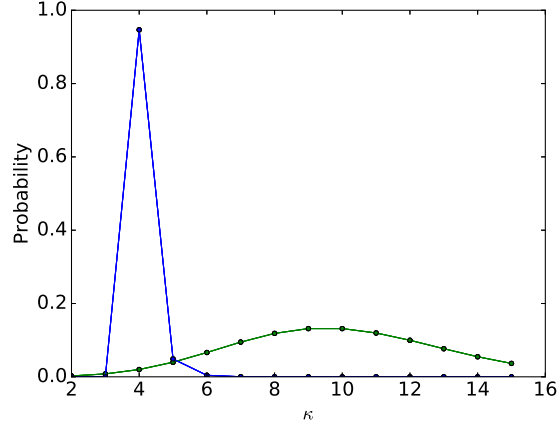}
\caption{\label{fig:Wave_1D} The marginal posterior pmf of $\kappa$, the parameter dimension,
and the prior of $\kappa$ (green), truncated to $\kappa \leq 15$.  The true dimension
is 4, which corresponds to the map of this posterior.  
}
\end{center}
\end{figure}

\subsection{A deconvolution example}\label{sec:deconv}

We present a 1D deconvolution example where
an exact solution is available and Simpson's rule is used to also have a numeric
version of the FM.  Here we illustrate both, a numeric FM with a discretization and
a truncation in the prior.  The bound in (\ref{eqn:main_bound}) is used to bound
the EABF obtaining nearly identical results as using the exact FM, in a trans dimensional
MCMC, to also obtain the marginal posterior for $\kappa$. 

We consider the convolution of $\theta$ with the kernel $c$
\begin{equation}\label{eqn:conv}
\F[\theta] = \int_0^1 c(y-x)\theta(y)dy
\end{equation}
which constitutes de FM.  Assume $c(z) = \frac{1}{2\alpha} 1_{[-\alpha,\alpha]}(z)$ and
$\theta(t) = \beta_0+\displaystyle\sum_{i=1}^{\infty} \beta_i \cos(2\pi i t) + \alpha_i \sin(2\pi i t)$.
With the $L_2$ norm the base functions have constant norm (independent of $i$)
equal to $\frac{1}{\sqrt{2}}$, we do not multiply by $\sqrt{2}$.

With the change of variable $u=y-x$ and identifying correctly the indicator function,
(\ref{eqn:conv}) may be calculated with
$\int_{\max\{x-\alpha , 0\}}^{\min\{x+\alpha , 1\}}\frac{1}{2\alpha} \theta(z) dz$;
this integral may be calculated analytically for each base function $\cos(2\pi i t)$ or $\sin(2\pi i t)$ in the series
definition of $\theta$.  Therefore, for a truncated series given $\kappa$,
$\theta_{\kappa} (t) = \beta_0+\displaystyle\sum_{i=1}^{\kappa} \beta_i \cos(2\pi i t) + \alpha_i \sin(2\pi i t)$, and
$\F[\theta_{\kappa}]$ is available analytically.  To construct a numerically defined FM $\F^n[\theta_{\kappa}]$ we use
Simpson's rule with a grid of size $n$ to evaluate the integral
$\int_{\max\{x-\alpha , 0\}}^{\min\{x+\alpha , 1\}}\frac{1}{2\alpha} \theta_{\kappa}(z) dz$.


The deconvolution inverse problem arises for the case when there are observations available from
the convolution, ie. $\theta$ is unknown and one wants to infer $\theta$.  That is
$$
y_i = \F[\theta](t_i) + \sigma \epsilon_i; \epsilon_j \sim N(0,1),
$$
$0 = t_1 < t_2 < \ldots < t_m = 1$ evenly spaced observation points (in this case the observation
functional $\H$ is the identity).  The error in the FM is calculated directly with
$|\F[\theta_{\kappa}](t_i) - \F^n[\theta](t_i)|$ since in this example the theoretical FM $\F[\theta_{\kappa}]$
is also available.  
The parameters needed to be inferred are $\beta_0, \beta_1, \alpha_1,  \beta_2, \alpha_2, \ldots$ .
A priori, an independent truncated normal prior in $[-a,a]$ with mean 0, $TN_a(s) =  \frac{1}{\sqrt{2 \pi} s (1-2\Phi(-a s))}
\exp(-\frac{1}{2} \frac{x^2}{s^2}) I_{[-a,a]}(x)$, is assigned to each $\beta_i, \alpha_i$ such that
$$
\beta_0 \sim TN_a( s_0 = \sigma_\beta ) ~~\text{and}~~
\beta_i, \alpha_i \sim TN_a( s_i = \sigma_\beta e^{-(i-1) \lambda_\beta}); i=1,2, \ldots .
$$

Evidently $\F[\theta_{\kappa}]$ and $\F^n[\theta]$ are continuos.
A global error bound, as in (\ref{eqn:global_error1}), is indeed sought, for all $\theta$s, since the support for
the $\beta_j$s and $\alpha_j$s is compact.  Since $\sum s_i$ is convergent, the sine-cosine series converges and
the prior distribution on the $\beta_j$s and $\alpha_j$s induces a prior $\pi$ for $\theta$, as explained in
section~\ref{sec:base}.  For the prior for $\kappa$ we take a Poisson with mean 8, but shifted to 1 and
renormalized to odd numbers, so $\kappa=1,3,\ldots$ only.
Truncating this prior to $\kappa < k$ terms induces the truncated prior $\pi_k$, as explained in section~\ref{sec:base}.

We produce $m=10$ synthetic data points with $\sigma = 0.02$, taking as the true $\theta$ the sine-cosine series function
with coefficients $\beta_0 = 0.9,  \beta_1=\alpha_1=-0.4,  \beta_2=\alpha_2=-0.3, \beta_3=\alpha_3=-0.2, \beta_i=\alpha_i=0; i\geq4$.  That is, the true dimension is $\kappa=7$.  The true sine-cosine series function, its convolution and the simulated
data points may be seen in figure~\ref{fig:DeConv_DataDim}.

For the prior we let $a=1$,
$\sigma_\beta = 0.3$ and $\lambda_\beta = -(1.0/10) * \log(0.1)$,
so that $\beta_{10}$ has 0.1 of the std. dev. of $\beta_1$.  The truncated normals are well
contained in the $[-a,a]$ interval.  
The posterior is truncated at dimension $k=12$, so that the tail of the Poisson prior is less than
0.01 leading to $||\pi - \pi_k|| < 0.01$, as explained in section~\ref{sec:base}. 

We designed a RJMCMC, using the t-walk (an affine invariant MCMC) within each
dimension.  The transdimensional jump move is simple, proposing a new
$\beta^p_{j+1} \sim N(  0, \beta_j/4)$ (centered at cero with a smaller size than the previous
$\beta$) and equivalently for the $\alpha_j$s.

We ran our RJMCMC with the approximate FM with errors complying with the
bound in (\ref{eqn:main_bound}), $K < \frac{\sigma}{m} \frac{b - || \pi_k -\pi ||_{TV}}{\rho(0)}$,
taking $b=\frac{1}{20}$.  In this case $\rho(0) = \frac{1}{\sqrt{2 \pi}}$ since we
are considering Gaussian errors.  We also ran our RJMCMC with the exact
FM for comparisons.

The t-walk mixes quite well in each dimension and with an Integrated Autocorrelation
Time of around 120.  We took 1,000,000 iterations of the RJMCMC, with a burn-in of
1,000, leading to an effective sample size of roughly 8,000.  This is good enough 
to create a histogram for
parameters up to $\beta_4$ and $\alpha_4$  (dimension = 9, see figure~\ref{fig:DeConv_DataDim}(b)).
Higher dimensions are seldom visited and the corresponding effective sample for
$\beta_5$ and $\alpha_5$ and above is very small, even for 1,000,000 iterations, leading to high Monte Carlo errors.
The posterior probability for each dimension is shown in figure~\ref{fig:DeConv_DataDim}(b) and
the corresponding posterior marginals are shown
in figure~\ref{fig:DeConv_Posts}.  In this, since we use a MC approach no estimation of the ABF is
readily available.

\begin{figure}
\begin{tabular}{c c}
\includegraphics[scale=0.35]{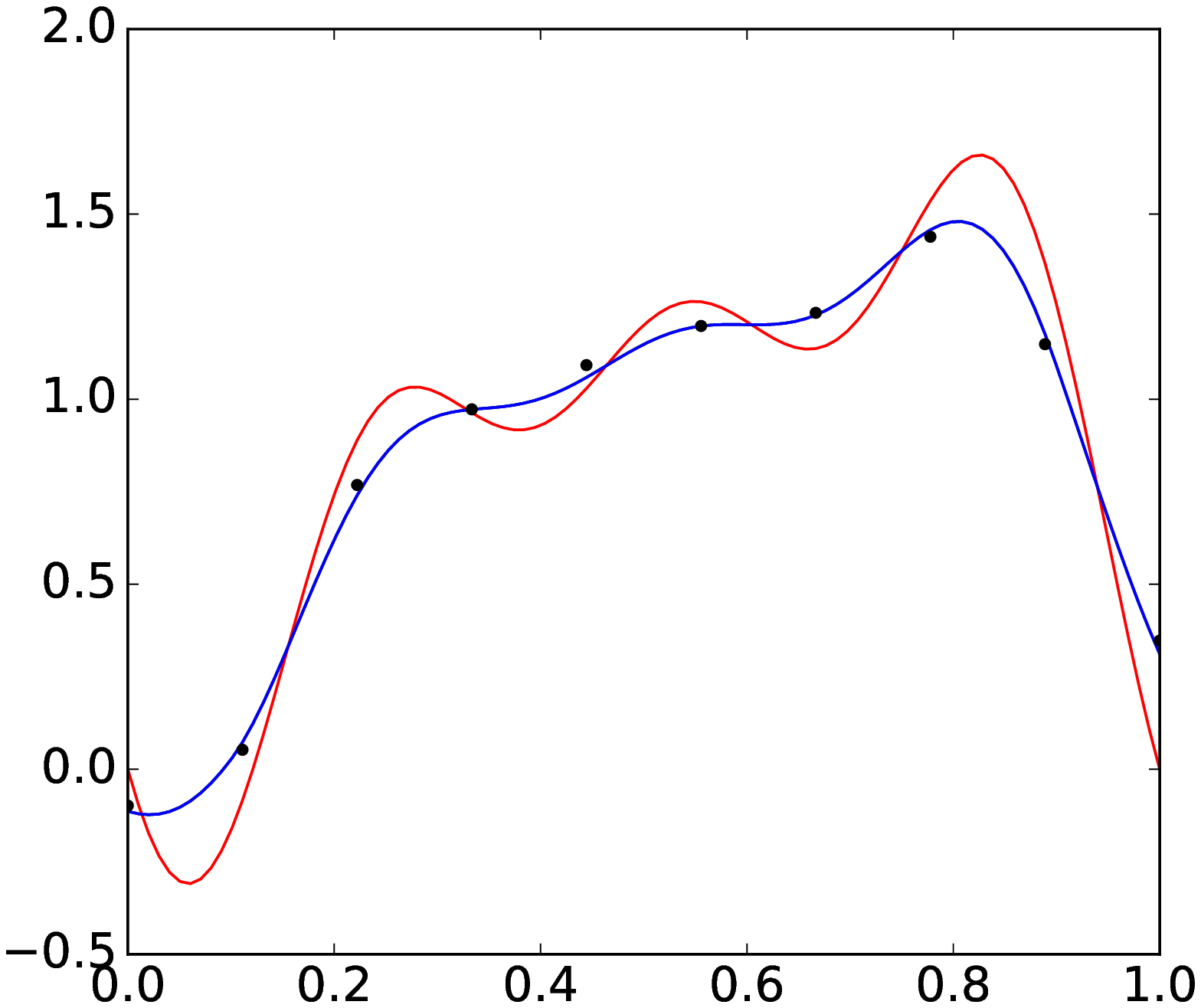} &
\includegraphics[scale=0.35]{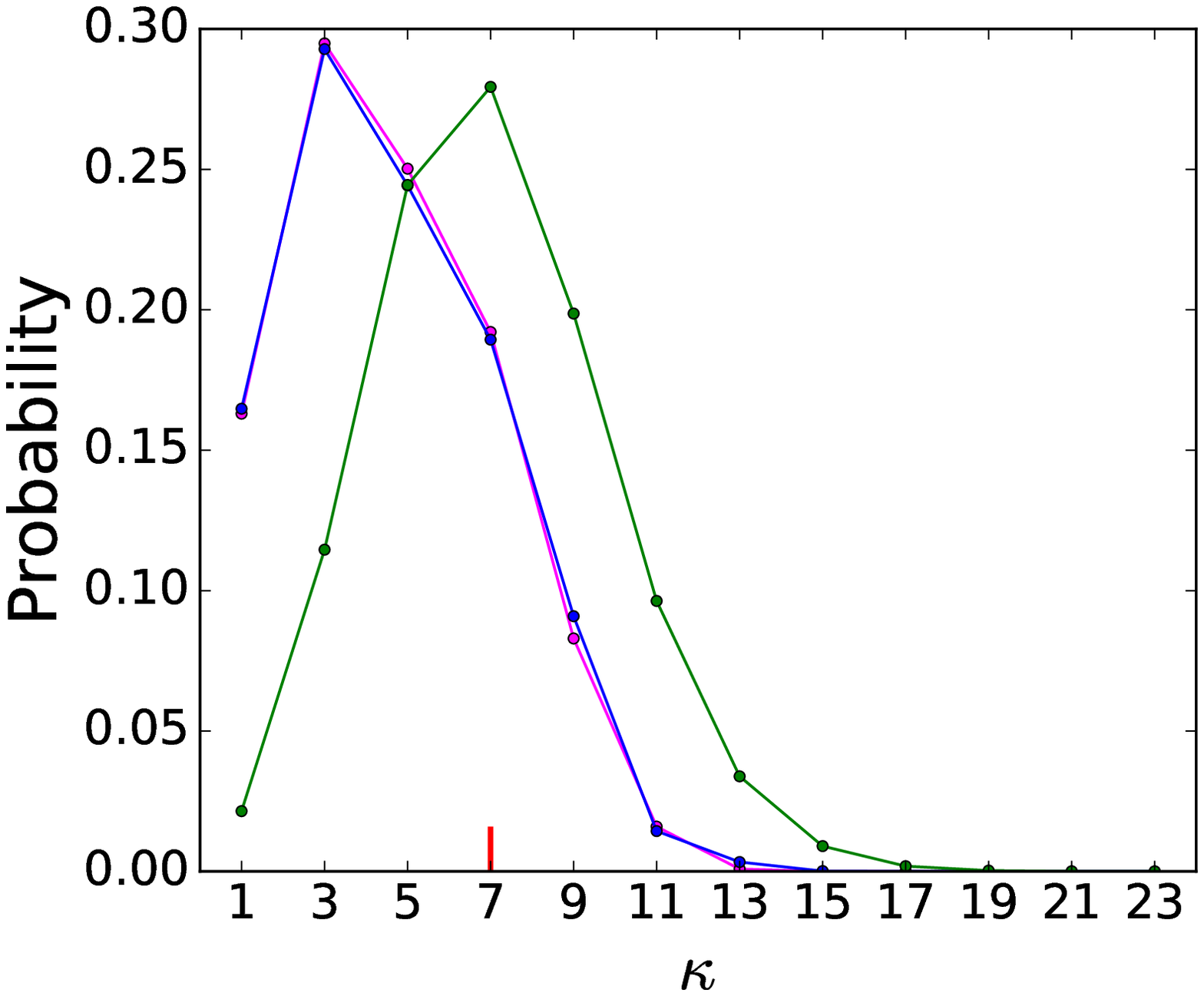} \\
(a) & (b)
\end{tabular}
\caption{\label{fig:DeConv_DataDim}(a) True sine-cosine series function with coeficients
$\beta_0 = 0.9,  \beta_1=\alpha_1=-0.4,  \beta_2=\alpha_2=-0.3, \beta_3=\alpha_3=-0.2, \beta_i=\alpha_i=0; i\geq4$
(red), its convolution (black) and simulated data points.
(b) Prior (green) and posterior probability of each dimension using the approximate FM (blue), complying
with the bound in (\ref{eqn:main_bound}), and using the exact FM (magenta); the true dimension ($\kappa=4$) is marked with red.} 
\end{figure}

\begin{figure}
\begin{tabular}{c c c}
\includegraphics[scale=0.25]{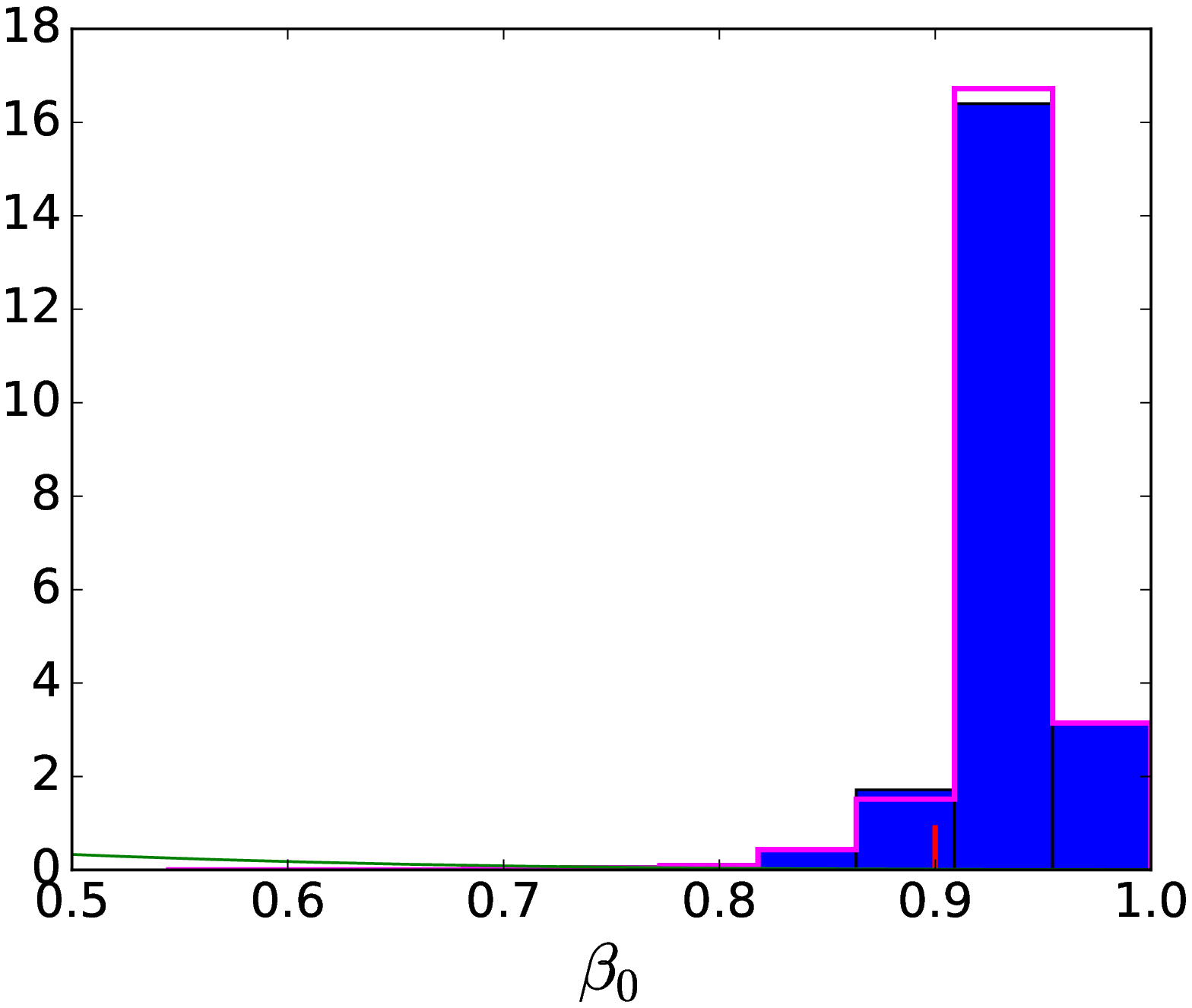} &
\includegraphics[scale=0.25]{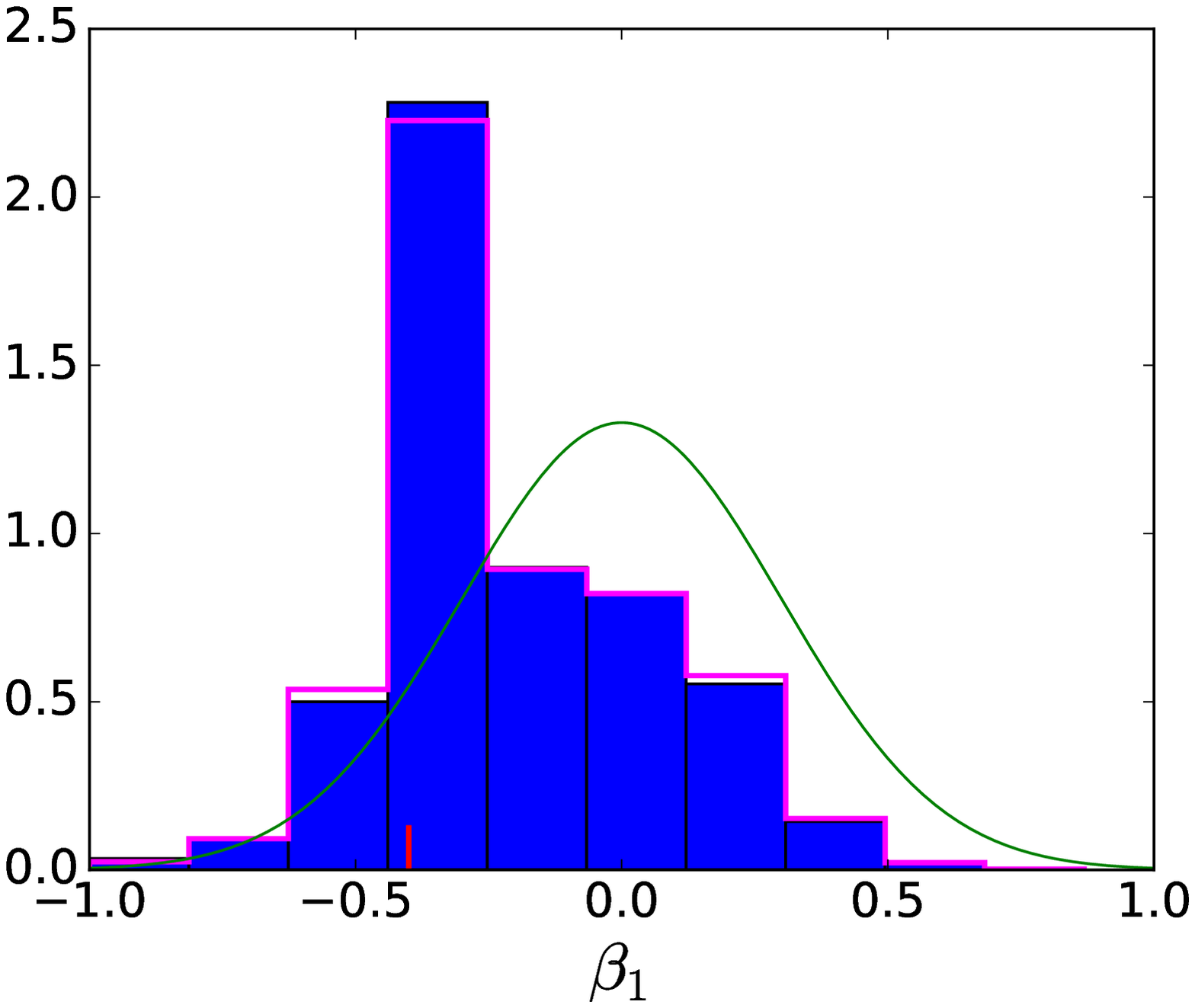} &
\includegraphics[scale=0.25]{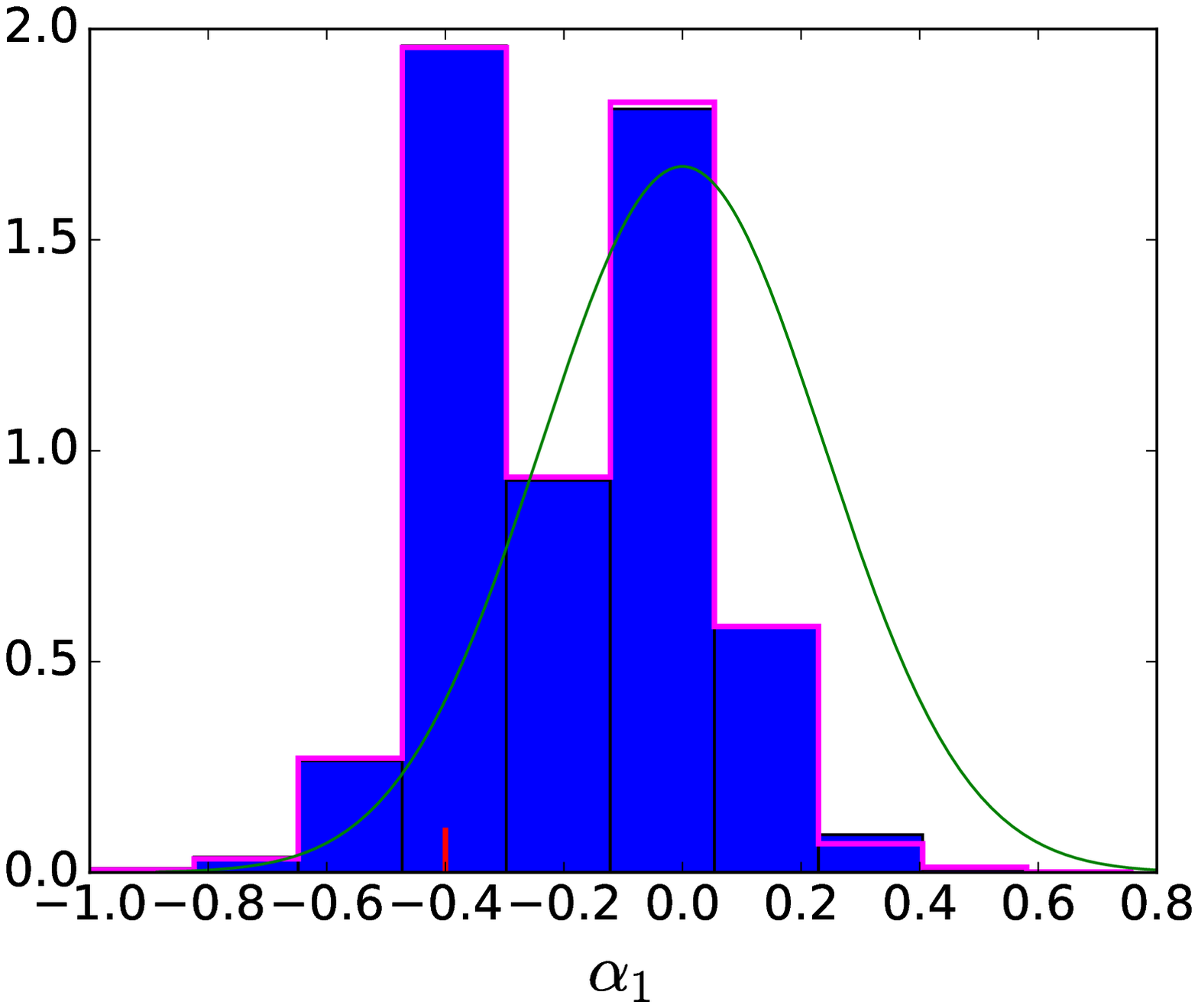} \\
\includegraphics[scale=0.25]{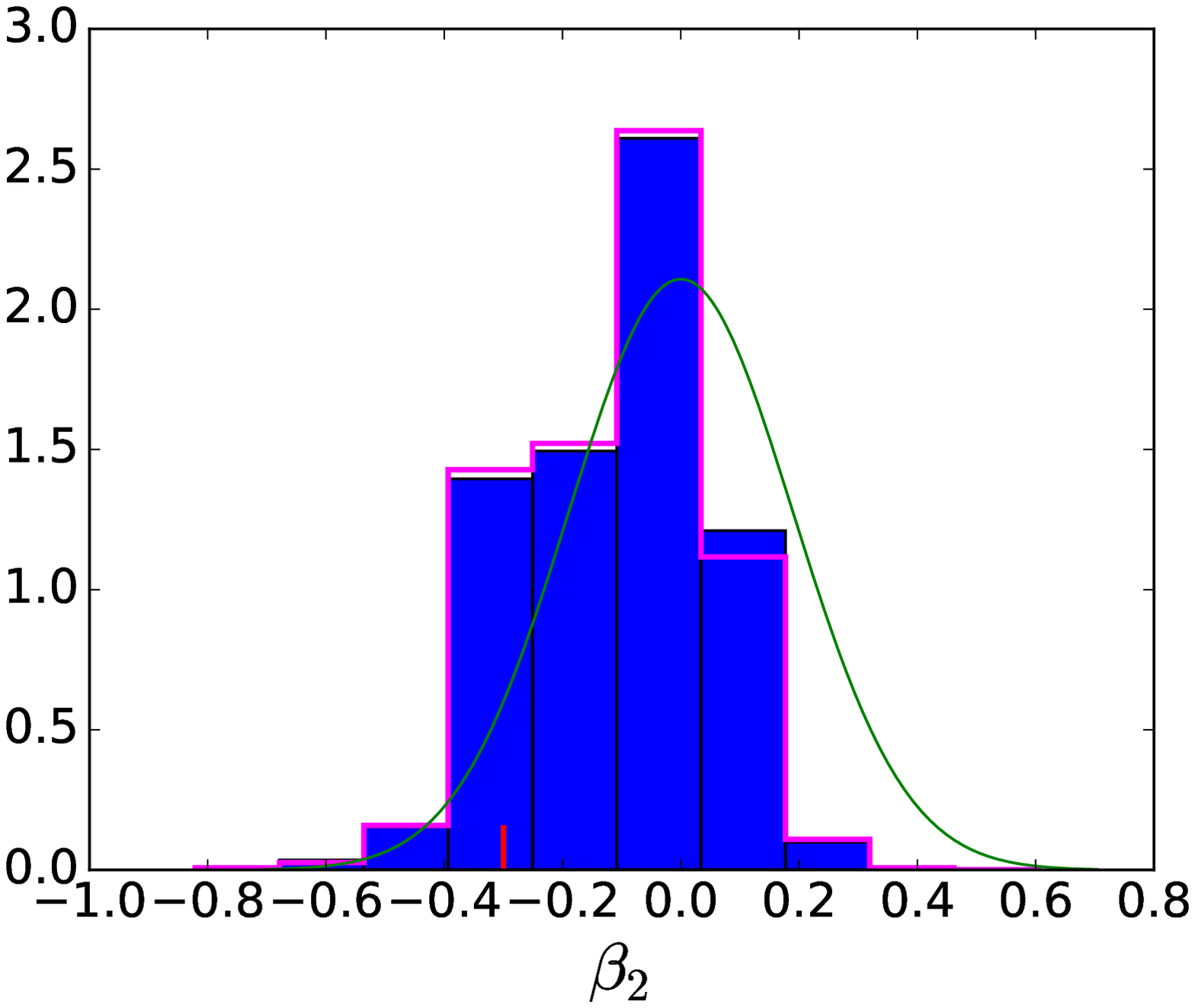} &
\includegraphics[scale=0.25]{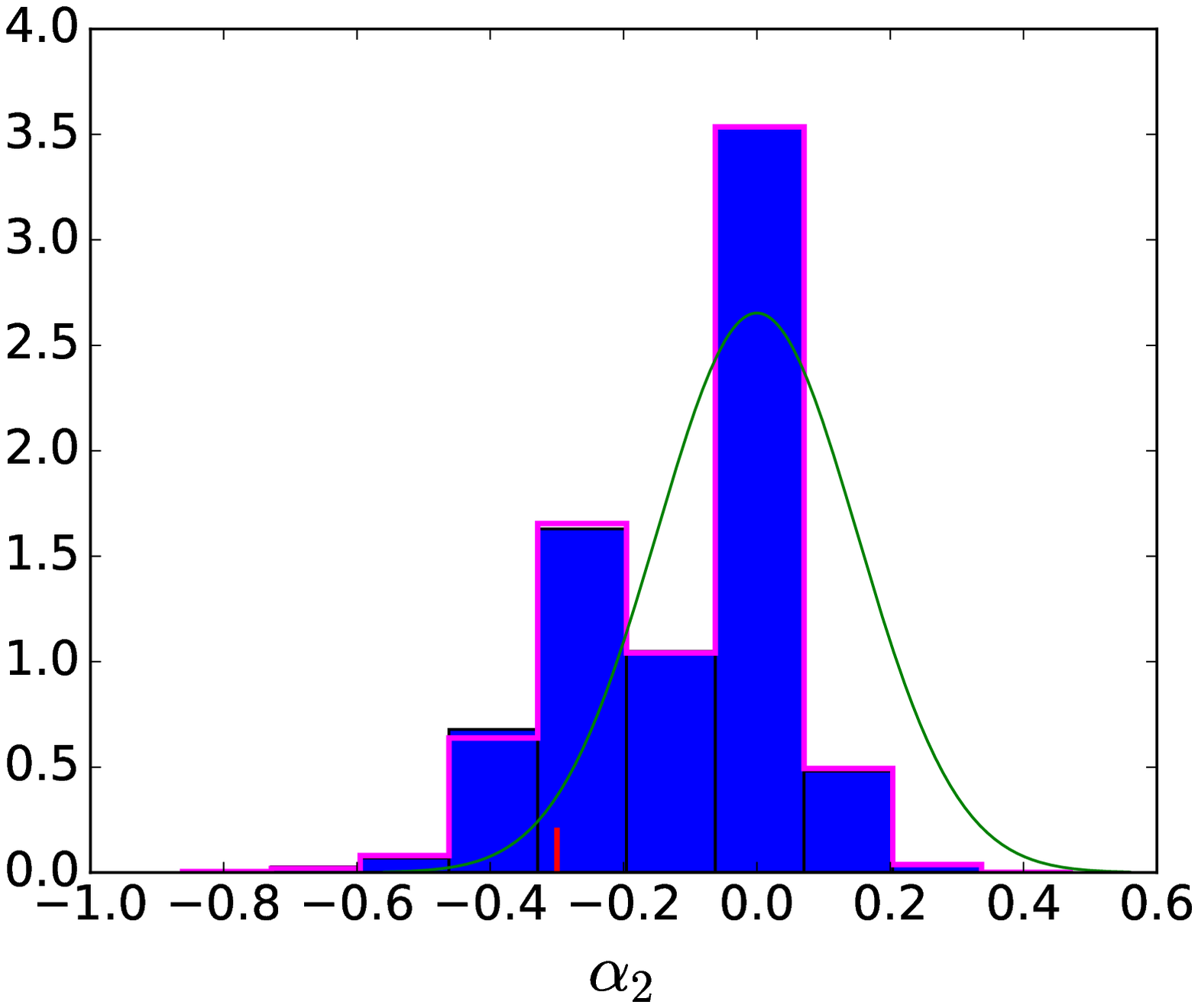} &
\includegraphics[scale=0.25]{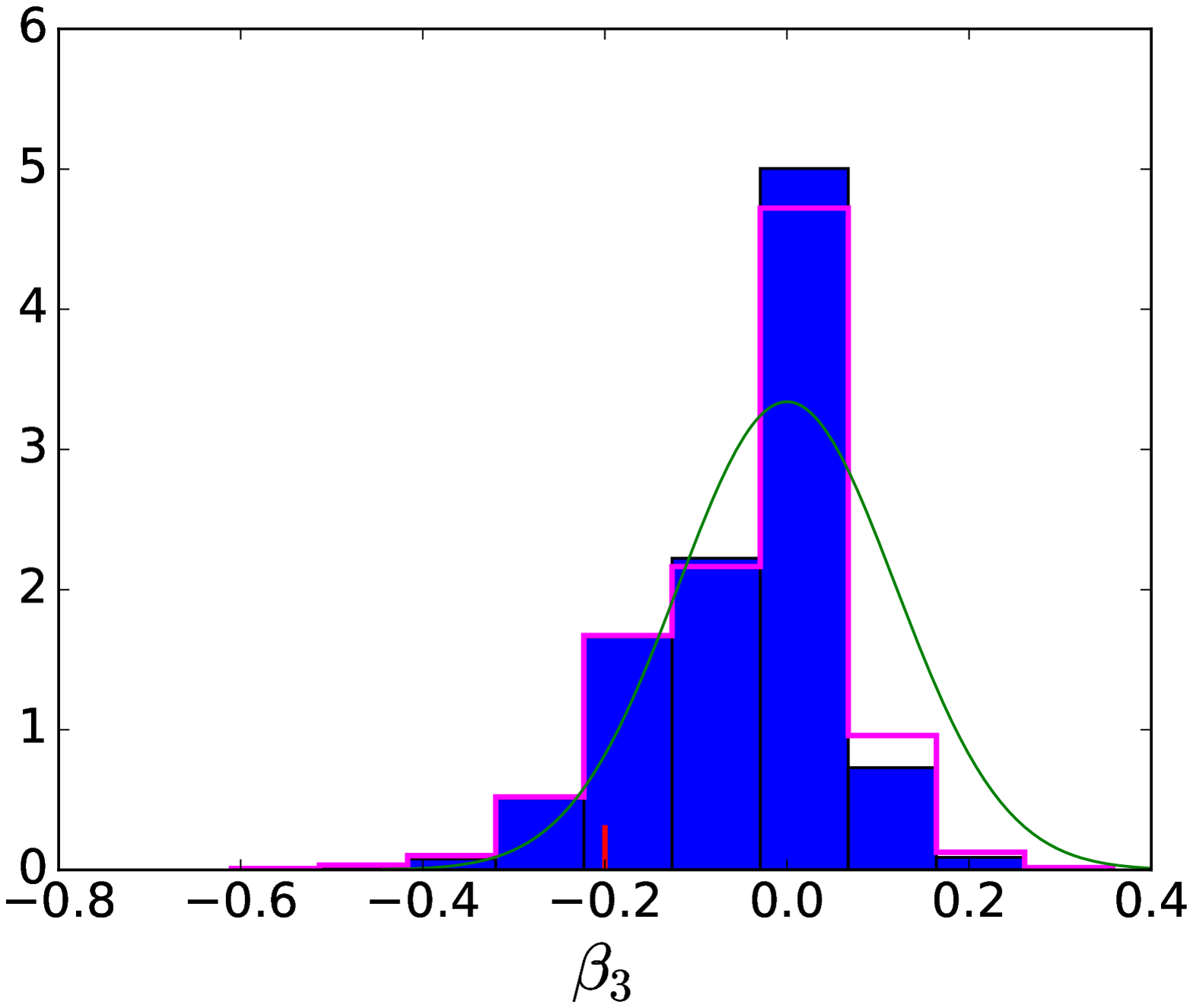} \\
\includegraphics[scale=0.25]{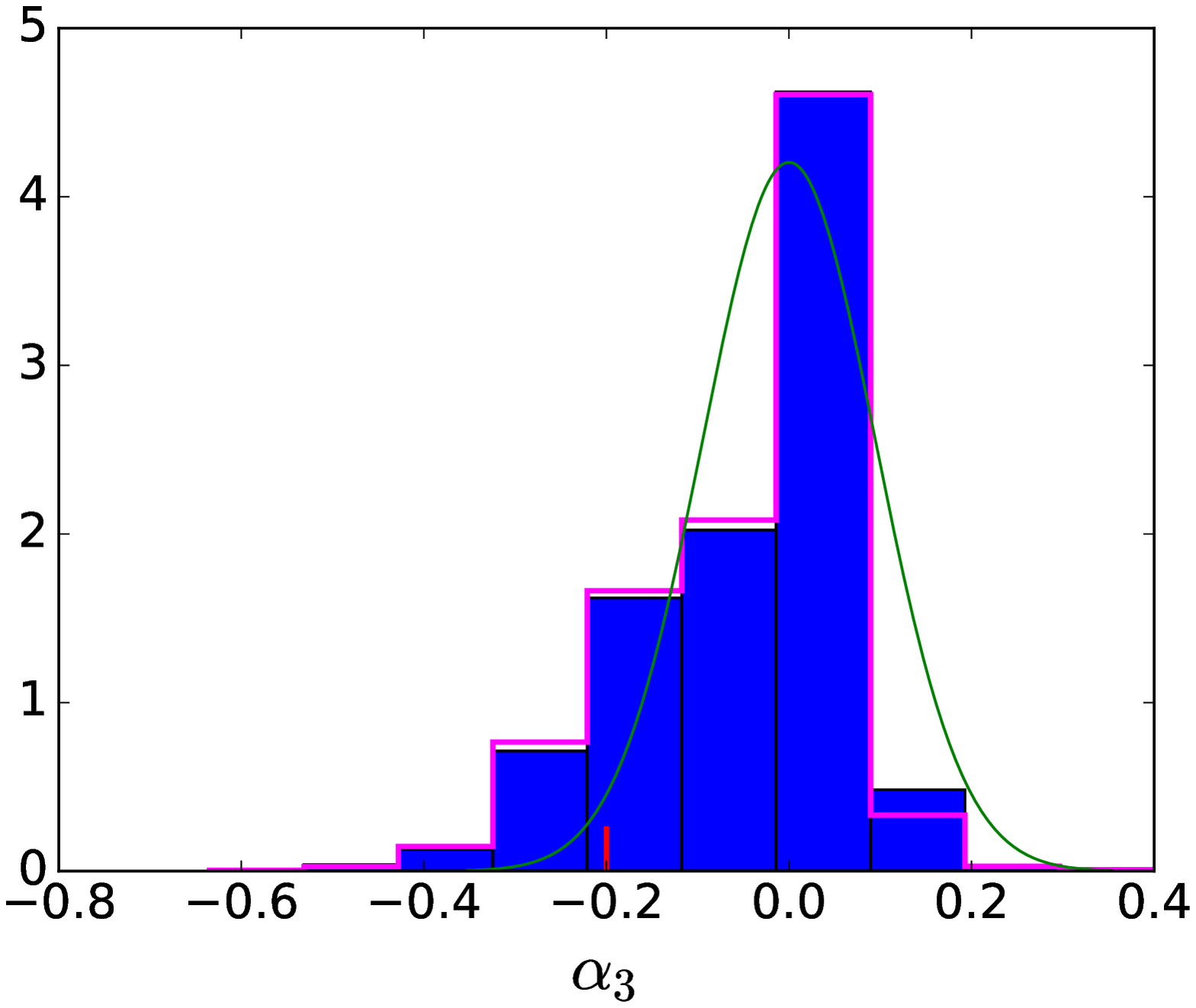} &
\includegraphics[scale=0.25]{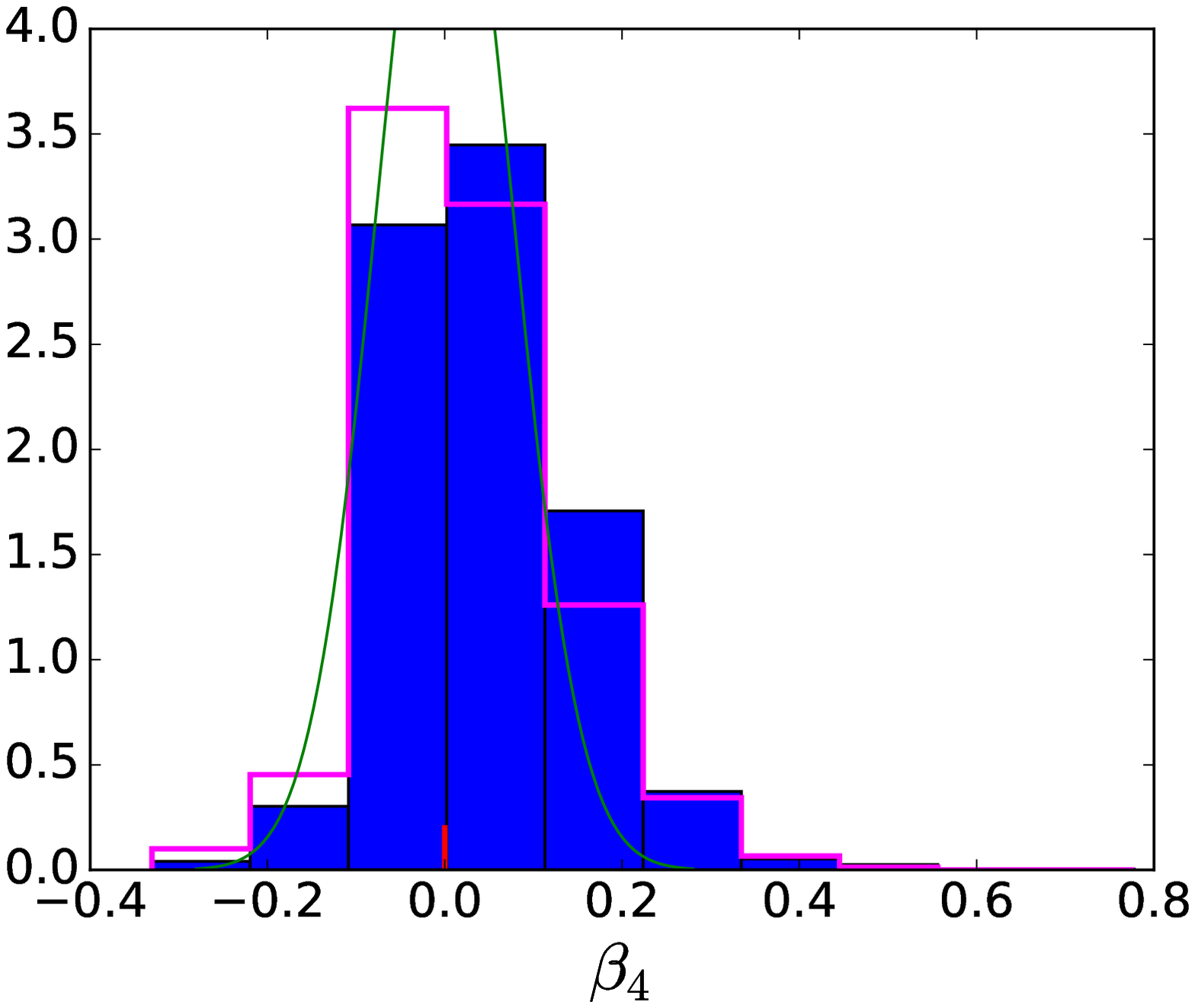} &
\includegraphics[scale=0.25]{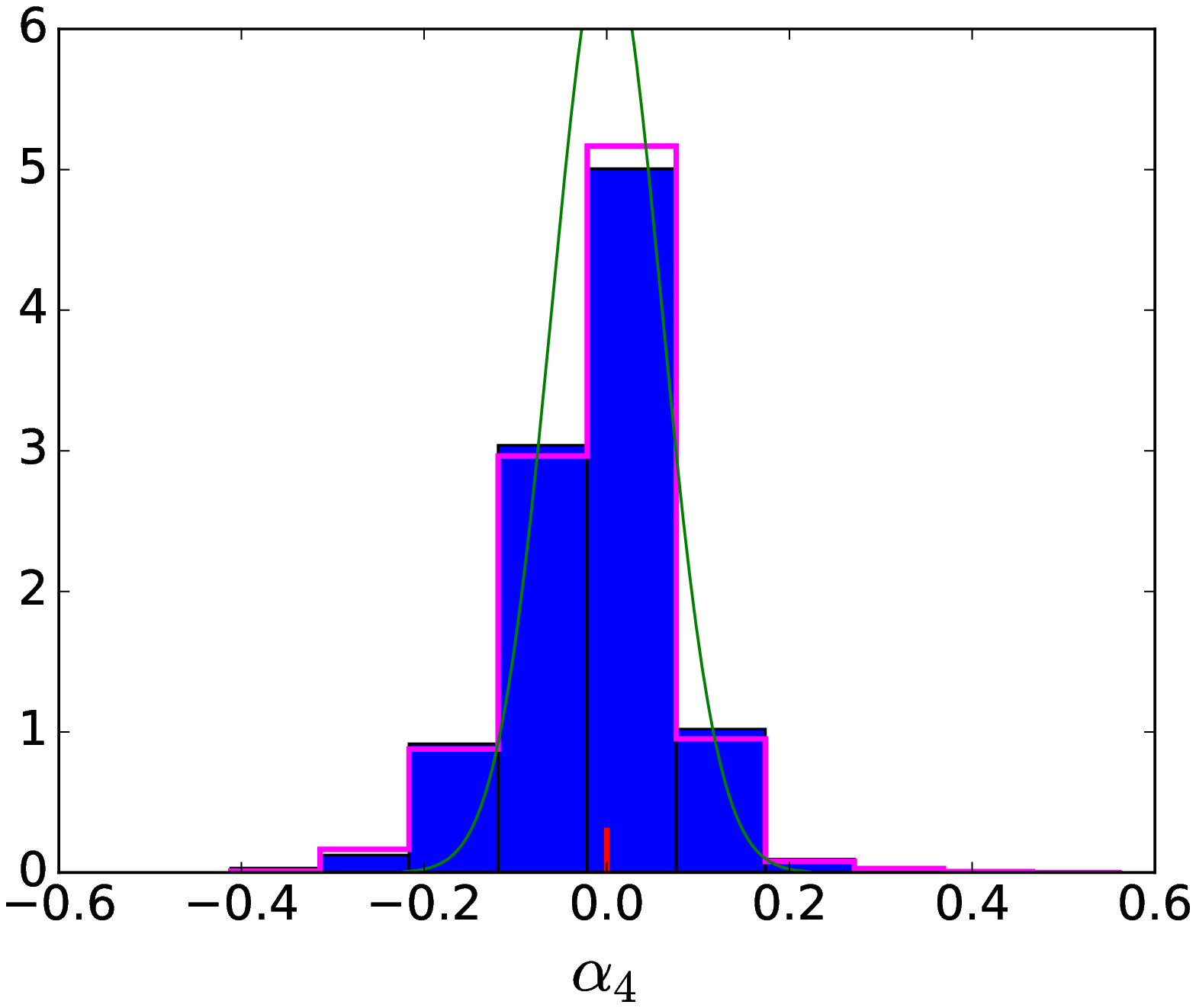} \\
\end{tabular}
\caption{\label{fig:DeConv_Posts} Prior (green) and posterior marginals for
$\beta_0, \beta_i, \alpha_i; i=1,2,3,4$ for the approximate FM (blue),
complying with the bound in (\ref{eqn:main_bound}), and using the exact FM (magenta).
The true value of the parameter is marked with a red tick.} 
\end{figure}

Our approximate FM leads to basically error free posteriors, as seen in
figures~\ref{fig:DeConv_DataDim}(b) and~\ref{fig:DeConv_Posts}.  Any extra
precision put into the Simpson's rule integrator will lead to useless extra CPU time,
with respect to the resulting numeric posterior, for the sample size and noise level
at hand.  For more realistic applications, where $\F[\theta_k]$ is not available analytically,
error bounds on the integrator could be used.  Moreover, since the error bound is required
at observations points $t_i$s only, an irregular integration grid could be used by making
it finer around the $t_i$s; this could lead to further improvements in CPU time.

\subsection{A 1D heat equation inferring the thermal conductivity}\label{sec:1d_pois}

Let us consider the thermal conductivity problem for the stationary heat equation in 1D 
\begin{align}
-\frac{d}{dx}\left(a\left(x\right)\frac{du\left(x\right)}{dx}\right)= & f(x),\qquad x\in\left(0,1\right),\label{eqn:heat}
\end{align}
subject to Dirichlet boundary conditions $u\left(0\right)=u\left(1\right)=0$, with forcing term
$f\left(x\right)=\sin\left(\pi x\right)$ and thermal conductivity $a(x) >0$ that varies with the space parameter $x$. 

In this example, the FM is not available analytically and a numeric (FEM) FM is used.  We use
an error estimation in the FM to bound the EABF.  In this case, since the FEM used is numerically
demanding we keep the prior truncation fixed ($\kappa = k$). 

The numerical solution of (\ref{eqn:heat}) is computed using the Finite Element Method (FEM), which allows us to calculate a local error estimation in the $L_{2}$ norm
\citep[see][for more details]{Babuvska1978}, given by
\[
\left\Vert u_{h}-u\right\Vert _{L_2(I_i)}=\left(\int_{x_{i-1}}^{x_{i}}\left(u_{h}-u\right)^{2}dx\right)^{1/2}\leq\frac{h^{2}}{\pi^{2}a_{\text{min}}^{i}}\left\Vert r\right\Vert _{L_2(I_i)}, \quad i=1,\ldots,n,
\]
where $m$ is the number of elements, $u_h$ the numerical solution with step size $h$, $I_{i}=[x_{i-1},x_{i}]$, $a_{\text{min}}^{i}=\ensuremath{\underset{x\in I_{i}}{\min} a\left(x\right)}$ and $r(x)=f(x)+\frac{d}{dx}\left(a(x)\frac{du_{h}(x)}{dx}\right)$ is the residual. Then, the the error estimation $\hat{K}_0$ is computed by
\begin{equation}
\hat{K}_0=\max_{I_{i}}\frac{h^{2}}{\pi^{2}a_{\text{min}}^{i}}\left\Vert r\right\Vert _{L_2(I_i)}, \quad i=1,\ldots,n.\label{eq:estk0}
\end{equation}

The inference problem is the estimation of the function $a(x) = \exp(b(x))$ given observations of  $u_{j}=u(x_{j})$ at a fixed locations $x_{j}$, $j=1,\ldots,m$.  Certainly, the theoretical and the numeric FMs are continuos.

We simulate a synthetic data set with the true thermal conductivity is $a\left(x\right)=k_{0}-r\frac{k_{0}}{1+\exp\left(-xa+\frac{a}{s}\right)}$, and error model $ Y_{j}=u\left(x_{j}\right)+\sigma\varepsilon_{j}$, where $\varepsilon_{j}\sim N\left(0,1\right)$, with the following parameters $k_{0}=5$, $r=0.9$, $a=20$, $s=2$ and $\sigma=0.0005$ (to maintain a 0.01 signal-to-noise ratio).  
The data are plotted in figure~\ref{fig:compar}(b). We consider $m=30$ observations at locations $x_{j}$ regularly spaced between $0$ and $1$.

In order to define the parametric space, the function $b$ is represented as a third-order b-spline that passes through the set of points $\{b_{i}\}_{i=0}^{k}$, where $b_{i}=b(x_{i})$. Therefore, the parameter space is defined by $\theta=\{b_{i}\}_{i=0}^{k}$.  In this case, the number of parameters
is taken as fixed $k=20$.  Regarding the prior distribution for the parameters $\{b_{i}\}_{i=0}^{k}$,
we define their prior using Gaussian Markov
random field (GMRF) zero mean and sparse precision matrix (inverse-covariance), encoding
statistical assumptions regarding the value
of each element $b_{i}$ based on the values of its neighbors
\citep[see details in][]{Bardsley2013}.  We restrict the support of $0 \leq b(x) \leq B$, that is $b_i \in [0,B]$, where $B=\log(10)$.
Then the parameter space is compact and there exist a global bound for (\ref{eq:estk0}), complying with
(\ref{eqn:global_error1}).

With the standard error and sample size used, calculating the error bound for the Forward Map (FM)
as stated in~(\ref{eqn:main_bound}), we
require $\hat{K}_0 < 2.1\times10^{-6}$. To sample from the posterior distribution, we also use the t-walk \citep{Christen2010}.

Regarding the numerical solver, we begin with a relatively large step size $h =0.02$
(considering $n=50$ elements in the FEM) and start the MCMC.  At each iteration the FM is first
computed along with its error estimation $\hat{K}_0$. If the solution $u_h$ do not satisfy the estimated global bound, ie. $\hat{K}_{0} > 2.1\times10^{-6}$, we increase the number of elements by $50$ ($h=1/(m+50)$), until the bound is met.  For $h = 0.0066$, $n=150$ elements in the FEM, the bound is achieved
for all iterations.  
For comparisons, a smaller grid is considered with $h = 0.002$, $n=500$ elements.
The results are shown in figure~\ref{fig:compar}.  We took 50,000 iterations of the twalk, the
MCMC mixes quite well.   With $n=150$ the sampling took 3 min and with $n=500$, 16 min;
in a standard 2.6Ghz processor computer.  As seen in  figure~\ref{fig:compar}
the conductivity is recovered and
taking $n=500$ elements in the FEM results in basically the same posterior as
for only $n=150$, which already comply with the EABF bound, only resulting in unnecessary
CPU effort. 

\begin{figure}
\begin{centering}
\begin{tabular}{c c}
\includegraphics[scale=0.30]{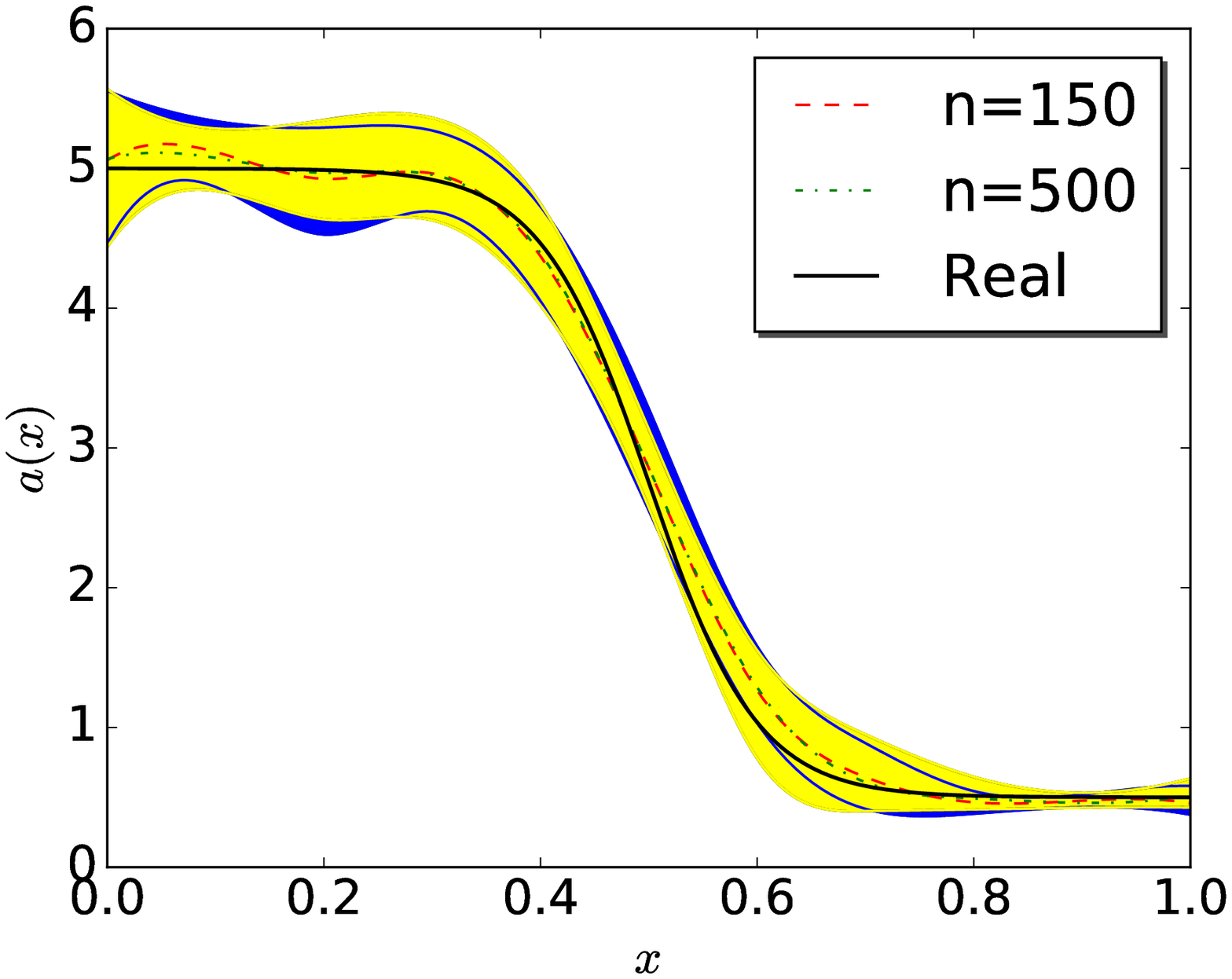} &
\includegraphics[scale=0.30]{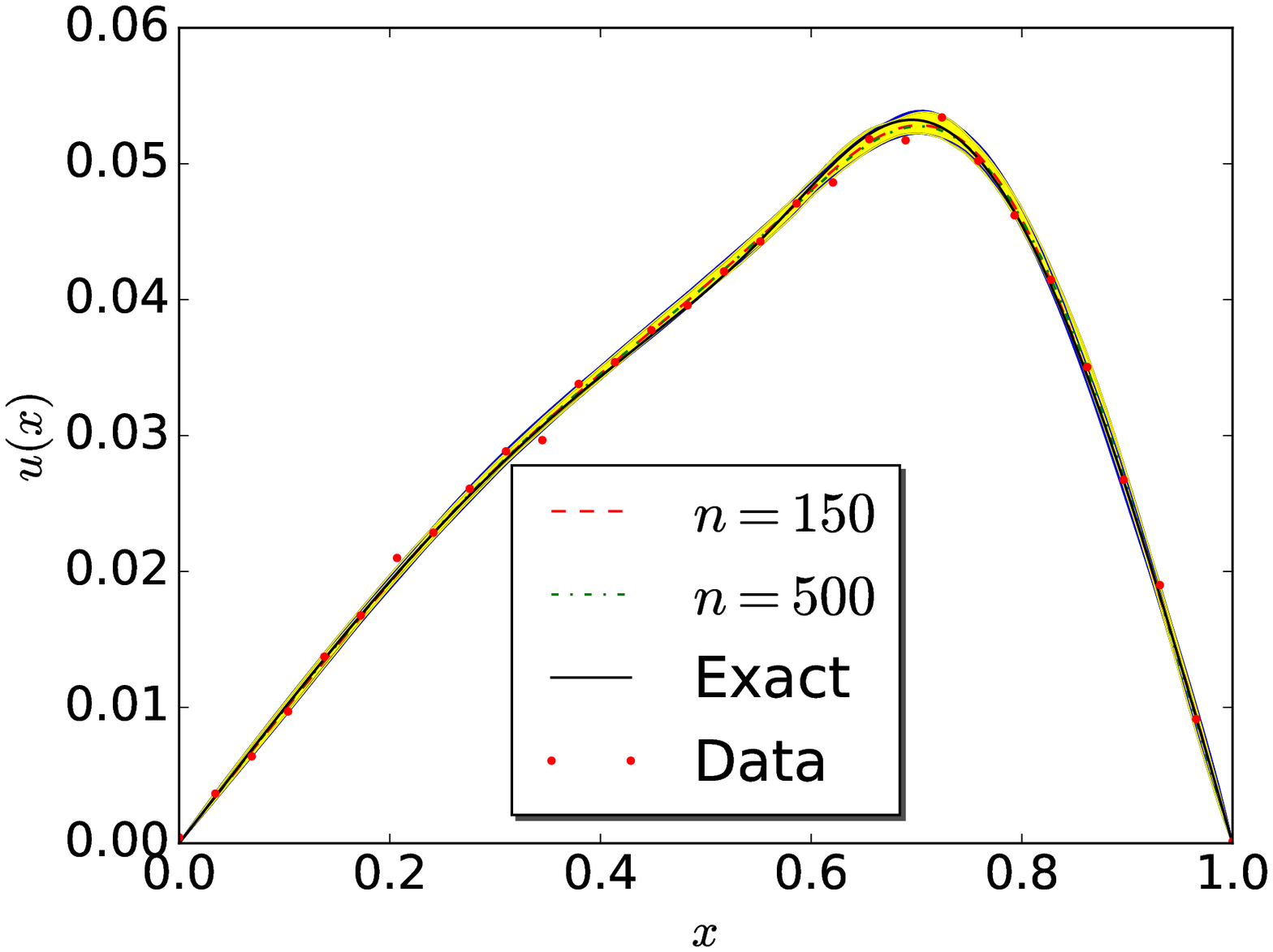} \\
(a) & (b) \\
\end{tabular}
\caption{\label{fig:compar} (a) The true conductivity $a(x)$ (black), the posterior mean with $n=150$ elements (red) and $n=500$ elements (green) in the FEM. (b) The exact solve $u(x)$ (black), the posterior mean with $n=150$ elements (red) and $n=500$ elements (green). Shaded areas represent the uncertainty in the model fit, as draws from the posterior distribution, using $150$ elements (blue) and $500$ elements (yellow). Note that, if we use a smaller step size than that required by the bound
in~(\ref{eqn:main_bound}), results are basically same simply adding CPU time.}
\par\end{centering}
\end{figure}

\subsection{A 2D heat equation inferring the initial condition}\label{sec:2d_pois}

We present a 2D heat equation problem to determine the initial
conditions from observations of transient temperature measurements taken within the
domain at a time $t=t_{1}$. The heat transfer PDE is given by
\begin{eqnarray}
\frac{\partial u}{\partial t} & = & \alpha\Delta u,\text{ in}\quad D=\left(0,1\right)\times\left(0,1\right),\label{eq:2Dheat}\\
u(x,y,t) & = & 0\quad\text{on\ensuremath{\quad\partial D}}\nonumber \\
u(x,y,0) & = & f\left(x,y\right).
\end{eqnarray}
Taking the forcing term $f\left(x,y\right)=b\sin\left(\pi x\right)\sin\left(\pi y\right)+c\sin\left(2\pi x\right)\sin\left(\pi y\right)$
as initial condition, the PDE has an analytical solution 
\[
u(x,y,t)=b\exp\left(-2\alpha\pi^{2}t\right)\sin\left(\pi x\right)\sin\left(\pi y\right)+c\exp\left(-5\alpha\pi^{2}t\right)\sin\left(2\pi x\right)\sin\left(\pi y\right) .
\]

In this example, we consider a more complex 2D PDE inverse problem, the FM is available analytically and a numeric FM is
also used; the numeric error is directly calculated. In this case, only two parameters are needed to be inferred.

A numerical solution of equation \ref{eq:2Dheat}(b) is also computed
using the Finite Element Method (FEM) within FEniCS \citep{AlnaesBlechta2015a},
which allows us to calculate the error in the numerical solver using the exact
solution.

\begin{figure}
\begin{center}
\begin{tabular}{c c c}
\includegraphics[scale=0.25]{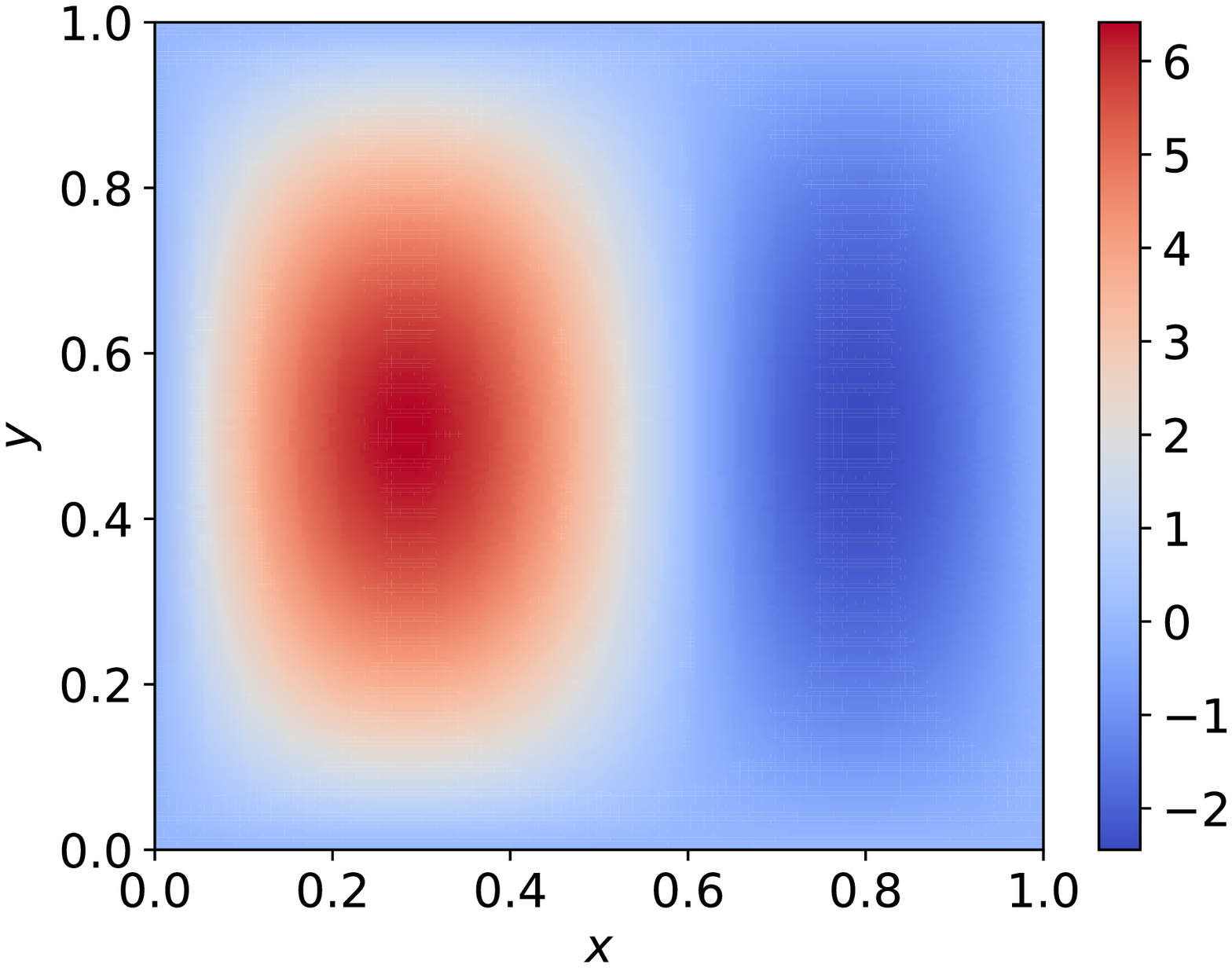} &
\includegraphics[scale=0.25]{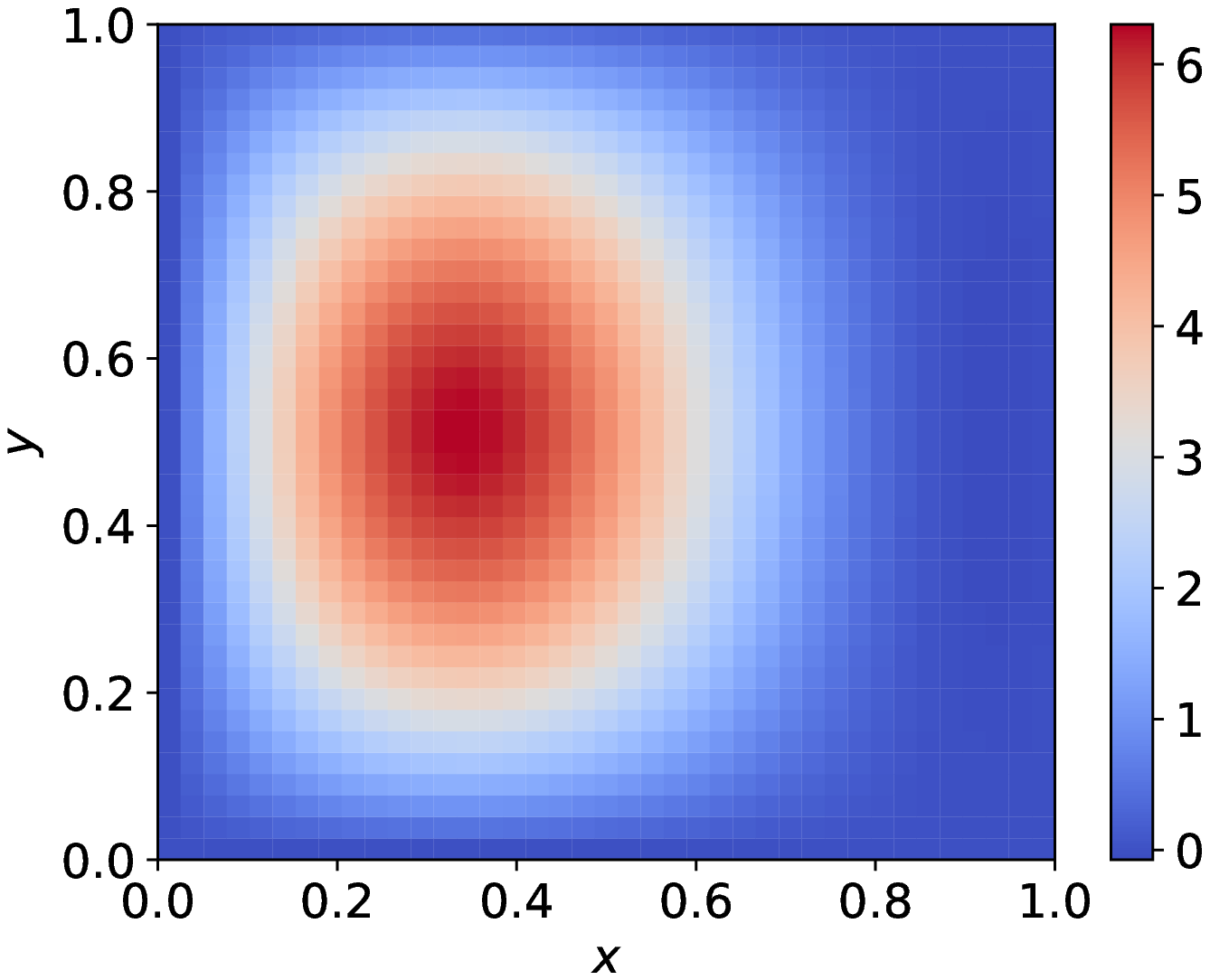} &
\includegraphics[scale=0.25]{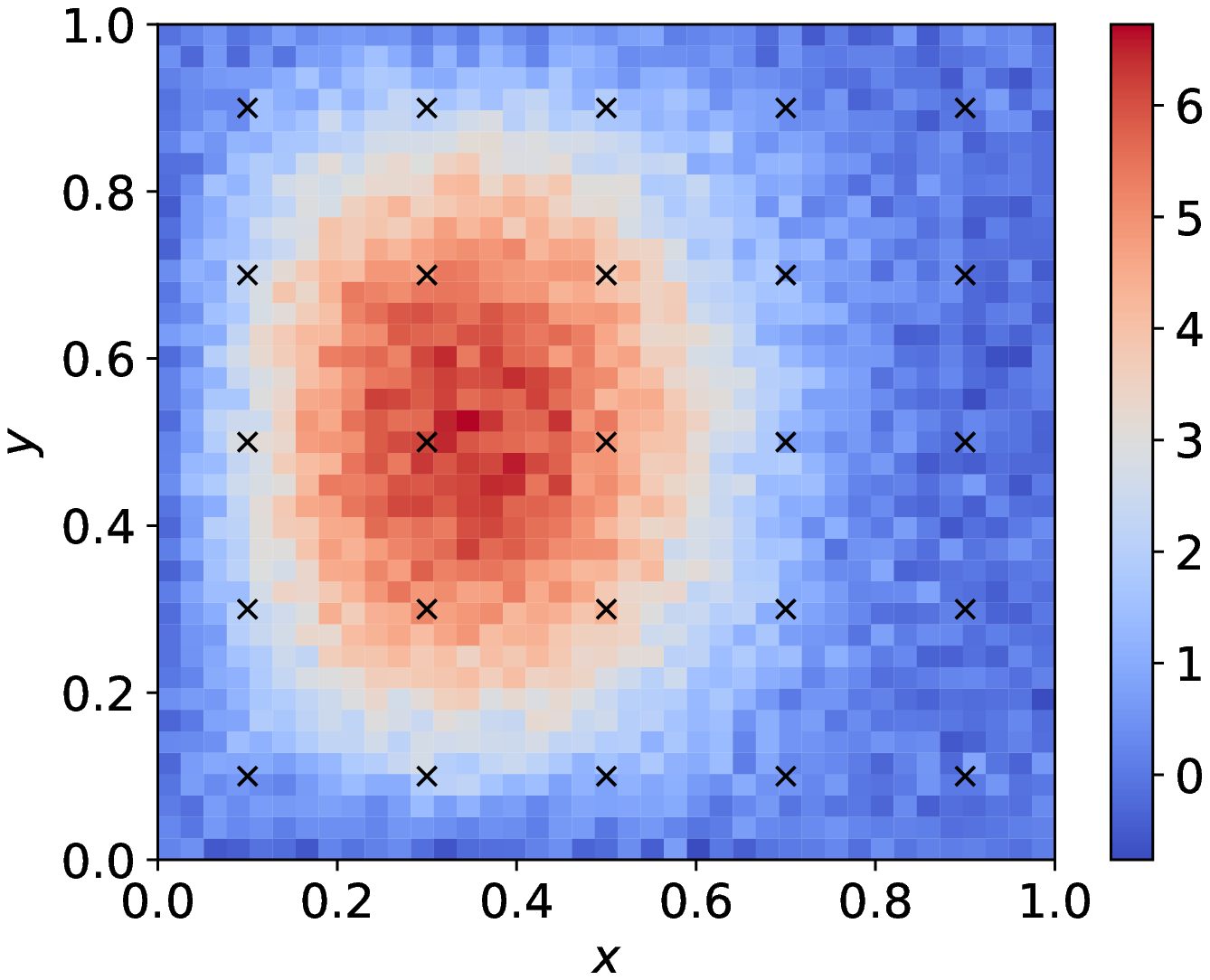} \\
(a) & (b) & (c) \\
\end{tabular}
\end{center}
\caption{Heat equation in 2D, (a) exact solution at $t=t_1$, (b) numerical solution using finite element
method with FEniCS with mesh $40\times40$ with $\Delta t=0.067$ and (c)
numerical solution with an additive noise gaussian with variance $\sigma=0.3$ and data
point locations.
\label{fig:sol_EDP}}
\end{figure}

The inferential problem is to estimate $\theta=\left(b,c\right)$ given
measurements of $u$ at time $t_1=0.3$.  A priori we took independent truncated Gamma distributions
for $b$ and $c$ with parameters $(2,0.7)$ and $(2,0.4)$ respectively, both restricted to $[0,8]$.
Certainly, the theoretical and the numeric FMs are continuos, and since the support is compact
we may conclude that the error bound in (\ref{eqn:global_error1}) exists for all $\theta$.

We simulate a synthetic
data set with the error model 
\[
Y_{i} = u( x_i, y_i,  t_1 ) + \sigma\varepsilon_{i},
\]
where $\varepsilon_{i}\sim N\left(0,1\right)\quad i=1,\ldots,n$,
$\sigma=0.3$ (using a the signal to noise ratio of $5\%$), with $b=3$ and $c=5$.
The data are plotted in Figure \ref{fig:sol_EDP}(b).
We consider $n=25$ observations, $( x_i, y_i ),\quad i,\ldots,n$
regularly spaced on $D$.  Since we have an analytic
solution, if we run the PDE solver we may calculate
the maximum absolute error, $K_{0}$, exactly.
The error bound for the FM as stated in~(\ref{eqn:main_bound})
is $\simeq0.0015$. To sample from
the posterior distribution we use the t-walk \citep{Christen2010}.

Regarding the numerical solver
we start with a large step size of $\Delta x=\Delta y=0.1$ and $\Delta t=0.268$,
and calculate $K_{0}$. If the solution does not comply with the bound,
that is $K_{0}>0.0015$, a new solution is attempted by reducing the
step-size in $\Delta x,\Delta y$ and $\Delta t$ by half, until the
global absolute errors is within the bound, $K_{0}\leqslant0.0015$.
The resulting mesh is $\Delta x=\Delta y=0.025$ and $\Delta t=0.067$

We compare the above FEM numerical FM with the exact FM, with
250,000 iterations of our MCMC.
The result are shown in Figure~\ref{fig:Comp_Th_Num} and in Table~\ref{tab:Comp_Th_Num}.
The differences observed in both results may be attributed to the
Monte Carlo sampling.  

\begin{figure}
\begin{tabular}{c c}
\includegraphics[scale=0.4]{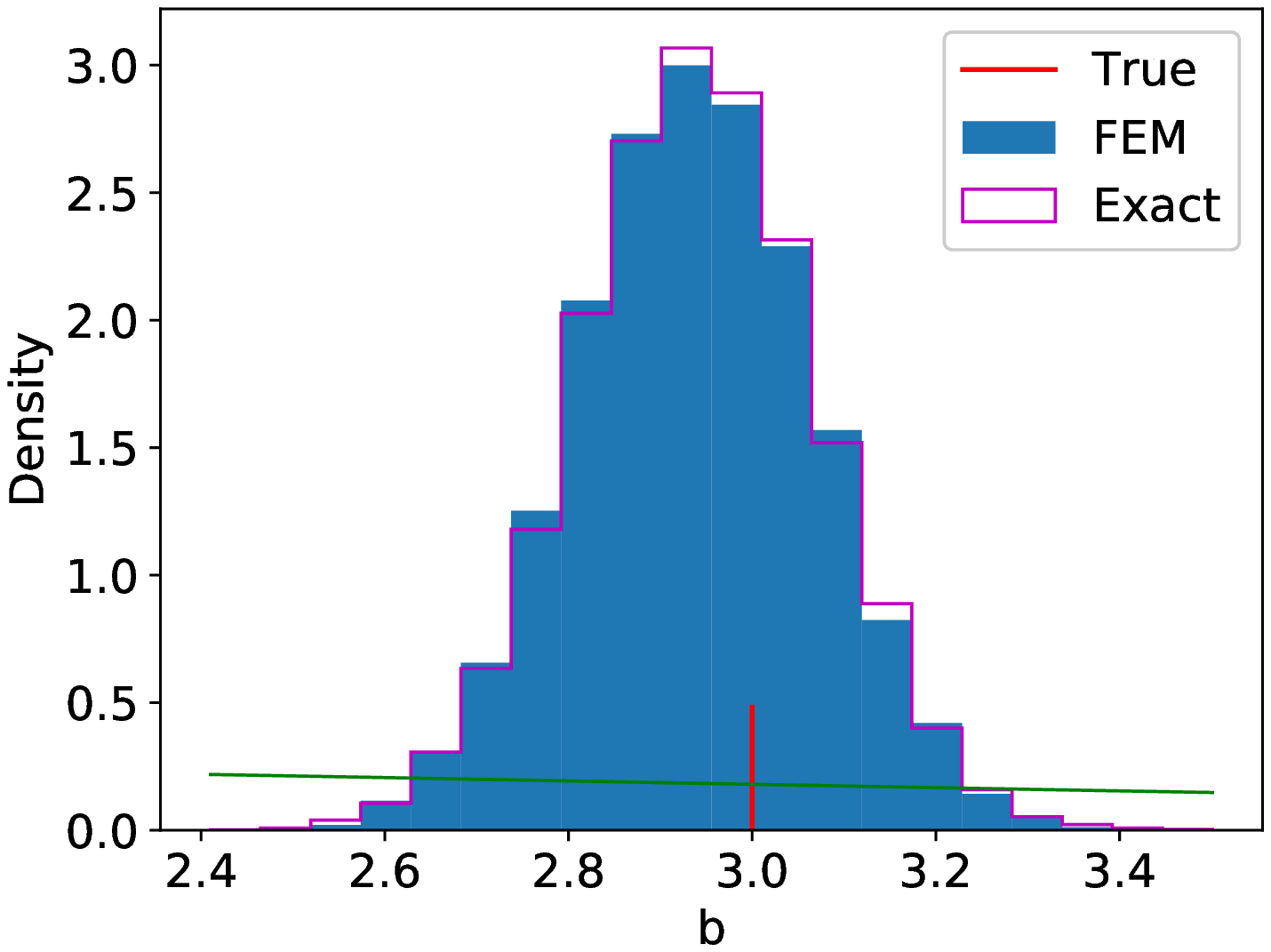} &
\includegraphics[scale=0.4]{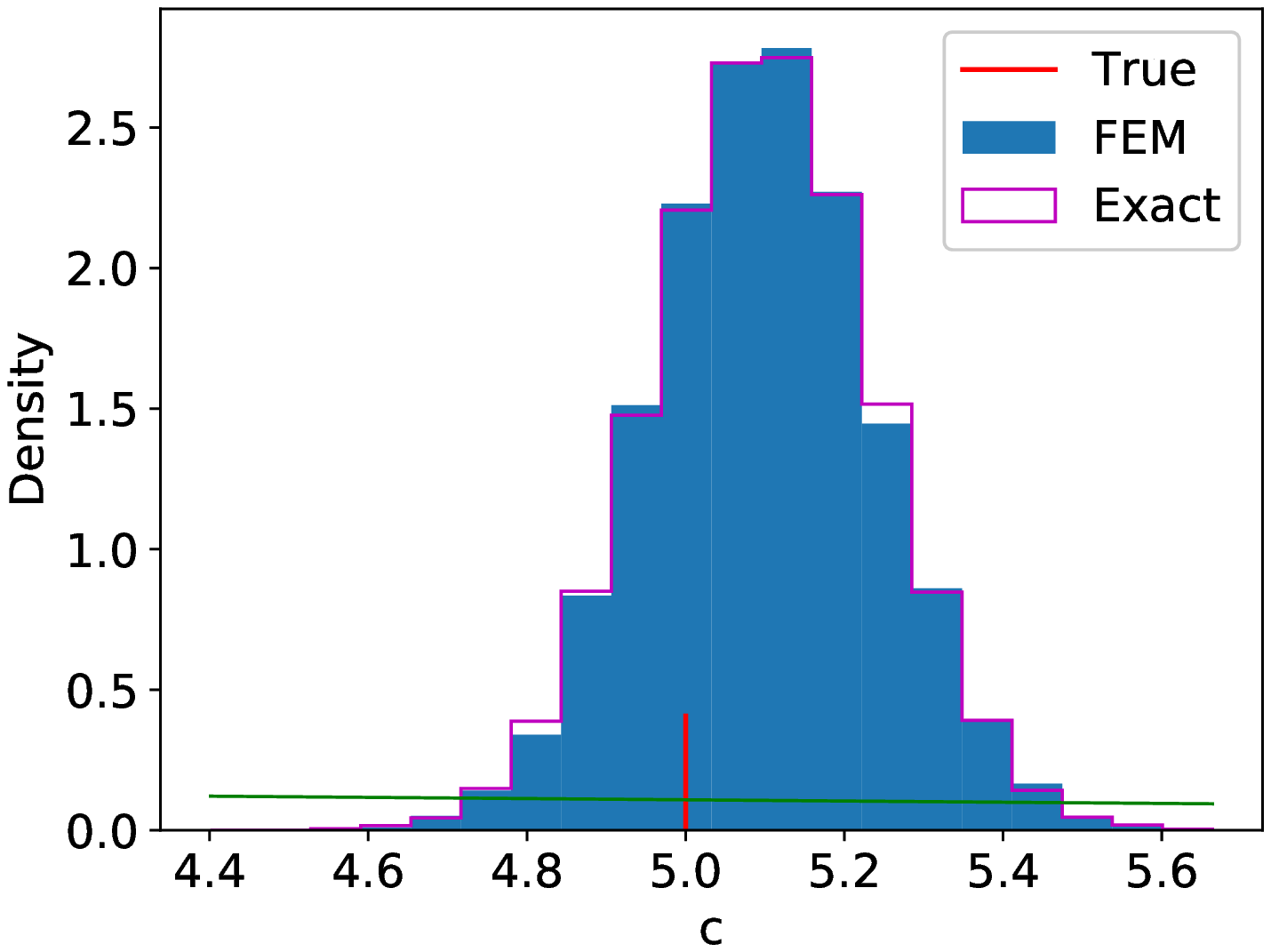} \\
\end{tabular}
\caption{Comparison between numerical (blue) a theoretical (magenta) posteriors for both parameters in the initial conditions of the 2D heat equation.\label{fig:Comp_Th_Num} }
\end{figure}

\begin{table}
\begin{center}
\begin{tabular}{ l  l  l }
\hspace{1cm}  & $b$ \hspace{2cm} &  $c$ \hspace{2cm} \\
\hline \hline
True          & 3.0       & 5.0       \\
PM-Exact \hspace{1cm}  & 2.9396 & 5.0966 \\
PM-FEM   \hspace{1cm}  & 2.9377 & 5.0969 \\
\hline \hline
\end{tabular}
\end{center}
\caption{Comparison of the Posterior
Mean (PM) of parameters $b$ and $c$ using the exact FM and
the FEM approximate FM.\label{tab:Comp_Th_Num}}
\end{table}

\section{Discussion}\label{sec:diss}

The generalization of the results of \cite{Capistran2016} to a priori statements,
Banach parameter spaces and a truncation in the prior makes the error control strategy, ie. using BFs, of the latter far more
feasible, general and applicable.
 
In passing we needed to define the posterior distribution in this general setting and prove its existence,
as presented in section \ref{sec:weak}.   However, this we did
using standard results in probability and modern Bayesian theory.  Regarding the finite dimensional numeric posterior,
weak convergence is then not difficult to prove and also TV rates of convergence are proved to be maintained, as seen in
theorems \ref{teo:tvn_rate}  and \ref{teo:tvk_rate}, this relaying on lemmas \ref{lemm:conv_post1}
and  \ref{lemm:conv_post2}.

We have not discussed the scenario when error parameters $\sigma$ are not known.  In this case
we may consider that a priori $\theta$ and $\sigma$ are independent and
equivalent results should follow; this was discussed in a previous version of this manuscript but not here \citep{Christen2017}.  We only need to prove that the likelihood including $\sigma$ follows assumption \ref{assp:1}, in particular that it is bounded $\lambda$-a.s.

We have not proved that stylized posterior estimates like the mean or variance exists for $\Q_y[\pi]$.
Elsewhere, these are proven to exists with additional requirements and for Gaussian priors, or with exponential tails, using Fernique's theorem \citep{Stuart2010, Hosseini2017}.
In our case, an additional sufficient requirement is mentioned in (\ref{eqn:uni_int}), which only involves the finite dimensional measures $\Q_y^n[\pi_k]$,
which can be examined in a case-by-case basis.  Note, however,
that as far as Bayesian inference is concerned, we need not to guarantee the existence of the posterior expected mean, variance etc.
adding regularity conditions on the observational model and/or on the prior.
If, for example, a posterior distribution has no variance, that is a very relevant and important information regarding the
statistical inference problem at hand.  Nonetheless,
all posterior probabilities and posterior expected utilities are proven to be
consistent and well defined, given weak convergence and TV convergence rates.

We presented 4 workout examples of increasing difficulty. In all cases, the numerical error
in the posterior was controlled successfully leading to negible increase in precision
if a more precise FM is considered.  This in turn may result in CPU time save, as cheaper/rougher
solvers are used.  Note that decreasing solver precision can only be done within limits,
that is within the stable regime of the solver used.  Moreover, in real case applications,
increasing the mesh size or any mesh refinements come a great coding effort, for
example in a large scale 3D geothermal inversion \citep{CUIetAl2011}.
Our approach only makes sense in the case where mesh refinements and
reliable after the fact error estimates are readily available.

\section{Acknowledgments}

We thank Tan Bui-Thanh (UT Austin) form prompting us to work on this generalization and for several comments on a previous draft of the paper.  Also to Peter M\"uller (UT Austin), Jos\'e Luis Perez Garmidia,
and Fernanda M\'endez (CIMAT) for invaluable comments during the many previous drafts of the paper.  This research is partially founded by CONACYT CB-2016-01-284451,
RDECOMM and ONRG grants.

\bibliographystyle{chicago}
\bibliography{PostErrControl_biblio}

\appendix

\section{Auxiliary lemmas}

\begin{lemma}\label{lemm:conv_post1}
Let $b_n, b: \Theta \rightarrow \R^+$ be bounded, $\pi$-integrable functions and let
$z_n = \int b_n(\theta) \pi(d\theta)$ and $z = \int b(\theta) \pi(d\theta)$ and assume that
$| b_n(\theta) - b(\theta) | < k n^{-p}$ for all $\theta \in \Theta, n > N$; $k>0, p>1$ fixed.  Then
$z_n \rightarrow z$ and $\frac{b_n(\theta)}{z_n} \rightarrow \frac{b(\theta)}{z}$ with convergence rates
$$
|z_n - z| < k n^{-p} ~~\text{and}~~
\left| \frac{b_n(\theta)}{z_n} - \frac{b(\theta)}{z} \right| < \frac{b(\theta)}{z} \frac{k}{z}  n^{-p} + \frac{k}{z} n^{-p},
$$
for all $\theta \in \Theta$ and big enough $n$. 
\end{lemma}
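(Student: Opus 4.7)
The two bounds decouple cleanly, so I will handle them in sequence, using only the uniform estimate $|b_n(\theta)-b(\theta)|<kn^{-p}$ and the fact that $\pi$ is a probability measure.

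For the first bound $|z_n-z|<kn^{-p}$, the idea is to push the absolute value inside the integral. Since $\pi(\Theta)=1$ and the pointwise bound on $b_n-b$ is uniform in $\theta$,
\[
|z_n-z|=\left|\int (b_n(\theta)-b(\theta))\,\pi(d\theta)\right|\le\int |b_n(\theta)-b(\theta)|\,\pi(d\theta)<kn^{-p},
\]
which in particular gives $z_n\to z$ (useful for the second part).

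For the second bound, the plan is to use the standard quotient identity
\[
\frac{b_n(\theta)}{z_n}-\frac{b(\theta)}{z}=\frac{b_n(\theta)-b(\theta)}{z_n}+b(\theta)\left(\frac{1}{z_n}-\frac{1}{z}\right)=\frac{b_n(\theta)-b(\theta)}{z_n}+\frac{b(\theta)(z-z_n)}{z_n z}.
\]
Applying the triangle inequality together with $|b_n-b|<kn^{-p}$ and the first bound $|z-z_n|<kn^{-p}$ yields
\[
\left|\frac{b_n(\theta)}{z_n}-\frac{b(\theta)}{z}\right|<\frac{kn^{-p}}{z_n}+\frac{b(\theta)}{z_n}\cdot\frac{k n^{-p}}{z}.
\]
To convert this to the stated form I replace $z_n$ by $z$ in the denominators, which is where the qualifier ``for big enough $n$'' comes in: since $z>0$ and $z_n\to z$, for every $\varepsilon>0$ there exists $N$ such that $n>N$ implies $z_n>z/(1+\varepsilon)$, so $z_n^{-1}<(1+\varepsilon)z^{-1}$. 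Because the two target terms have the explicit factor $n^{-p}$ which decays, for sufficiently large $n$ the multiplicative slack $(1+\varepsilon)$ can be absorbed while keeping a strict inequality against the stated right-hand side $\tfrac{b(\theta)}{z}\cdot\tfrac{k}{z}n^{-p}+\tfrac{k}{z}n^{-p}$.

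The only subtle step is this last replacement of $z_n$ by $z$ in the denominators, since one could have $z_n<z$; the rest of the proof is just the triangle inequality applied to an algebraic identity. This difficulty is purely analytic and is the whole reason the conclusion is stated for ``big enough $n$'' rather than for every $n>N$ uniformly. Taken together, these steps give both asserted bounds.
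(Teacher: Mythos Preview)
Your first bound and the quotient identity are exactly right, and this route is cleaner than the paper's. The paper instead Taylor-expands $x^{-1}$ around $x_0=z$ to argue that $|z_n^{-1}-z^{-1}|/z^{-1}\lesssim |z_n-z|/z$, obtains the two-sided bounds $z^{-1}-kz^{-2}n^{-p}<z_n^{-1}<z^{-1}+kz^{-2}n^{-p}$ and $b-kn^{-p}<b_n<b+kn^{-p}$, multiplies them, and then explicitly drops the cross terms of order $n^{-2p}$. Your algebraic decomposition avoids the Taylor step and the two-sided multiplication entirely.

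One point deserves tightening. Your last step claims the multiplicative slack $(1+\varepsilon)$ coming from $z_n^{-1}<(1+\varepsilon)z^{-1}$ can be ``absorbed'' because $n^{-p}$ decays. That reasoning does not work as stated: both your upper bound and the target carry the same factor $n^{-p}$, so their ratio remains $(1+\varepsilon)$ regardless of $n$. What actually happens is sharper: from $|z_n-z|<kn^{-p}$ you get $z_n^{-1}<z^{-1}\bigl(1-kz^{-1}n^{-p}\bigr)^{-1}=z^{-1}+kz^{-2}n^{-p}+O(n^{-2p})$, so replacing $z_n$ by $z$ in your two denominators costs only an additive $O(n^{-2p})$ term, not a fixed multiplicative factor. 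That gives the stated bound up to an $O(n^{-2p})$ correction---which is exactly what the paper's proof delivers as well, since it too discards the $n^{-2p}$ cross terms. In the paper the phrase ``for big enough $n$'' is effectively covering this same higher-order remainder; read that way, your argument is complete and more transparent than the original.
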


\begin{proof}
Since $b_n(\theta) \leq l(\theta)=M$ then by dominated convergence $z_n \rightarrow z$
(since $M = \int l(\theta) \pi$).  Since $z_n, z >0$, $b_n \rightarrow b$ already implies
$b_n/z_n \rightarrow b/z$.

Now, for the rate of convergence we have
$$
|z_n - z| = \left| \int b_n(\theta) \pi(d\theta) - \int b(\theta) \pi(d\theta) \right| <
\int \left| b_n(\theta) - b(\theta)  \right| \pi(d\theta) < k n^{-p} ,
$$
since $\int \pi(d\theta) = 1$, and therefore $\frac{| z_n - z|}{z} < \frac{k}{z} n^{-p}$. 

Note that the first order Taylor series with residual of $x^{-1}$ around $x_0$ is
$x^{-1} = x_0^{-1} - x_0^{-2}(x - x_0) + x_1^{-3} (x - x_0)^2$, for $x_1$ between $x$ and $x_0$.
Assuming $x,x_0> 0$ then 
$$
\frac{|x^{-1} - x_0^{-1}|}{x_0^{-1}} \leq \frac{|x - x_0|}{x_0} +  \left( \frac{x_0}{x_1} \right)^{3} \left( \frac{x - x_0}{x_0} \right)^2 .
$$
Let $\frac{|x - x_0|}{x_0} < \epsilon$, then also $\frac{|x - x_0|}{x_0} < \epsilon$ and
$(1- \epsilon) < \frac{x_1}{x_0} < ( 1 + \epsilon)$.  Since $x^{-3}$ is decreasing then
$( 1 + \epsilon)^{-3} < \left( \frac{x_0}{x_1} \right)^{3} < (1- \epsilon)^{-3}$.  Therefore
$\left( \frac{x_0}{x_1} \right)^{3} \left( \frac{x - x_0}{x_0} \right)^2 <  (1- \epsilon)^{-3} \epsilon^2$.
If the relative error $\epsilon$ (of estimating $x_0$ with $x$) is below 20\%, $(1- \epsilon)^{-3} \epsilon^2$
is already one order of magnitud smaller than $\epsilon$.
Then ignoring this last term
$$
\frac{|x^{-1} - x_0^{-1}|}{x_0^{-1}} \lesssim \frac{|x - x_0|}{x_0} < \epsilon .
$$

Assume $n$ is big enough such that the relative error $\frac{| z_n - z|}{z} < \frac{k}{z} n^{-p}$ is
small enough and we have $\frac{| z_n^{-1} - z^{-1} |}{z^{-1}}  \lesssim \frac{| z_n - z|}{z} < \frac{k}{z} n^{-p}$.
Therefore
\begin{equation}\label{eqn:zbound}
z^{-1} - \frac{k}{z^2} n^{-p} < z_n^{-1} < z^{-1} + \frac{k}{z^2} n^{-p}.
\end{equation}
Since $b(\theta) - k n^{-p} < b_n(\theta) < b(\theta) + k n^{-p}$, and given that
we may assume $0 < z^{-1}(1 - \frac{k}{z} n^{-p})$, then multiplying (\ref{eqn:zbound}) with the
fomer term we have
$$
(b(\theta) - k n^{-p}) (z^{-1} - z^{-2} k n^{-p}) < \frac{b_n(\theta)}{z_n} < (z^{-1} + z^{-2} k n^{-p}) (b(\theta) + k n^{-p}) ,
$$
\begin{eqnarray*}
& & b(\theta)z^{-1} - b(\theta) z^{-2} k n^{-p} - k n^{-p}z^{-1} + z^{-2} k^2 n^{-2p} < \frac{b_n(\theta)}{z_n} < \\
& & b(\theta)z^{-1} + b(\theta) z^{-2} k n^{-p} + k n^{-p}z^{-1} + z^{-2} k^2 n^{-2p} .
\end{eqnarray*}
Ignoring the two terms of $2p$ order, we obtain the result.
\end{proof}

\begin{lemma}\label{lemm:conv_post2}
With the setting of lemma \ref{lemm:conv_post1}, let $h(\theta)$ measurable and
$\hat{h}_n = \int h(\theta) \frac{b_n(\theta)}{z_n} \pi(d\theta)$ and
$\hat{h} = \int h(\theta) \frac{b(\theta)}{z} \pi(d\theta)$ exists.  Then
$$
| \hat{h}_n - \hat{h} | < E_1[| h |] \frac{k}{z} n^{-p} +   E_0[| h |] \frac{k}{ z} n^{-p}
$$
where $E_1[| h |] = \int | h(\theta) | \frac{b(\theta)}{z} \pi(d\theta)$ and
$E_0[| h |] = \int | h(\theta) | \pi(d\theta)$.  Moreover, for all $h$
non-negative and bounded, $\int h(\theta) \frac{b_n(\theta)}{z_n} \pi(d\theta)$ and
$\int h(\theta) \frac{b(\theta)}{z} \pi(d\theta)$ implicitly define the probability measures
$p_n$ and $p$, then
$$
||p_n - p||_{TV} < \frac{k}{z} n^{-p} .
$$
\end{lemma}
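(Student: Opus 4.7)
The plan is to derive both statements as direct consequences of the pointwise density bound already established in Lemma \ref{lemm:conv_post1}, together with standard manipulations.

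For the first inequality, I would start from
\[
|\hat{h}_n - \hat{h}| = \left| \int h(\theta) \left( \frac{b_n(\theta)}{z_n} - \frac{b(\theta)}{z} \right) \pi(d\theta) \right| \leq \int |h(\theta)| \left| \frac{b_n(\theta)}{z_n} - \frac{b(\theta)}{z} \right| \pi(d\theta),
\]
and then substitute the pointwise bound
\[
\left| \frac{b_n(\theta)}{z_n} - \frac{b(\theta)}{z} \right| < \frac{b(\theta)}{z} \frac{k}{z} n^{-p} + \frac{k}{z} n^{-p}
\]
supplied by Lemma \ref{lemm:conv_post1}. Splitting the resulting integral into two pieces and recognising $E_1[|h|] = \int |h(\theta)| \frac{b(\theta)}{z}\pi(d\theta)$ and $E_0[|h|] = \int |h(\theta)|\pi(d\theta)$ immediately yields
\[
|\hat{h}_n - \hat{h}| < E_1[|h|]\frac{k}{z} n^{-p} + E_0[|h|] \frac{k}{z} n^{-p}.
\]

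For the second statement, I would use the variational characterisation of total variation already recalled in the introduction,
\[
\|p_n - p\|_{TV} = \frac{1}{2} \max_{|h|\leq 1} \left| \int h \, dp_n - \int h \, dp \right|.
\]
Since $p_n$ and $p$ are precisely the probability measures with $\pi$-densities $b_n/z_n$ and $b/z$, the inner expression equals $|\hat h_n - \hat h|$ for that particular $h$. For any measurable $h$ with $|h| \leq 1$, one has $E_1[|h|] \leq 1$ and $E_0[|h|] \leq 1$, so the bound from the first part gives $|\hat{h}_n - \hat{h}| < 2\frac{k}{z} n^{-p}$. Multiplying by the $\tfrac{1}{2}$ from the TV representation produces the claimed $\frac{k}{z} n^{-p}$.

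The derivation is essentially bookkeeping; no technical obstacle arises since all the analytic work is absorbed into Lemma \ref{lemm:conv_post1}. The only thing to be careful about is remembering the factor of $\tfrac{1}{2}$ in the definition of TV (otherwise one loses a factor of 2 and the stated constant does not match) and verifying that the two measures $p_n, p$ are genuine probability measures, which follows because $b_n/z_n$ and $b/z$ are non-negative $\pi$-densities integrating to one by construction.
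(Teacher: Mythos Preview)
Your proposal is correct and follows essentially the same route as the paper: the first bound comes from the triangle inequality together with the pointwise density estimate of Lemma~\ref{lemm:conv_post1}, and the TV bound then follows by restricting to $|h|\le 1$, using $E_0[|h|],E_1[|h|]\le 1$, and absorbing the factor $\tfrac12$ from the variational definition of total variation.
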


\begin{proof}
We have
$$
| \hat{h}_n - \hat{h} | \leq \int |h(\theta)|  \left| \frac{b_n(\theta)}{z_n} - \frac{b(\theta)}{z} \right| \pi(d\theta) 
$$
and using lemma \ref{lemm:conv_post1} we obtain the first result.  Moreover, if $|h| \leq 1$
then $\hat{h} \frac{k}{z} n^{-p} + \hat{h}_0 \frac{k}{z} n^{-p} <  2\frac{k}{z} n^{-p}$
and therefore
$$
\frac{1}{2} \max_{|h| \leq 1} \left| \int h(\theta) p_n(d\theta) -  \int h(\theta) p(d\theta) \right| <
\frac{1}{z} k n^{-p} 
$$
and we obtain the second result.
\end{proof}

\end{document}